\numberwithin{equation}{section}
\newtheorem{theo}{Theorem}[section]
\newtheorem{lma}[theo]{Lemma}
\newtheorem{cor}[theo]{Corollary}
\newtheorem{defn}[theo]{Definition}
\newtheorem{prop}[theo]{Proposition}
\newtheorem{exam}[theo]{Example}
\DeclareMathOperator{\R}{\mathbb{R}}
\DeclareMathOperator{\N}{\mathbb{N}}
\DeclareMathOperator{\E}{\mathbb{E}}
\DeclareMathOperator{\Haus}{\mathcal{H}}
\DeclareMathOperator{\Prob}{\mathbb{P}}
\DeclareMathOperator{\geo}{geo}
\DeclareMathOperator{\interior}{int}
\newcommand{\vertiii}[1]{{\left\vert\kern-0.25ex\left\vert\kern-0.25ex\left\vert #1 
    \right\vert\kern-0.25ex\right\vert\kern-0.25ex\right\vert}}
\renewcommand{\epsilon}{\varepsilon}
\title[Dimensions of random self-similar graph directed attractors]{On the dimensions of attractors of random self-similar graph directed iterated function systems}
\author{Sascha Troscheit}
\thanks{The author was financially supported by EPSRC Doctoral Training Grant EP/K503162/1. The author thanks Nikita Sidorov for pointing out a potential link between Assouad dimension and the joint spectral radius, Kenneth Falconer and Mike Todd for many helpful discussions}
\address{Department of Pure Mathematics\\University of Waterloo\\200 University Ave W\\Waterloo\\ON\\N2L 3G1\\Canada}
\email{stroscheit@uwaterloo.ca}
\begin{document}
\begin{abstract}
In this paper we propose a new model of random graph directed fractals that encompasses the current well-known model of random graph directed iterated function systems, $V$-variable attractors, and fractal and Mandelbrot percolation. We study its dimensional properties for similarities with and without overlaps. In particular we show that for the two classes of $1$-variable and $\infty$-variable random graph directed attractors we introduce, the Hausdorff and upper box counting dimension coincide almost surely, irrespective of overlap. Under the additional assumption of the uniform strong separation condition we give an expression for the almost sure Hausdorff  and Assouad dimension.
\end{abstract}

\maketitle

\section{Introduction}\label{introSect}
The study of deterministic and random fractal geometry has seen a lot of interest over the past 30 years. 
While we assume the reader is familiar with standard works on the subject (e.g.~\cite{superfractals}, \cite{FractalGeo}, \cite{TecFracGeo}) we repeat some of the material here for completeness, enabling us to set the scene for how our model fits in with and also differs from previously considered models.

In the study of strange attractors in dynamical systems and in fractal geometry, one of the most commonly encountered families of attractors is the invariant set under a finite family of contractions.
This is the family of \emph{Iterated Function System} (IFS) attractors.
An IFS is a set of mappings $\mathbb I=\{f_i\}_{i\in\mathcal I}$, with associated attractor $F$ that satisfies
\begin{equation}\label{IFSInvariance}
F=\bigcup_{i\in\mathcal I}f_i (F).
\end{equation}
If $\mathcal I$ is a finite index set and each $f_i:\R^d\to\R^d$ is a contraction, then there exists a unique compact and non-empty set $F$ in the family of compact subsets $\mathcal K(\R^d)$ that satisfies this invariance (see Hutchinson~\cite{Hutchinson81}).
These assumptions are however still insufficient to give concrete and meaningful dimensional results for IFS attractors and further assumptions on these maps are stipulated. 
If one considers only similitudes, i.e.\ $\lvert f(y)-f(x)\rvert=c_i\lvert y-x\rvert$, where $c_i\in(0,1)$ is the Lipschitz constant (contraction rate) of $f_{i}$, the attractors are called \emph{self-similar sets}. 
Of particular interest are the dimensional properties of these attractors, with the Hausdorff, packing, and upper and lower box counting dimension being the main candidates for investigation. 
Here we also consider the Assouad dimension, a dimension that was first developed by Assouad~\cite{assouadphd}, \cite{Assouad79} to study embedding problems which has recently gained more traction as a tool to investigate deterministic fractals (see for example \cite{Fraser15a}, \cite{Fraser14a}, \cite{Mackay11}, \cite{Olsen11a} and the references therein). 
One interesting result to note is that for self-similar sets we always have Hausdorff dimension equal to the upper box counting dimension, and therefore the Hausdorff, packing and box counting dimensions coincide (see Falconer~\cite{Falconer89}). 
If one assumes further that the attractors have minimal overlap, that is they satisfy the Open Set Condition, the Hausdorff and box counting dimension coincide with the Assouad and the similarity dimension. The similarity dimension is the unique $s\in\R^+_0$ satisfying the Hutchinson-Moran formula
\begin{equation}\label{HutchinsonMoranFormula}
\sum_{i\in\mathcal I}c_i^s=1,
\end{equation}
(see~\cite{Hutchinson81}, \cite{Moran46}).
 In fact the OSC is not the weakest condition that implies coincidence of Hausdorff and Assouad dimension. 
 The appropriate separation condition here is the \emph{Weak Separation Property} (WSP) (see Fraser et al.~\cite{Fraser15a}).

Graph directed systems are a natural extension of the Iterated Function System (IFS) construction that simultaneously describes a finite collection of sets.
Given a directed multi-graph $\Gamma=(V,E)$ with finitely many vertices $V$ and edges $E$ we consider the collection of sets $\{K_i\}_{i\in V}$.
Let $\tensor*[_v]{E}{_w}$ be the set of edges from $v$ to $w$, we associate a mapping with every edge and the sets $K_{i}$ are described by an invariance similar to (\ref{IFSInvariance}):
\[
K_i = \bigcup_{j\in V}\bigcup_{e\in {}_i E_j}f_e(K_j)\;\;\text{ for all }i\in V.
\]
Assuming the maps $f_{e}$ are contractions, the sets $K_{v}$ are compact and uniquely determined by the graph directed iterated function system.
Note that IFS constructions are also graph directed constructions as these can be modelled by a graph with a single vertex and an edge for every map in the IFS. 
It can also be shown that there exist graph directed attractors that cannot be the attractors of standard IFS, see Boore and Falconer~\cite{Boore13}.
If one further assumes that $\Gamma$ is strongly connected, the Hausdorff, packing, and box-counting dimensions coincide for every attractor $K_i$ and further that all of these notions of dimension coincide.

All of these models have random analogues, which for standard IFS are either the $V$-variable or the $\infty$-variable construction for $V\in\N$. 
Here we will not state the definition of $V$-variable attractors for $1<V<\infty$ and we refer the reader to the seminal papers by Barnsley, Hutchinson and Stenflo~\cite{Barnsley05}, \cite{Barnsley08}, \cite{Barnsley12}.

To explain the construction of a \emph{Random Iterated Function System} (RIFS) one first has to note that the invariant set in (\ref{IFSInvariance}) can also be obtained by iteration of the maps of the IFS. 
Consider the IFS $\mathbb I$ as a self-map on compact subsets of $\mathbb R^{d}$, $\mathbb I:\mathcal K(\R^{d})\to\mathcal K(\R^{d})$, with $X\mapsto \bigcup_{i\in\mathcal I}f_{i}(X)$. 
Take a sufficiently large set $\Delta\in \mathcal K(\R^{d})$, such that $F\subseteq \Delta$, then $F$ can be written as 
\[
F=\lim_{N\to\infty}\bigcap_{k=1}^{N}\mathbb I^{(k)}(\Delta).
\]
For the random analogues of this construction we take a finite collection of Iterated Functions Systems $\mathbb L=\{\mathbb I_{i}\}_{i\in\mathcal L}$ with (finite) index set $\mathcal L$. 
We take a probability vector $\vec\pi=\{\pi_{i}\}_{i\in\mathcal L}$ and consider two random constructions; the 1-variable  \emph{Random Iterated Function System} (RIFS) and the $\infty$-variable Random Iterated Function System.

A $1$-variable RIFS is the limit set one obtains by choosing the IFS that is applied at the $k$-th stage according to probability vector ${\vec\pi}$. This choice of IFS is uniform for that level and the attractor can be written as 
\[
F(\omega)=\lim_{N\to\infty}\bigcap_{k=1}^{N}\mathbb I_{\omega_{1}}\circ\mathbb I_{\omega_{2}}\circ\dots\circ\mathbb I_{\omega_{N}}(\Delta)
\]
with $\omega=(\omega_{1}, \omega_{2},\dots)$, $\omega_{i}\in\mathcal L$ being the infinite sequence chosen according to $\vec\pi$.

The description for $\infty$-variable RIFSs (sometimes called random recursive constructions) differs in the non-uniform application of the same IFS at every level. 
In general they differ substantially, with $V$-variable fractals interpolating between the two. 
We avoid giving a detailed mathematical description at this stage and only comment that an $\infty$-variable attractor is constructed in a recursive manner by assigning a randomly chosen IFS to every finite word that was already constructed, independent of other words and the level of construction, as opposed to a single chosen IFS for every word in the same level of construction. 
This means that every finite word on every level has an independent, but identical in distribution, sequence of IFS maps applied to it.

\begin{figure}[htb]
\begin{tikzpicture}[x=0.85cm,y=0.9cm]

\draw[thick] (0,0)--(14,0);

\draw[thick] (0,-1)--(3.5,-1);
\draw[thick] (5.25,-1)--(8.75,-1);
\draw[thick] (10.5,-1)--(14,-1);

\draw[thick] (0,-2)--(1.167,-2);
\draw[thick] (2.333,-2)--(3.5,-2);
\draw[thick] (5.25,-2)--(6.417,-2);
\draw[thick] (7.583,-2)--(8.75,-2);
\draw[thick] (10.5,-2)--(11.67,-2);
\draw[thick] (12.83,-2)--(14,-2);

\draw[thick] ( 0 ,-3.5)--(0.389,-3.5);
\draw[thick] (0.778  ,-3.5)--( 1.167 ,-3.5);
\draw[thick] (2.333  ,-3.5)--( 2.722 ,-3.5);
\draw[thick] ( 3.111 ,-3.5)--( 3.5 ,-3.5);
\draw[thick] ( 5.25 ,-3.5)--( 5.639 ,-3.5);
\draw[thick] (6.028,-3.5)--( 6.417 ,-3.5);
\draw[thick] (7.5833,-3.5)--( 7.9722 ,-3.5);
\draw[thick] (8.361,-3.5)--( 8.75 ,-3.5);
\draw[thick] (10.5,-3.5)--( 10.889 ,-3.5);
\draw[thick] (11.278,-3.5)--( 11.667 ,-3.5);
\draw[thick] (12.833,-3.5)--( 13.22 ,-3.5);
\draw[thick] (13.611,-3.5)--( 14 ,-3.5);

\node[] at (7,-0.3) {$\epsilon_{0}$};

\node[] at (1.75,-1.3) {$a_{1}$};
\node[] at (7,-1.3) {$a_{2}$};
\node[] at (12.25,-1.3) {$a_{3}$};

\node[] at (.5833,-2.3) {$a_{1}A_{1}$};
\node[] at (2.917,-2.3) {$a_{1}A_{2}$};
\node[] at (5.833,-2.3) {$a_{2}A_{1}$};
\node[] at (8.167,-2.3) {$a_{2}A_{2}$};
\node[] at (11.083,-2.3) {$a_{3}A_{1}$};
\node[] at (13.413,-2.3) {$a_{3}A_{2}$};

\node[] at (14, -0.0) (lvl1) {};
\node[] at (14, -1.) (lvl2) {};
\node[] at (14, -2.) (lvl3) {};
\node[] at (14, -3.5) (lvl4) {};

\path[->, thick, dashed]
(lvl1) edge [bend left] node [right] {$\mathbb I_{2}$}(lvl2)
(lvl2) edge [bend left] node [right] {$\mathbb I_{1}$}(lvl3)
(lvl3) edge [bend left] node [right] {$\mathbb I_{1}$}(lvl4);

\node[] at (7,-2.7) {$\vdots$};
\node[] at (7,-3.8) {$\vdots$};

\end{tikzpicture}
\caption{Generation of a $1$-variable Cantor set by the Iterated Function Systems $\mathbb I_{1}$ and $\mathbb I_{2}$. For each level the IFS is independently chosen and applied uniformly to all codings at that level.}
\label{1vsInfiniteVar1}
\end{figure}
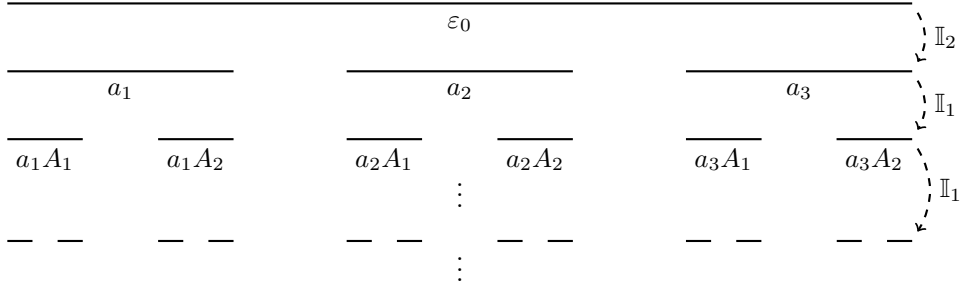

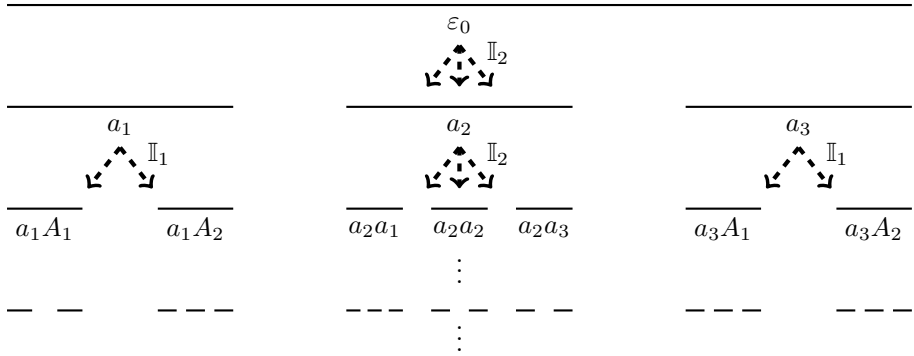
\begin{figure}[htpb]
\begin{tikzpicture}[x=0.85cm,y=0.9cm]
\draw[thick] (0,0)--(14,0);

\draw[thick] (0,-1.5)--(3.5,-1.5);
\draw[thick] (5.25,-1.5)--(8.75,-1.5);
\draw[thick] (10.5,-1.5)--(14,-1.5);

\draw[thick] (0,-3)--(1.167,-3);%
\draw[thick] (2.333,-3)--(3.5,-3);%
\draw[thick] (5.25,-3)--(6.125,-3);%
\draw[thick] (6.5625,-3)--(7.4375,-3);%
\draw[thick] (7.875,-3)--(8.75,-3);%
\draw[thick] (10.5,-3)--(11.667,-3);%
\draw[thick] (12.833,-3)--(14,-3);

\draw[thick] ( 0 ,-4.5)--(0.389,-4.5);
\draw[thick] (0.778  ,-4.5)--( 1.167 ,-4.5);
\draw[thick] (2.333,-4.5)--(2.62475,-4.5);
\draw[thick] (2.7706,-4.5)--(3.06238,-4.5);
\draw[thick] (3.2083,-4.5)--(3.5,-4.5);
\draw[thick] (5.25,-4.5)--(5.46875,-4.5);
\draw[thick] (5.57813,-4.5)--(5.79688,-4.5);
\draw[thick] (5.90625,-4.5)--(6.125,-4.5);
\draw[thick] (6.5625,-4.5)--(6.85417,-4.5);
\draw[thick] (7.14583,-4.5)--(7.4375,-4.5);
\draw[thick] (7.875,-4.5)--(8.16667,-4.5);
\draw[thick] (8.45833,-4.5)--(8.75,-4.5);
\draw[thick] (10.5,-4.5)--(10.7918,-4.5);
\draw[thick] (10.9376,-4.5)--(11.2294,-4.5);
\draw[thick] (11.3753,-4.5)--( 11.667,-4.5);
\draw[thick] (12.833,-4.5)--(13.1248,-4.5);
\draw[thick] (13.2706,-4.5)--(13.5624,-4.5);
\draw[thick] (13.7083,-4.5)--(14,-4.5);

\node[] at (7,-0.3) {$\epsilon_{0}$};

\node[] at (1.75,-1.8) {$a_{1}$};
\node[] at (7,-1.8) {$a_{2}$};
\node[] at (12.25,-1.8) {$a_{3}$};

\node[] at (.5833,-3.3) {$a_{1}A_{1}$};
\node[] at (2.917,-3.3) {$a_{1}A_{2}$};
\node[] at (5.6875,-3.3) {$a_{2}a_{1}$};
\node[] at (7,-3.3) {$a_{2}a_{2}$};
\node[] at (8.3125,-3.3) {$a_{2}a_{3}$};
\node[] at (11.083,-3.3) {$a_{3}A_{1}$};
\node[] at (13.413,-3.3) {$a_{3}A_{2}$};

\draw[ultra thick,dashed,->] (7,-0.6)--(6.5,-1.2);
\draw[ultra thick,dashed,->] (7,-0.6)--(7,-1.2);
\draw[ultra thick,dashed,->] (7,-0.6)--(7.5,-1.2);
\node[] at (7.6,-0.7) {$\mathbb I_{2}$};

\draw[ultra thick,dashed,->] (7,-2.1)--(7,-2.7);
\draw[ultra thick,dashed,->] (7,-2.1)--(7.5,-2.7);
\draw[ultra thick,dashed,->] (7,-2.1)--(6.5,-2.7);
\node[] at (7.6,-2.2) {$\mathbb I_{2}$};

\draw[ultra thick,dashed,->] (1.75,-2.1)--(2.25,-2.7);
\draw[ultra thick,dashed,->] (1.75,-2.1)--(1.25,-2.7);
\node[] at (2.35,-2.2) {$\mathbb I_{1}$};

\draw[ultra thick,dashed,->] (12.25,-2.1)--(12.75,-2.7);
\draw[ultra thick,dashed,->] (12.25,-2.1)--(11.75,-2.7);
\node[] at (12.85,-2.2) {$\mathbb I_{1}$};

\node[] at (7,-3.8) {$\vdots$};
\node[] at (7,-4.8) {$\vdots$};

\end{tikzpicture}
\caption{Generation of an $\infty$-variable Cantor set by applying the Iterated Function Systems $\mathbb I_{1}$ and $\mathbb I_{2}$ independently for every finite coding in the preceding level.}
\label{1vsInfiniteVar2}
\end{figure}

\begin{exam}
Figures~\ref{1vsInfiniteVar1} and~\ref{1vsInfiniteVar2} show the difference in construction of $1$-variable and $\infty$-variable RIFS. Both attractors are created by the two IFSs $\mathbb I_{1}=\{1/3x, 1/3x+2/3\}$ and $\mathbb I_{2}=\{1/4x,  1/4x+3/8, 1/4x+3/4\}$, with $\vec\pi=\{1/2,1/2\}$ but in the $1$-variable construction the IFS chosen is uniform on every level of the construction, whereas the $\infty$-variable attractor is not subject to this restriction.
The Hausdorff dimension of both of these attractors can be calculated to be almost surely $\dim_{H}F_{1\text{-var}}=0.721057$ and $\dim_{H}F_{\infty\text{-var}}=0.724952$ (both to 6 s.f.), see below.
\end{exam}

Perhaps contrary to first impression, the independence in $\infty$-variable attractors makes them easier to analyse and we shall give some results for the two settings below.
Assuming a random analogue of the OSC, the uniform open set condition (UOSC), we find that in the $\infty$-variable case the Hausdorff dimension is a.s.\ given by the unique $s$ satisfying
\[
\E_{i\in\Lambda}\left(\sum_{j\in\mathcal I_i}c_j^s\right)=1,
\]
(see Falconer~\cite{Falconer86}, Mauldin-Williams~\cite{Mauldin86} and Graf~\cite{Graf87}) whilst in the $1$-variable case it is a.s.\ the unique $s$ satisfying
\begin{equation}\label{1varEquation}
\E_{i\in\Lambda}\left(\log\sum_{j\in\mathcal I_i}c_j^s\right)=0,
\end{equation}
(see Hambly~\cite{Hambly92}). Further it has been observed that for the $\infty$-variable construction the Hausdorff and upper box dimension coincide almost surely, see Liu and Wu \cite{Liu02}. The latter result, and the equality of Hausdorff and upper box counting dimension for deterministic self-similar attractors of Falconer~\cite{Falconer89} are the main motivation for this manuscript, in which we prove that the Hausdorff and upper box dimension coincide, independent of overlap, almost surely. This generalises previously mentioned results and complements them, to give a more complete characterisation of this new model of random graph directed attractors, which naturally encompass the class of $1$-variable and $\infty$-variable (standard IFS) self-similar sets.
Furthermore we shall show that, in contrast to the Hausdorff, packing, and box counting dimension, the Assouad dimension is almost surely maximal in the sense that it is bounded below by an expression resembling the joint spectral radius of matrices rather than the Lyapunov exponents of random matrix multiplication.  This relates to earlier work of Fraser, Miao and Troscheit~\cite{Fraser14c} in which the Assouad dimension was found for different types of random IFS and percolation structures.
Here we find that some of the mentioned extensions in~\cite{Fraser14c} hold and the Assouad dimension is related to the joint spectral radius of a construction we shall introduce in Section~\ref{resultsSection}.

There is, of course, the natural question of an extension of deterministic graph directed attractors to a random version.
The usual model for this considers a fixed directed multi-graph, where for each edge we associate a family of maps with a probability measure and choose a map in a recursive fashion according to this probability measure. This  model was extensively studied in Olsen~\cite{Olsen94} and we refer to this book and the references contained therein.
Here we shall adopt a different natural model, that has so far not been considered in the literature but is nevertheless an object that arises in the study of sets that have orthogonal projections more complicated than for simple self-similar IFSs. 
Instead of one fixed graph, we consider a finite collection of graphs with an associated probability vector. We consider a $1$-variable random graph directed system (RGDS) and then a $\infty$-variable RGDS, where instead of the maps, the graphs and hence the relations between vertex sets changes in a random fashion. 
One example of sets whose projections fail to be self-similar RIFS but are random graph directed attractors in our sense, are the $V$-variable extensions of self-affine carpets in the sense of Fraser~\cite{Fraser12}. Failure here is caused by the non-trivial rotations and the projections cannot be described by the standard RIFS model but can be by the RGDS proposed here, see~\cite{Troscheit15a}.

It is worth noticing that many standard random models can be recovered by setting up the RGDS in the right way. 
Choosing graphs with a single vertex allows the RGDS set-up to be used to analyse $1$-variable and random recursive attractors. The class of $V$-variable attractors are specific $1$-variable RGDS in our sense, where one choses a vertex set with $V$ vertices and the $\Gamma_{i}$ with edges and probabilities appropriately.
Results about several other standard models can be deduced from our main theorems, see Corollary~\ref{VvarandOther}.
It is a quick calculation to show that $V$-variable constructions satisfy all conditions in Definition~\ref{graphDefs} and one can reduce the $V$-variable randomness to the simpler $1$-variable RGDA construction treated here. The model developed in this manuscript can be further generalised to $V$-variable RGDS and higher order random graph directed systems with the methods introduced here but we will not deal with the additional complexity of these constructions.
We also remark that in the $\infty$-variable case we are allowed to have paths that can become extinct, so choosing the graphs and maps appropriately our model specialises to fractal and Mandelbrot percolation.

We first give basic notation, define the model and give our main results for $1$-variable RGDS in Section~\ref{resultsSection}. Section~\ref{infVarSect} contains our $\infty$-variable results and proofs are contained in Section~\ref{proofsection}.

\section{Notation and preliminaries for $1$-variable RGDS}\label{resultsSection}
Let $\mathbf{\Gamma}=\{\Gamma_i\}_{i\in\Lambda}$ be a finite collection of graphs $\Gamma_i=\Gamma(i)=(V(i),E(i))$ indexed by $\Lambda=\{1,\hdots,n\}$, each with the same number of vertices. For simplicity we will assume that they share the same set of vertices $V(i)=V$.
The set $E(i)$ is the set of all directed edges and we write $\tensor*[_v]{E}{_w}(i)$ to denote the edges from $v\in V$ to $w\in V$. We write $\tensor*[]{E}{_w}(i)=\bigcup_{v\in V}\tensor*[_v]{E}{_w}(i)$ and $\tensor*[_v]{E}{}(i)=\bigcup_{w\in V}\tensor*[_v]{E}{_w}(i)$ for $i\in\Lambda$.
For all edges $e$ we write $\iota(e)$ and $\tau(e)$ to refer to initial and terminal vertex, respectively.
The set of all infinite strings with entries in $\Lambda$ we denote by $\Omega=\Lambda^{\N}$, whereas all finite strings of length $k$ are given by $\Omega^k=\Lambda^k$, and the set of all finite strings is $\Omega^*=\bigcup_{k\in\N}\Omega^k$. Elements in $\Omega^*$ and $\Omega$ are given by subscript notation, for $\omega\in\Omega$ we have $\omega=(\omega_1,\omega_2,\hdots)$ and for $\omega\in\Omega^*$ we have $\omega=(\omega_1,\omega_2,\hdots,\omega_l)$, where $l=\lvert\omega\rvert$ is the length of the string $\omega$. We define the $w\in\Omega^*$ cylinder in $\Omega$ to be the set of all infinite sequences starting with the finite word $w$. For $w\in\Omega^{*}$ we define the $w$-cylinder $[w]=\{\omega\in\Omega\mid \omega_{i}=w_{i}\text{ for }1\leq i\leq \lvert w\rvert\}$. 
We define a metric on $\Omega$ by $d(x,y)=2^{-\lvert x\wedge y\rvert}$, where $x\wedge y=z\in \Omega^{m}$ for $m=\max\{k\mid x_i=y_i\text{ for all }1\leq i\leq k\}$, and $z_{i}=x_{i}=y_{i}$ for all $1\leq i\leq m$, and $d(x,y)=0$ if no such $k$ exists. 
The metric induces a topology on $\Omega$ which is also generated by the cylinder sets, which are in fact clopen sets. We consider the shift map $\sigma(\omega_1,\omega_2,\hdots)=(\omega_2,\omega_3,\hdots)$ on $\Omega$. We can define a Bernoulli probability measure $\mu$ on $\Omega$ with probability vector ${\vec\pi}=\{\pi_1,\pi_2,\hdots,\pi_n\}$ first on all the cylinders $\omega\in\Omega^{*}$ by taking
\[
\mu([\omega])=\prod_{k=1}^{\lvert\omega\rvert}\pi_{\omega_{k}}.
\]
As the cylinders generate the topology of $\Omega$, the Carath\'eodory extension theorem implies that $\mu$ extends to a unique Borel measure on $\Omega$.

Given a collection of graphs $\mathbf{\Gamma}$ we are now interested in the attractor of two associated random processes. We first describe the $1$-variable case. For $v\in V$, we define the random attractor $K_{v}$ for $v\in V$ in terms of paths on the randomly chosen graphs. Let $\tensor*[_{v}]{E}{^{k}_{u}}(\omega)$ be the set of all paths of length $k$ consisting of edges starting at $v$ and ending at $u$ and traversing through the graph $\Gamma_{\omega_{q}}$ at step $q$, that is \label{sym:randompath}
\begin{multline}
\tensor*[_{v}]{E}{^{k}_{u}}(\omega)=\{\mathbf{e}=(e_{1},e_{2},\hdots,e_{k})  \mid   \iota(e_{1})=v, \tau(e_{k})=u, \iota(e_{l+1})=\tau(e_{l}) \\\text{ for }1\leq l \leq k-1 \text{ and }e_{i}\in E(\omega_{i})\}.\nonumber
\end{multline}
To each edge $e\in \{e\in E(i) \mid i\in\Lambda\}$ we associate a strictly contracting self-map $S_{e}:\R^{d}\to\R^{d}$ and choose a compact \emph{seed set} $\Delta\in\mathcal{K}(\R^{d})$ such that $\Delta=\overline{\interior \Delta}$ and $S_{e}(\Delta)\subset\Delta$ for all $e\in E(i)$ and $i\in\Lambda$.
In this notation we have
\[
K_{v}(\omega)=\bigcap_{l=1}^{\infty}\bigcup_{u\in V}\bigcup_{\mathbf{e}\in \tensor*[_{v}]{E}{^{l}_{u}}(\omega)}S_{\mathbf{e}}(\Delta),
\]
where $S_\mathbf{e}=S_{e_{1}}\circ S_{e_{2}}\circ\hdots\circ S_{e_{|\mathbf{e}|}}$. The set $K_{v}(\omega)$ is well-defined for every $\omega$ and $v$ and it is a simple application of Banach's fixed point theorem to show that $K_{v}(\omega)$ is compact and non-empty.
Even though this holds for all collections of contracting maps, we restrict our attention to similarities, i.e.\ maps such that $\lvert S_e (x)- S_e (y) \rvert = c_e \lvert x-y \rvert$ for some $0<c_e < 1$ and all $x,y\in \R^d$.

We denote the expectation of a random variable $X:Z\to\R$, where $Z$ is the space of all possible outcomes (realisations) $z\in Z$, by $\E_{z}X(z)=\int_{Z}X(z)\,d\nu(z)$, with $\nu$ an appropriate probability measure on $Z$. The probability that an event $\mathfrak F$ occurs shall be denoted by $\Prob(\mathfrak F)$ and we write $\E^{\geo}$ for the \emph{geometric expectation} 
\[
\E^{\geo}_{z}X(z)=\exp\int_{Z}\log X(z)\,d\nu(z).
\]
We shall leave out the subscript from the expectation notation if it is clear from context which space of outcomes we are considering.

We will refer to the Hausdorff, packing, Assouad, upper and lower box counting dimension by $\dim_{H}$, $\dim_{\text{P}}$, $\dim_{A}$, $\overline\dim_{B}$, $\underline\dim_{B}$, respectively. If the box counting dimension exists we shall refer to it as simply $\dim_{B}$.

\begin{defn}
Let $F\subseteq\R^{d}$ and $s\in\R^{+}_{0}$, we define the $s$-dimensional Hausdorff $\delta$-premeasure of $F$ by
\[
\Haus^{s}_{\delta}(F)=\inf\left\{ \sum_{k=1}^{\infty} \lvert U_{k} \rvert^{s} \mid\{U_{i}\}\text{ is a $\delta$-cover of $F$} \right\},
\]
where the infimum is taken over all countable $\delta$-covers and $\lvert U\rvert$ refers to the diameter of $U$. The $s$-dimensional Hausdorff measure of $F$ is then 
\[
\Haus^{s}(F)=\lim_{\delta\to0}\Haus^{s}_{\delta}(F).
\] 
The \emph{Hausdorff dimension} is defined to be 
\[
\dim_{H}F=\inf\{s \mid \Haus^{s}(F)=0\}.
\]
\end{defn}

\begin{defn}
Let $X\subseteq \R^{d}$ be a totally bounded set and $N_{\epsilon}(X)$ be the smallest number of sets of diameter $\epsilon$ or less needed to cover $X$. The upper and lower box counting dimensions of $X$ are, respectively, given by
\[
\overline\dim_{B}X=\limsup_{\epsilon\to0}\frac{\log N_{\epsilon}(X)}{-\log\epsilon}
\hspace{0.3cm}\text{ and }\hspace{0.3cm}
\underline\dim_{B}X=\liminf_{\epsilon\to0}\frac{\log N_{\epsilon}(X)}{-\log\epsilon}.
\]
If the limit exists we refer to the box counting dimension as
\[
\dim_{B}X=\lim_{\epsilon\to0}\frac{\log N_{\epsilon}(X)}{-\log\epsilon}.
\]

\end{defn}

\begin{defn}
Let $X\subseteq \R^{d}$, we define the Assouad dimension of $X$ to be
\begin{multline*}
\dim_A X \ = \  \inf \Bigg\{ \alpha \  : \ \text{     there exists a constant $C>0$ such that,} \\
\text{ for all $0<r<R<\infty$, we have $\ \sup_{x \in F} \, N_r\big( B(x,R) \cap F \big) \ \leq \ C \bigg(\frac{R}{r}\bigg)^\alpha$ } \Bigg\}.
\end{multline*}
\end{defn}

As we will not directly deal with packing dimension we omit the definition. A detailed introductory treatment to the classical notions of fractal dimension (Hausdorff, packing and box counting) is Falconer~\cite{FractalGeo}. For a summary of properties of the Assouad dimension see Fraser~\cite{Fraser14a}. In particular
\[
\dim_{H}F\leq\underline\dim_{B}F\leq\overline\dim_{B}F\leq\dim_{A}F.
\]

In many places the $1$-variable result depends on a structure that is an infinite matrix over finite matrices with (semi-)ring element entries. Let $\mathcal M_{n\times n}(\R)$ be the space of all $n\times n$ matrices with real entries and $\mathcal M_{n\times n}(\R^{+}_{0})$ the set of all $n\times n$ matrices with non-negative entries.
We shall also consider the set of square matrices with entries that are finite non-negative matrices
\[
\mathfrak M_{k,n}=\mathcal M_{k\times k}(\mathcal M_{n\times n}(\R^{+}_{0})),
\]
 and the (vector) space of countably infinite, upper triangular matrices with entries that are finite real-valued matrices 
\[
\mathfrak M^{*}_{\N,n}=\mathcal M_{\N\times \N}(\mathcal M_{n\times n}(\R)),
\]
such that for every $M\in\mathfrak M^{*}_{\N,n}$ the number of row entries that are not the zero matrix is uniformly bounded and
\begin{equation}\label{eqn:maxithing}
\sup_{j\in\N}\sum_{i=0}^{\infty}\lVert M_{i,j}\rVert_{row}<\infty,
\end{equation}
where $\lVert.\rVert_{row}$ is the matrix norm, see below.
It can be checked that $\mathfrak M^{*}_{\N,n}$ is a vector space and we  consider the subset consisting of non-negative entries
\[
\mathfrak M_{\N,n}=\mathcal M_{\N\times \N}(\mathcal M_{n\times n}(\R^{+}_{0}))\subset \mathfrak M^{*}_{\N,n}.
\]
We note that the only infinite matrices we are considering are upper triangular.
Further, while the sets $\mathfrak M_{k,n}$ and $\mathfrak M_{\N,n}$ are not vector spaces per se, they are subsets of vector spaces that are closed under multiplication and addition. We define the following norms and seminorms.
\begin{defn}
Let $M\in\mathcal M_{n\times n}(\R)$, we define
\[
\lVert M \rVert_{\text{row}}=\max_{i}\sum_{j}\lvert M_{i,j}\rvert
\]
\[
\lVert M \rVert_{1}=\sum_{i}\sum_{j}\lvert M_{i,j}\rvert
\]
which can easily seen to be (equivalent) norms. For $M^{*}\in\mathfrak{M}^{*}_{\N,n}$, the space of infinite matrices consisting of matrix entries with  real entries, such that only finitely many matrices in each row are not $\mathbf{0}$ (the $n\times n$ zero matrix) and the norm of each row sum is uniformly bounded, we define the norm
\[
\vertiii{M^{*}}_{\sup}=\sup_{i^{*}\in\N}\sum_{j^{*}=1}^{\infty}\lVert(M^{*})_{i^{*},j^{*}}\rVert_{\text{row}}.
\]
Furthermore we define two seminorms. The first $\vertiii{\mathds{1}\,.\,}$ is given by (\ref{infiniteSeminorm}) and defined on the same space $\mathfrak{M}_{\N,n}^*$ of infinite matrices with real-valued matrix entries such that the number of non-zero matrix entries is uniformly bounded above and (\ref{eqn:maxithing}) is satisfied. The second seminorm $\vertiii{\mathds{1}_{l}\,.\,}_{(1,1)}$, given by (\ref{square1norm}), is defined on the space of $l$ by $l$ matrices with $n$ by $n$ real matrix entries.
We slightly abuse notation here and concisely write $\lVert\mathbf{v}\rVert_{s}$, where $\mathbf{v}$ is a vector with matrix entries, to mean the matrix sum of all, possibly infinite, vector entries.
 Here $\mathds{1}=\{\mathbf{1},\mathbf{0},\mathbf{0},\dots\}$ is an infinite vector and $\mathds{1}_{l}$ is the vector of dimension $l$ satisfying $\mathds{1}_{l}=\{\mathbf{1},\mathbf{0},\mathbf{0},\dots,\mathbf{0}\}$, where $\mathbf{1}$ is the $n\times n$ identity matrix.
\begin{equation}\label{infiniteSeminorm}
\vertiii{\mathds{1}M}=\lVert\lVert\mathds{1}M\rVert_{s}\rVert_{\text{row}}=\left\lVert \sum_{k=1}^{\infty} (\mathds{1}M)_{k} \right\rVert_{\text{row}}
\end{equation}
\begin{equation}\label{square1norm}
\vertiii{\mathds{1}_{l}M}_{(1,1)}=\lVert\lVert\mathds{1}_{l}M\rVert_{s}\rVert_{1}=\sum_{i,j\in\{1,\dots,n\}} \sum_{k=1}^{l} ((\mathds{1}_{l}M)_{k})_{i,j}
\end{equation}
\end{defn}

Before we introduce further necessary notation we refer the reader to two important corollaries of our more general results. 
First, in Corollary~\ref{separatedCorollary} we state the almost sure Hausdorff dimension of our $1$-variable random graph directed systems, assuming the uniform strong separation condition. The quantity $p^{t}_{1}(\omega,1)$ referred to in~(\ref{USSCCorFormula}) is simply the Hutchinson-Moran matrix for the graph-directed iterated function system associated with $\Gamma(\omega_{1})$.
Furthermore Corollary~\ref{VvarandOther} states that for self-similar $1$-variable sets, and even $V$-variable sets in the sense of Barnsley et al.~\cite{Barnsley08}, we must have $\dim_{H}F_{\omega}=\dim_{B}F_{\omega}$ for almost every $\omega\in\Omega$.

\subsection{Arrangements of words}\label{wordConstructionSection}
To describe the cylinders and points in the attractor of Iterated Function Systems and Graph Directed Systems, one uses a natural coding. 
In this section we shall give a more abstract way of manipulating words that will become useful in describing the construction in random systems.
We introduce two binary operations $\oplus$ and $\odot$ that take over the r\^oles of set union and concatenation, respectively, to manipulate strings in a meaningful way.
\begin{defn}
Let $\mathcal{G}^{E}$ be a finite alphabet, which in this article is the set of letters identifying the edges of the graphs $\Gamma_{i}$, i.e.\ $\mathcal{G}^{E}=\{e \mid e\in E(i)\text{ and }i\in\Lambda\}$. We define the \emph{prime arrangements} $\mathcal{G}$ to be the set of symbols $\mathcal{G}=\{\varnothing,\epsilon_{0}\}\cup\mathcal{G}^{E}$. Clearly both $\mathcal{G}$ and $\mathcal{G}^{E}$ are finite and non-empty.

Define $\beth^{\odot}$ to be the free monoid with generators $\mathcal{G}^{E}$ and identity $\epsilon_{0}$, and define $\beth^{\oplus}$ to be the free commutative monoid with generators $\beth^{\odot}$ and identity $\varnothing$. We define $\odot$ to be left and right multiplicative over $\oplus$, and $\varnothing$ to annihilate with respect to $\odot$. That is, given an element $e$ of $\beth^{\odot}$, we get $e\odot\varnothing=\varnothing\odot e=\varnothing$. 
We define $\beth^{*}$ be the set of all finite combinations of elements of $\mathcal{G}$ and operations $\oplus$ and $\odot$. Using distributivity $\beth=(\beth^{*},\oplus,\odot)$ is the non-commutative free semi-ring with `addition' $\oplus$ and `multiplication' $\odot$ and generator $\mathcal{G}^{E}$ and we will call $\beth$ the \emph{semiring of arrangements of words} and refer to elements of $\beth^{*}$ as \emph{(finite) arrangements of words}. 
\end{defn}
We adopt the convention to `multiply out' arrangements of words and write them as elements of $\beth^{\odot}$. Furthermore we omit brackets, where appropriate, replace $\odot$ by concatenation to simplify notation, and for arrangements of words $\phi$ write $\varphi\in\phi$ to refer to the maximal subarrangements $\varphi$ that do not contain $\oplus$ and are thus elements of $\varphi\in\beth^{\odot}$.

\begin{exam}
Let $\mathcal{G}^{E}=\{0,1\}$. The set of prime arrangements is then $\{\varnothing, \epsilon_{0}, 0,1\}$ and the elements of the semiring $\beth^{*}$ are all possible concatenations $\odot$ and unions $\oplus$, e.g.
\[
1\odot0\oplus1=10\oplus1, \;\;(110\oplus101\oplus\epsilon_{0})\odot1=1101\oplus1011\oplus1,\;\; \varnothing\odot(10\oplus101)=\varnothing,\dots
\]
\end{exam}
The usefulness of the description above is that $\beth^{*}$ is ring isomorphic to the set of all cylinders with set union and concatenation as the binary operations and we can use $\odot$ and $\oplus$ to describe collections of cylinders. For example the set containing all cylinders of length $k$ can be identified with the arrangement of words $(0\oplus 1)^{k}$.

We can now use the algebraic structure above to give descriptions of 1-variable RIFS. 

\begin{exam}
Consider the simple setting of just two Iterated Functions Systems $\mathbb L=\{\mathbb I_{1}, \mathbb I_{2}\}$ that are picked at random according to probability vector $\vec\pi=\{\pi_{1},\pi_{2}\}$, $\pi_{i}>0$. Let $\phi_{i}=a^{i}_{1}\oplus\dots\oplus a^{i}_{n}$, where $a_{j}^{i}$ are the letters in the alphabet associated with IFS $\mathbb{I}_{i}$. The arrangement of words describing the cylinders of length $k$ with realisation $\omega$ is then simply 
\[
\phi_{\omega_{1}}\odot\phi_{\omega_{2}}\odot\dots\odot\phi_{\omega_{k}}.
\]
\end{exam}

Before we can apply this construction to our RGDS  we need to extend this concept to the natural analogue of matrix multiplication $\otimes$ and addition, which we shall also refer to as $\oplus$.

\begin{defn}
Let $\mathbf{M}$ and $\mathbf{N}$ be square $n\times n$ matrices and $\mathbf{v}=\{v_{1},\dots,v_{n}\}$ be a $n$-vector with entries being arrangements of words. We define matrix multiplication in the natural way,
\[
(\mathbf{M}\otimes\mathbf{N})_{i,j}=\bigoplus_{k=1}^{n}(\mathbf{M}_{i,k}\odot\mathbf{N}_{k,j}),\hspace{0.5cm}
(\mathbf{M}\oplus\mathbf{N})_{i,j}=\mathbf{M}_{i,j}\oplus\mathbf{N}_{i,j},
\]
\[
(\mathbf{v}\otimes\mathbf{M})_{i}=\bigoplus_{k=1}^{n}({v}_{k}\odot\mathbf{M}_{k,i}).
\]
\end{defn}\label{wordMultiplicationdef}
We extend this to multiplication of countable (finite or infinite) square matrices with matrix entries.
\begin{defn}
Let $\mathbf{M^{*}}$ and $\mathbf{N^{*}}$ be elements of $\mathcal{M}_{k,k}(\mathcal{M}_{n,n}((\beth^{*}))$ and $\mathbf{v}^{*}\in (\mathcal{M}_{n,n}((\beth^{*}))^{k}$, where $k\in\N\cup\{\N\}$. We define multiplication and addition by 
\[
(\mathbf{M^{*}}\otimes\mathbf{N^{*}})_{i,j}=\bigoplus_{l=1}^{k}(\mathbf{M^{*}}_{i,l}\otimes\mathbf{N^{*}}_{l,j}),\hspace{0.5cm}
(\mathbf{M^{*}}\oplus\mathbf{N^{*}})_{i,j}=\mathbf{M^{*}}_{i,j}\oplus\mathbf{N^{*}}_{i,j},
\]
and
\[
(\mathbf{v}^{*}\otimes\mathbf{M}^{*})_{i}=\bigoplus_{l=1}^{k}(\mathbf{v}^{*}_{l}\otimes \mathbf{M}^{*}_{l,i}).
\]
\end{defn}

\subsection{Stopping graphs}\label{stoppingGraphs}
We continue this section by introducing the notion of the $\epsilon$-stopping graph. Before we can do so we need some conditions on our graphs $\mathbf{\Gamma}$.
\begin{defn}\label{graphDefs}
Let $\mathbf{\Gamma}=\{\Gamma_{i}\}_{i\in\Lambda}$ be a finite collection of graphs, sharing the same vertex set $V$. 
\begin{enumerate}[label=\arabic{section}.\arabic{theo}.\alph*]
\item\label{nontrivialgraph} We say that the collection $\mathbf{\Gamma}$ is a \emph{non-trivial collection of graphs} if for every $i\in\Lambda$ and $v\in V$ we have $\tensor*[_{v}]{E(i)}{}\neq\varnothing$.  Furthermore we require that there exist $i,j\in\Lambda$ and $e_{1}\in\Gamma(i)$ and $e_{2}\in \Gamma(j)$ such that $S_{e_{1}}\neq S_{e_{2}}$.

\item\label{strongConnected} If for every $v,w\in V$ there exists $\omega^{v,w}\in\Omega^{*}$ such that $\tensor[_{v}]{E}{_{w}}(\omega^{v,w})\neq\varnothing$ and $\mu([\omega^{v,w}])>0$, we call $\mathbf{\Gamma}$ \emph{stochastically strongly connected}.

\item\label{contractingSSRGDS} We call the \emph{Random Graph Directed System (RGDS)} associated with $\mathbf{\Gamma}$ a \emph{contracting self-similar RGDS} if for every $e\in E(i)$, $S_{e}$ is a contracting similitude.
\end{enumerate}
\end{defn}
Condition \ref{strongConnected} implies that at each stage of the construction there is a positive probability that one can travel from every vertex to every other in a finite number of steps.
As every map for every edge in $\mathbf{\Gamma}$ is a strict contraction the maximal similarity coefficient $c_{\max}=\max\{c_\textbf{e}\mid \textbf{e}\in E(i)\text{ and }i\in\Lambda\}$ satisfies $c_{\max}<1$. This gives us that for every $\epsilon>0$ there exists a least $k_{\max}(\epsilon)\in\N$ such that $c_{\max}^{k_{\max}}<\epsilon$ and hence every path $\mathbf{e}\in\tensor*[]{E}{^{k_{\max}}}(\omega)$ has an associated contraction $c_\mathbf{e}<\epsilon$.
Therefore all paths of length comparable with $\epsilon$ only depend, at most, on the first $k_{\max}(\epsilon)$ letters of the random word $\omega\in\Omega$ and thus the set of $\epsilon$-stopping graphs below is well defined.
\begin{defn}\label{stoppingGraphDefn}
Let $\mathbf{\Gamma}$ be a non-trivial, finite collection of graphs sharing vertex set $V$, satisfying Condition~\ref{contractingSSRGDS}. Let $E^{*}(\omega,\epsilon)$ be the set of paths $\mathbf{e}$, corresponding to the realisation $\omega$, such that $S_\mathbf{e}$ is a contraction with similarity coefficient comparable to $\epsilon$:
\begin{multline}
E^{*}(\omega,\epsilon)=\left\{\mathbf{e}\in\bigcup_{k=1}^{k_{\max}(\epsilon)}\tensor*[]{E}{^{k}}(\omega) \mid c_\mathbf{e}\leq\epsilon \text{ for }\mathbf{e}=(e_{1},\hdots,e_{\lvert\mathbf{e}\rvert})\right.\\
\left.\vphantom{\bigcup_{k=1}^{k_{\max}(\epsilon)}\tensor*[]{E}{^{k}}}
\text{ but }c_{\mathbf{e}^{\ddagger}}>\epsilon\text{ for }\mathbf{e}^{\ddagger}=(e_{1},\hdots,e_{\lvert\mathbf{e}\rvert-1})\right\}.\nonumber
\end{multline}
Now consider all possible subsets of these sets of edges $\mathcal{E}(\omega,\epsilon)$, such that the images of $\Delta$ are pairwise disjoint in each of the subsets
\[
\mathcal{E}(\omega,\epsilon)=\{U \subseteq E^{*}(\omega,\epsilon) \mid \text{for all } \mathbf{e},\mathbf{f}\in U\text{ we have } S_\mathbf{e}(\Delta)\cap S_\mathbf{f}(\Delta)=\varnothing\}.
\]
As $\mathcal{E}(\omega,\epsilon)$, and every $U_{i}\in\mathcal{E}(\omega,\epsilon)$, has finite cardinality we can order $\{U_{i}\}$ in descending order, i.e.\ $\lvert U_{m}\rvert\geq \lvert U_{m+1}\rvert$. Finally we define  $E(\omega,\epsilon)$ to be the first, and thus maximal, element $E(\omega,\epsilon)=U_{0}$.\\
The \emph{$\epsilon$-stopping graph} is then defined to be
\[
\mathbf{\Gamma}^{\epsilon}=\{\Gamma^{\epsilon}(\omega) \mid z_{i}\in\Lambda^{k_{\max}(\epsilon)}\text{ and } \omega\in[z_{i}]\}\text{, with }
\Gamma^{\epsilon}(\omega)=(V,E(\omega,\epsilon)).
\]
In fact it does not matter which $\omega\in[z_{i}]$ is chosen as $\Gamma^{\epsilon}(\omega)$ only depends on, at most, the first $k_{\max}(\epsilon)$ letters.
\end{defn}
 By the arguments above it can easily be seen that the collection $\mathbf{\Gamma}^{\epsilon}$ is finite for every $\epsilon>0$ and every edge of $\mathbf{\Gamma}^{\epsilon}$ is a finite path in $\mathbf{\Gamma}$ for the same $\omega$. However there may be some paths in $\mathbf{\Gamma}$ that are not edges of $\mathbf{\Gamma}^{\epsilon}$ for any $\epsilon$, but for $\epsilon$ small enough, eventually that path will be a prefix of an edge coding.
\begin{lma}\label{lma:stoppingrepl}
For every realisation $\omega$ the set of edges in $\mathbf{\Gamma}^{\epsilon}$ forms a stopping set. That is, for every path in $\mathbf{\Gamma}$ there exists an $\epsilon>0$ such that the path exists in $\mathbf{\Gamma}^{\epsilon}$ (although it may only be a prefix of a path) such that for every $\epsilon$ the collection $\mathbf{\Gamma}^{\epsilon}$ is finite as well as the edge set of every $\Gamma^{\epsilon}_{i}\in\mathbf{\Gamma}^{\epsilon}$.
\end{lma}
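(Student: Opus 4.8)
The plan is to unpack the two assertions separately, since the finiteness statements are essentially bookkeeping already contained in the preceding discussion, while the ``stopping set'' statement is where the real (but still elementary) content lies. First I would recall that, by the remarks following Definition~\ref{graphDefs}, the maximal contraction ratio satisfies $c_{\max}<1$, so for every $\epsilon>0$ there is a least $k_{\max}(\epsilon)\in\N$ with $c_{\max}^{k_{\max}(\epsilon)}<\epsilon$; consequently every path $\mathbf{e}$ of length at least $k_{\max}(\epsilon)$ has $c_{\mathbf e}\le c_{\max}^{k_{\max}(\epsilon)}<\epsilon$. This bounds the length of any element of $E^{*}(\omega,\epsilon)$ by $k_{\max}(\epsilon)$, and since each graph $\Gamma_i$ has finitely many edges, $E^{*}(\omega,\epsilon)$ is a finite set. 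The collection $\mathbf{\Gamma}^{\epsilon}$ is indexed by cylinders $[z_i]$ with $z_i\in\Lambda^{k_{\max}(\epsilon)}$, of which there are only $n^{k_{\max}(\epsilon)}$, so $\mathbf{\Gamma}^{\epsilon}$ is finite; and each $\Gamma^{\epsilon}(\omega)$ has edge set $E(\omega,\epsilon)=U_0\subseteq E^{*}(\omega,\epsilon)$, hence finite. This disposes of the finiteness clauses.

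For the stopping-set property, fix a realisation $\omega$ and an arbitrary path $\mathbf{e}=(e_1,\dots,e_m)$ in $\mathbf{\Gamma}$ compatible with $\omega$ (that is, $e_i\in E(\omega_i)$ with matching initial/terminal vertices). I would argue that for all sufficiently small $\epsilon$, either $\mathbf e$ itself or a suitable extension of $\mathbf e$ lies in $E^{*}(\omega,\epsilon)$, and moreover $\mathbf e$ is at least a prefix of some element of $E(\omega,\epsilon)$. The key observation is that for $\epsilon$ small enough that $k_{\max}(\epsilon)>m$, every path that extends $\mathbf{e}$ and has length $k_{\max}(\epsilon)$ contracts by more than $\epsilon$ in its proper prefix only finitely often, so by descending along any fixed infinite extension of $\mathbf e$ (which exists by the non-triviality condition \ref{nontrivialgraph}, $\tensor*[_v]{E(i)}{}\neq\varnothing$ for all $v$ and $i$) one reaches a unique length at which the contraction first drops below $\epsilon$; the resulting truncated path lies in $E^{*}(\omega,\epsilon)$ and has $\mathbf e$ as a prefix. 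Then one must check that at least one such extension survives the maximal-disjoint-subset selection defining $E(\omega,\epsilon)=U_0$. Here I would note that the singleton $\{\mathbf f\}$, for any $\mathbf f\in E^{*}(\omega,\epsilon)$, trivially belongs to $\mathcal E(\omega,\epsilon)$; since $U_0$ is chosen to have \emph{maximal cardinality}, if $U_0$ were disjoint from the set of all extensions of $\mathbf e$ one could enlarge it — or, more carefully, the images $S_{\mathbf g}(\Delta)$ for $\mathbf g\in E^{*}(\omega,\epsilon)$ extending $\mathbf e$ are nested inside $S_{\mathbf e}(\Delta)$, and a maximal disjoint subfamily of $E^{*}(\omega,\epsilon)$ must meet this ``descendant cone'' (otherwise adjoining one of its maximal-length members keeps disjointness and increases the count). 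Thus $\mathbf e$ is a prefix of some edge of $\Gamma^{\epsilon}(\omega)$.

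The main obstacle I anticipate is the last point: verifying that the greedy maximal-disjoint selection $U_0$ cannot entirely avoid the descendants of a prescribed path $\mathbf e$. This requires a small combinatorial argument rather than a one-line observation, because $U_0$ is only asserted to be of maximal \emph{cardinality}, not maximal under inclusion, so one cannot simply ``add'' an element; instead one argues that any cardinality-maximal disjoint family is also inclusion-maximal (if $\{\mathbf f\}\cup U$ is disjoint and $\mathbf f\notin U$ then $|U\cup\{\mathbf f\}|>|U|$), and then that inclusion-maximality forces $U_0$ to intersect the descendant cone of every $\mathbf e$, using that the images $S_{\mathbf g}(\Delta)$, $\mathbf g$ ranging over the maximal-length descendants of $\mathbf e$ in $E^{*}(\omega,\epsilon)$, are pairwise disjoint (distinct length-$k_{\max}(\epsilon)$-type truncations give disjoint cylinders by the separation in the definition of $E^{*}$) and all contained in $S_{\mathbf e}(\Delta)$. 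Everything else is routine: the finiteness bounds are immediate from $c_{\max}<1$ and finiteness of $\Lambda$ and of each $E(i)$, and the existence of infinite extensions of $\mathbf e$ is exactly condition~\ref{nontrivialgraph}.
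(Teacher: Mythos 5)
The finiteness clauses and the first half of your stopping-set argument are fine, and they coincide with the only justification the paper offers: the lemma is stated without a formal proof, resting on the preceding discussion, which covers exactly the bound $k_{\max}(\epsilon)<\infty$, the resulting finiteness of $E^{*}(\omega,\epsilon)$, and the fact that $\mathbf{\Gamma}^{\epsilon}$ is indexed by the finitely many cylinders of length $k_{\max}(\epsilon)$. Your identification of where the real content lies --- whether some descendant of a prescribed path $\mathbf{e}$ survives the passage from $E^{*}(\omega,\epsilon)$ to the cardinality-maximal disjoint family $U_{0}=E(\omega,\epsilon)$ --- is correct, and it is precisely the point the paper glosses over.

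However, your proposed resolution of that point has two genuine gaps. First, the parenthetical claim that the maximal-length descendants of $\mathbf{e}$ in $E^{*}(\omega,\epsilon)$ have pairwise disjoint images ``by the separation in the definition of $E^{*}$'' is false: no separation is imposed in the definition of $E^{*}(\omega,\epsilon)$ (imposing it is the whole purpose of the subsequent selection of $U_{0}$), and in the presence of overlaps distinct descendants of $\mathbf{e}$ can have intersecting images. Second, and more seriously, upgrading cardinality-maximality to inclusion-maximality only tells you that every descendant $\mathbf{g}$ of $\mathbf{e}$ has $S_{\mathbf{g}}(\Delta)$ meeting $S_{\mathbf{f}}(\Delta)$ for some $\mathbf{f}\in U_{0}$; it does not tell you that any such blocker $\mathbf{f}$ is itself a descendant of $\mathbf{e}$. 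All the blockers could be paths not passing through $\mathbf{e}$ whose images happen to enter $S_{\mathbf{e}}(\Delta)$. To close this you would need an exchange argument: letting $B$ be the set of members of $U_{0}$ whose images meet $S_{\mathbf{e}}(\Delta)$ and $D_{0}$ a maximal pairwise-disjoint family of descendants of $\mathbf{e}$ in $E^{*}(\omega,\epsilon)$, the family $(U_{0}\setminus B)\cup D_{0}$ is again disjoint, so cardinality-maximality forces $\lvert D_{0}\rvert\leq\lvert B\rvert$, and one must then extract a contradiction (for instance by a volume comparison using that each image contains a ball of radius comparable to $\epsilon$, together with the fact that all images in $D_0$ lie inside $S_{\mathbf{e}}(\Delta)$ while those in $B$ are pairwise disjoint and meet it). Neither your sketch nor the paper supplies this step, so the key assertion --- that the prescribed path is a prefix of an edge of $\Gamma^{\epsilon}(\omega)$ --- remains unproved; note that the weaker statement actually invoked later (that from each vertex \emph{some} path survives deletion) is much easier, since a path is only deleted in favour of another one that is kept.
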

We will be considering $\epsilon$-stopping graphs derived from the original graph and show that if ${\Gamma}$ has `nice' properties (it satisfies most assumptions in Definition~\ref{graphDefs}), then ${\Gamma}^{\epsilon}$ also has these properties.
\begin{lma}
Let $\mathbf{\Gamma}$ be a non-trivial collection of graphs that is stochastically strongly connected. Then there exists $\epsilon'>0$ such that $\mathbf{\Gamma}^{\epsilon}$ is a non-trivial collection of stochastically strongly connected graphs for all $0<\epsilon\leq\epsilon'$ and almost every $\omega\in\Omega$.
\end{lma}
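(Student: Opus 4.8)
The plan is to check, one by one, the three defining properties in Definition~\ref{graphDefs} for the collection $\mathbf{\Gamma}^{\epsilon}$, for all sufficiently small $\epsilon$. That $\mathbf{\Gamma}^{\epsilon}$ and each edge set $E(\omega,\epsilon)$ are finite is precisely Lemma~\ref{lma:stoppingrepl}, valid for every $\epsilon>0$; moreover $\Gamma^{\epsilon}(\omega)$ depends only on $\omega_{1},\dots,\omega_{k_{\max}(\epsilon)}$, so all of the assertions below reduce to statements about the finitely many prefix classes $[z]$ with $z\in\Lambda^{k_{\max}(\epsilon)}$, and the qualifier ``almost every $\omega$'' only removes the measure-zero family of realisations on which the maximal disjoint subset $U_{0}$ of Definition~\ref{stoppingGraphDefn} is not uniquely determined.

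For non-triviality, fix a prefix class and a vertex $v$. Non-triviality of $\mathbf{\Gamma}$ gives an admissible path out of $v$ in $\mathbf{\Gamma}$; truncating it at the first position where the contraction drops below $\epsilon$ produces a member of $E^{*}(\omega,\epsilon)$ with initial vertex $v$. The delicate point is that at least one such path must survive the passage to $E(\omega,\epsilon)=U_{0}$; here I would use a packing estimate that, because $\Delta=\overline{\interior\Delta}$ and the $S_{e}$ are similitudes, each stopping cylinder has diameter comparable to $\epsilon$ and occupies a definite proportion of the volume of any ball of that radius it meets, hence can intersect only a bounded number $K$ of pairwise disjoint cylinders of comparable size, with $K$ independent of $\epsilon$ and $\omega$. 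Since for $\epsilon$ small the stopping cylinders issuing from $v$ fill a region containing a fixed ball, a maximal disjoint family omitting all of them is either not maximal or, up to a harmless choice between equal-cardinality families, can be replaced by one containing a $v$-path, so $\tensor*[_{v}]{E}{}(\omega,\epsilon)\neq\varnothing$. For the remaining clause, take edges $e_{1}\in\Gamma(i)$, $e_{2}\in\Gamma(j)$ with $S_{e_{1}}\neq S_{e_{2}}$ provided by the hypothesis, extend both by one common admissible continuation until they become stopping paths; invertibility of similitudes keeps the composed maps distinct, and for $\epsilon$ small the same packing estimate keeps both paths, exhibiting two stopping edges with distinct associated similarities.

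The substance of the lemma is stochastic strong connectedness of $\mathbf{\Gamma}^{\epsilon}$. Fix $v,w\in V$. Stochastic strong connectedness of $\mathbf{\Gamma}$ supplies a positive-measure word $a\in\Omega^{*}$ carrying an admissible $\mathbf{\Gamma}$-path $P\colon v\to w$ and, by concatenating connecting words, positive-measure loops $L_{v}\colon v\to v$ and $L_{w}\colon w\to w$. I would then examine the admissible $\mathbf{\Gamma}$-path $\gamma=L_{v}^{N_{1}}\,P\,L_{w}^{N_{2}}$ and its canonical greedy factorisation $\mathbf{e}^{(1)}\mathbf{e}^{(2)}\cdots$ into stopping paths, read as a walk through the stopping graphs $\Gamma^{\epsilon}(\omega),\Gamma^{\epsilon}(\sigma^{|\mathbf{e}^{(1)}|}\omega),\dots$ that visits the vertices $v=u_{0},u_{1},u_{2},\dots$. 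The cut positions have gaps lying between roughly $\log\epsilon/\log c_{\min}$ and $k_{\max}(\epsilon)$, hence range over a long interval for $\epsilon$ small, whereas $\gamma$ passes through $w$ along an arithmetic progression of positions; varying $N_{1}$ changes the phase with which the factorisation enters $P$ and varying $N_{2}$ appends controlled numbers of whole loops afterwards, so a pigeonhole argument on the residues of the cut positions --- using, if a periodicity obstruction appears, loops taken from a graph $\Gamma_{i}$ with a different cycle structure --- forces $u_{j}=w$ for some $j$. Then $\mathbf{e}^{(1)}\cdots\mathbf{e}^{(j)}$ is a $v\to w$ path in $\mathbf{\Gamma}^{\epsilon}$ realised on a prefix cylinder of $[\gamma]$, which has positive $\mu$-measure; this is exactly Condition~\ref{strongConnected} for $\mathbf{\Gamma}^{\epsilon}$, and a single $\epsilon'$ serving all pairs $(v,w)$ exists because $V$ is finite.

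I expect the hard part to be guaranteeing, throughout both arguments, that the stopping paths actually used lie in the \emph{pruned} edge sets $E(\cdot,\epsilon)$ and not merely in the larger sets $E^{*}(\cdot,\epsilon)$. When the original system overlaps heavily there may be no genuinely ``isolated'' cylinders, so one cannot just route around the pruning; instead I would argue directly from the packing estimate that for $\epsilon$ small the pruned graph still contains, between any two vertices, a number of edge-paths that grows without bound as $\epsilon\to0$, by showing that a maximal disjoint family must retain a non-negligible proportion of the stopping cylinders issuing from, and terminating at, each vertex. Converting this counting statement into a pruned $v\to w$ path meeting the exact length and endpoint constraints imposed by the stopping-graph coding --- uniformly over the finitely many prefix classes and all $0<\epsilon\leq\epsilon'$ --- and reconciling the variable-length stopping steps with the Bernoulli coding on $\Omega$, is where the genuine difficulty lies; the rest is bookkeeping.
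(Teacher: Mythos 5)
Your route is genuinely different from the paper's, which disposes of the lemma in a few lines: non-triviality of $\mathbf{\Gamma}^{\epsilon}$ is read off from the maximality of $E(\omega,\epsilon)=U_{0}$ (a path is only discarded when its image meets that of another path, so from each vertex at least one path survives), and stochastic strong connectedness is asserted to survive because, for $\epsilon$ small, any path deleted through overlap is accompanied by a distinct retained path, the degenerate case of all $S_{e}$ identical being excluded by Condition~\ref{nontrivialgraph}. Your packing estimate for non-triviality and your greedy-factorisation-plus-pigeonhole argument for connectedness are different mechanisms, and to your credit they engage with real issues that the paper's short argument passes over --- above all the fact that a $v\to w$ path in $\mathbf{\Gamma}$ need not factor into stopping edges whose concatenation terminates at $w$.

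However, the proposal does not close the argument, and you say so yourself. Stochastic strong connectedness of $\mathbf{\Gamma}^{\epsilon}$ is the substance of the lemma, and your pigeonhole on cut positions is only a plan; it has a concrete failure mode. If every similitude has the same ratio $c$, then every stopping path has length exactly $k_{\max}(\epsilon)=\lceil\log\epsilon/\log c\rceil$, the greedy factorisation of any admissible path is cut at the rigid positions $0,k_{\max}(\epsilon),2k_{\max}(\epsilon),\dots$, and varying $N_{1},N_{2}$ only moves the landing vertex through the residues of $N_{1}\lvert L_{v}\rvert+\lvert P\rvert+N_{2}\lvert L_{w}\rvert$ modulo $k_{\max}(\epsilon)$. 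Nothing in the hypotheses forces these residues to hit $0$, and your fallback (``loops from a graph with a different cycle structure'') need not exist: take two vertices with every edge of every $\Gamma_{i}$ switching vertex and all ratios equal, which satisfies Conditions~\ref{nontrivialgraph}--\ref{contractingSSRGDS} yet, whenever $k_{\max}(\epsilon)$ is even, admits no stopping-edge path from one vertex to the other. So the phase-matching step cannot be completed by pigeonhole alone. The second point you flag --- that the stopping paths produced must lie in the pruned sets $E(\cdot,\epsilon)$ rather than merely in $E^{*}(\cdot,\epsilon)$ --- is likewise essential and is not settled by the packing estimate, since $U_{0}$ is a fixed maximal-cardinality family chosen in Definition~\ref{stoppingGraphDefn} and cannot simply be ``replaced'' by an equally large family containing the path you want. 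As written, the proposal is a careful analysis of where the difficulties lie rather than a proof of the lemma.
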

\begin{proof}
Assuming $\mathbf{\Gamma}$ is non-trivial implies that for every
$v\in V$ and $i\in\Lambda$ there exists at least one edge in $\tensor*[_{v}]{E}{}(i)$. However as the set $E(\omega,\epsilon)$ is chosen by non-overlapping images, for a path to be deleted there must be a second path, leaving at least one path. Hence $\lvert\tensor*[_{v}]{E}{}(\omega,\epsilon)\rvert\geq 1$ for all $v$ and $\omega$, i.e.\ $\mathbf{\Gamma}^{\epsilon}$ is non-trivial.
 
To show that $\mathbf{\Gamma}^{\epsilon}$ is stochastically strongly connected we note that the only possibility for a path that existed in $\mathbf{\Gamma}$ but not in  $\mathbf{\Gamma}^{\epsilon}$ is that it had been deleted due to overlapping images. However if $\epsilon$ is chosen small enough then there will be a different path that is being kept, unless all maps $S_{e}$ are identical. We however exclude this trivial case (Condition~\ref{nontrivialgraph}) as the attractor of such a system would be a singleton.
\end{proof}
We can partition the paths in $E(\omega,\epsilon)$ by initial and terminal vertex and path length and write $\tensor*[_{v}]{E}{_{w}^{k}}(\omega,\epsilon)$ to refer to paths $\mathbf{e}$ of length $k$ with $1\leq k\leq k_{\max}(\epsilon)$, $\iota(\mathbf{e})=v$ and $\tau(\mathbf{e})=w$. The set  $E(\omega,\epsilon)$ then consists of collections of paths whose images are disjoint under $S_\mathbf{e}$.

\subsection{Infinite random matrices}\label{infiniteMatrixSection}
Let $\omega\in\Omega$ be a word chosen randomly according to the Bernoulli measure $\mu$ associated with the probability vector ${\vec\pi}$, where $\pi_{i}>0$ for all $i\in\Lambda$. For all $i\in\{1,2,\hdots,l\}$ let ${t_{i}(\omega)}\in\mathcal M_{n\times n}(\R^{+}_{0})$. Letting $\mathbf{t}(\omega)=\{t_{1}(\omega),t_{2}(\omega),\hdots,t_{l}(\omega)\}$ we have a random vector with matrix valued entries.
Now define $\bf{T}(\omega)\in\mathfrak M_{\N,n}$ by
\[
\bf{T}(\omega)=
\begin{pmatrix}
t_{1}(\omega)	&	\mathbf{0}				&		\mathbf{0}				&	\mathbf{0}				&	\hdots	\\
t_{2}(\omega)	&	t_{1}(\sigma\omega)		&		\mathbf{0}\vphantom{\vdots}	&	\mathbf{0}				&	\hdots	\\
\vdots		&	t_{2}(\sigma\omega)		&		t_{1}(\sigma^{2}\omega)	&	\mathbf{0}				&	\hdots	\\
t_{l}(\omega)	&	\vdots				&		t_{2}(\sigma^{2}\omega)	&	t_{1}(\sigma^{3}\omega)	&	\ddots	\\
\mathbf{0}		&	t_{l}(\sigma\omega)		&		\vdots				&	t_{2}(\sigma^{3}\omega)	&			\\
\mathbf{0}		&	\mathbf{0}				&		t_{l}(\sigma^{2}\omega)	&	\vdots				&	\ddots	\\
\vdots		&	\mathbf{0}				&		\mathbf{0}				&	t_{l}(\sigma^{3}\omega)	&			\\
\vdots		&	\vdots				&		\vdots				&	\vdots				&	\ddots	\\
\end{pmatrix}^{\top}.
\]
The transpose in the definition above is solely to represent $\mathbf{T}$ in a more readable fashion. We also, as indicated in 
Definition~\ref{wordMultiplicationdef}, construct matrices consisting of collections of words. 
For the $1$-variable construction we need two different constructions: a finite and an infinite version corresponding to the $\epsilon$-stopping graph defined in Definition~\ref{stoppingGraphDefn}. 
We only give the infinite construction here as it is needed to state our main results. Since the finite version is only used in the proof of Theorem~\ref{lowerHausdorff} we postpone its definition until then.
Let $\mathbf{\Gamma}^{\epsilon}$ be given and consider the partition of edges of $\Gamma^{\epsilon}(i)$ into the sets $\tensor*[_{v}]{E}{^{k}_{w}}(\omega,\epsilon)$. We assign unique letters to each of the paths of $\mathbf{\Gamma}$ that are now the edges of the graphs $\mathbf{\Gamma}^{\epsilon}$.  For $V=\{1,\dots,n\}$, let  $\eta$ be a  $n\times n$ matrix over arrangements of words that are collections of these letters representing the edges. We let, for $1\leq q \leq k_{\max}(\epsilon)$, 
\[
\eta_{q}(\omega,\epsilon)=
\begin{pmatrix}
\bigoplus_{e\in (\tensor*[_{1}]{E}{^{q}_{1}}(\omega,\epsilon))}e	\vphantom{\vdots}&	\bigoplus_{e\in (\tensor*[_{1}]{E}{^{q}_{2}}(\omega,\epsilon))}e	&	\hdots	&	\bigoplus_{e\in (\tensor*[_{1}]{E}{^{q}_{n}}(\omega,\epsilon))}e\\
\bigoplus_{e\in (\tensor*[_{2}]{E}{^{q}_{1}}(\omega,\epsilon))}e	&	\ddots	&	&	\vdots\\
\vdots	&	&	\ddots	&	\vdots\\
\bigoplus_{e\in (\tensor*[_{n}]{E}{^{q}_{1}}(\omega,\epsilon))}e	&	\bigoplus_{e\in (\tensor*[_{n}]{E}{^{q}_{2}}(\omega,\epsilon))}e	&	\hdots	&	\bigoplus_{e\in (\tensor*[_{n}]{E}{^{q}_{n}}(\omega,\epsilon))}e
\end{pmatrix}
\]
We also need to refer to the two elements corresponding to the identity and zero matrix in this setting. Let $\mathbf{0}_{\varnothing}$ and $\mathbf{1}_{\epsilon_{0}}$ be $n\times n$ matrices such that
\[
(\mathbf{0}_{\varnothing})_{i,j}=\varnothing\;\;\text{ and }\;\; (\mathbf{1}_{\epsilon_{0}})_{i,j}=
\begin{cases}
\epsilon_{0}&\text{ if }i=j,\\
\varnothing & \text{ otherwise}.
\end{cases}
\]
Furthermore let $\widehat\eta^{i}(\omega,\epsilon)=\{\mathbf{0}_{\varnothing}, \dots,\mathbf{0}_{\varnothing}, \eta_{1}(\omega,\epsilon),\dots \eta_{k_{\max}(\epsilon)}(\omega,\epsilon),\mathbf{0}_{\varnothing},\mathbf{0}_{\varnothing},\dots\}$: the edges in the partitions arranged by length of original paths and prefixed by $i-1$ occurrences of $\mathbf{0}_{\varnothing}$. 
The matrix $\mathbf{H}^{\epsilon}(\omega)$ has row entries given by the vectors $\widehat\eta^{i}(\omega,\epsilon)$, in particular the $k$-th row of $\mathbf{H}^{\omega}(\epsilon)$ is $\widehat\eta^{k}(\sigma^{k-1}\omega,\epsilon)$ for $k\geq 0$:
\[
(\mathbf{H}^{\epsilon}(\omega))_{i,j}=(\widehat\eta^{i}(\sigma^{i-1}\omega,\epsilon))_{j}.
\]

We need the structure as described above to construct the words with the stopping graph. The original attractors to $\mathbf{\Gamma}$ do not require this structure as words are constructed by multiplying 
\[
\eta_{1}(\omega_{1},1)\eta_{1}(\omega_{2},1)\dots\eta_{k-1}(\omega_{k-1},1)
\] 
and then taking the union over each row. However, when taking the $\epsilon$-stopping graph for non-trivial $\epsilon$ we have the added complication that edges in $\mathbf{\Gamma}^{\epsilon}$ arise from paths of potentially different lengths in $\mathbf{\Gamma}$. This needs to be considered when applying another edge as it does not only need to start with the correct vertex (the terminal vertex of the previous edge), but also on the length of the equivalent path in $\mathbf{\Gamma}$ such that the edges of the correct graph are applied, namely for an edge of length $k$ at iteration step $i$, the graph with realisation $\sigma^{k+i+1}\omega$ has to be used.
Writing this in terms of matrix notation makes sense as the row a word sits in relates to how long the path was that created it, so that when multiplying with the next random matrix, the correct graph $\Gamma_{i}$ is applied.
It can help to visualise this construction of words in a layered iterative fashion, see Figure~\ref{layeredWords}. Given $\omega\in\Omega$ one starts with the identity empty word matrix $\mathbf{1}_{\epsilon_{0}}$ and applies the first set of matrices $\{\eta_{i}(\omega)\}$ to it to get a collection of $k_{\max}(\epsilon)$ entries (the second row in the figure). The next row is obtained by applying $\{\eta_{i}(\sigma\omega)\}$ to the collection of words in the first entry, $\{\eta_{i}(\sigma^{2}\omega)\}$ to the second, etc., taking $\oplus$ unions when necessary.
The $k$-th entry of the $i$-th row corresponds to the collection of words $(\mathds{1}_{\epsilon}\mathbf{H}^{\epsilon}(\omega)\dots\mathbf{H}^{\epsilon}(\sigma^{i-1}\omega))_{k}$, where the vector $\mathds{1}_{\epsilon}\mathbf{H}^{\epsilon}(\omega)\dots\mathbf{H}^{\epsilon}(\sigma^{i-1}\omega)$ is the $i$-th row.

\begin{figure}
\label{layeredWords}
\begin{tikzpicture}[x=0.95cm, y=0.95cm]
\node[state, scale=0.7] at (0,0) (00) {$(1,1)$};

\node[state, scale=0.7] at (0,-1.5) (01){$(2,2)$};
\node[state, scale=0.7] at (1,-1.5) (11){$(3,2)$}; 
\node[state, scale=0.7] at (2,-1.5) (21){$(4,2)$};

\node[state, scale=0.7] at (0,-3) (02){$(3,3)$};
\node[state, scale=0.7] at (1,-3) (12){$(4,3)$}; 
\node[state, scale=0.7] at (2,-3) (22){$(5,3)$};
\node[state, scale=0.7] at (3,-3) (32){$(6,3)$};
\node[state, scale=0.7] at (4,-3) (42){$(7,3)$};

\node[state, scale=0.7] at (0,-4.5) (03){$(4,4)$};
\node[state, scale=0.7] at (1,-4.5) (13){$(5,4)$}; 
\node[state, scale=0.7] at (2,-4.5) (23){$(6,4)$};
\node[state, scale=0.7] at (3,-4.5) (33){$(7,4)$};
\node[state, scale=0.7] at (4,-4.5) (43){$(8,5)$};
\node[state, scale=0.7] at (5,-4.5) (53){$(9,5)$};
\node[state, scale=0.7] at (6,-4.5) (63){$(10,5)$};

\node[state, scale=0.7] at (0,-6) (04){$(5,6)$};
\node[state, scale=0.7] at (1,-6) (14){$(6,6)$}; 
\node[state, scale=0.7] at (2,-6) (24){$(7,6)$};
\node[state, scale=0.7] at (3,-6) (34){$(8,6)$};
\node[state, scale=0.7] at (4,-6) (44){$(9,6)$};
\node[state, scale=0.7] at (5,-6) (54){$(10,6)$};
\node[state, scale=0.7] at (6,-6) (64){$(11,6)$};
\node[state, scale=0.7] at (7,-6) (74){$(12,6)$};
\node[state, scale=0.7] at (8,-6) (84){$(13,6)$};

\path[->,thick]
(00) edge node {}(01)
(01) edge node {}(02)
(11) edge node {}(12)
(21) edge node {}(22)
(02) edge node {}(03)
(12) edge node {}(13)
(22) edge node {}(23)
(32) edge node {}(33)
(42) edge node {}(43)
(03) edge node {}(04)
(13) edge node {}(14)
(23) edge node {}(24)
(33) edge node {}(34)
(43) edge node {}(44)
(53) edge node {}(54)
(63) edge node {}(64)
;

\path[->,thick, dashed]
(00) edge node {}(11)
(01) edge node {}(12)
(11) edge node {}(22)
(21) edge node {}(32)
(02) edge node {}(13)
(12) edge node {}(23)
(22) edge node {}(33)
(32) edge node {}(43)
(42) edge node {}(53)
(03) edge node {}(14)
(13) edge node {}(24)
(23) edge node {}(34)
(33) edge node {}(44)
(43) edge node {}(54)
(53) edge node {}(64)
(63) edge node {}(74)
;

\path[->,thick, dotted]
(00) edge node {}(21)
(01) edge node {}(22)
(11) edge node {}(32)
(21) edge node {}(42)
(02) edge node {}(23)
(12) edge node {}(33)
(22) edge node {}(43)
(32) edge node {}(53)
(42) edge node {}(63)
(03) edge node {}(24)
(13) edge node {}(34)
(23) edge node {}(44)
(33) edge node {}(54)
(43) edge node {}(64)
(53) edge node {}(74)
(63) edge node {}(84)
;

\node[state] at (7,-0.2) (A) {$(i_{0},k)$};
\node[state] at (7,-3.2) (B) {$(i_{0},k')$};
\node[state] at (8.5,-3.2) (C) {$(i_{1},k')$};
\node[state] at (12,-3.2) (D) {$(i_{2},k')$};

\path[->,thick]
(A) edge node [left] {$\eta_{1}(\sigma^{i_{0}-1}\omega,\epsilon)$} (B);

\path[->,thick, dashed]
(A) edge node [right, near end] {$\eta_{2}(\sigma^{i_{0}-1}\omega,\epsilon)$} (C);

\path[->,thick, dotted]
(A) edge node [right] {$\eta_{3}(\sigma^{i_{0}-1}\omega,\epsilon)$} (D);

\end{tikzpicture}
\caption{Layered construction of words for $k_{\max}(\epsilon)=3$.}
\end{figure}
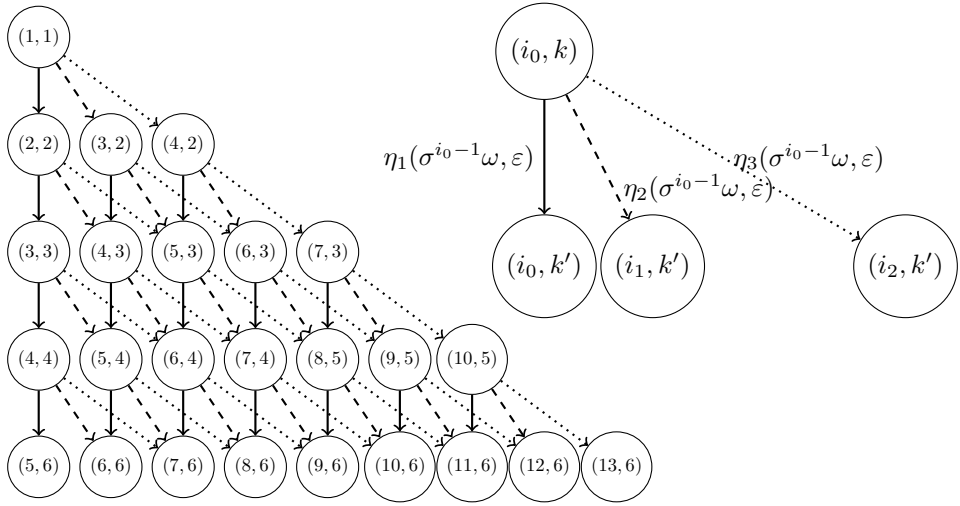

The last construction we shall require is a generalisation of the Hutchinson-Moran sum (see (\ref{HutchinsonMoranFormula})) to this infinite setting.
Let $\mathfrak{R}^{s}$, defined recursively, map matrices (or vectors) with entries being matrices over arrangements of words  into matrices (or vectors) with entries being matrices over real valued, non-negative functions, preserving the matrix (vector) structure.
\[
\epsilon_{0} \mapsto 1, \;\; \varnothing\mapsto0, \;\; \phi_{1}\mapsto c_{\phi_{1}}^{s}, \;\;  \phi_{1}\oplus\phi_{2}\mapsto c_{\phi_{1}}^{s}+c_{\phi_{2}}^{s},
\]
\[ \phi_{1}\odot\phi_{2}\mapsto c_{\phi_{1}}^{s}c_{\phi_{2}}^{s}=c_{\phi_{1}\phi_{2}}^{s},
\]
where $c_{\phi}$ is the contraction ratio of the similitude $S_{\phi}$.
We define $\mathbf{P}_{\epsilon}^{s}(\omega)=\mathfrak{R}^{s}(\mathbf{H}^{\epsilon}(\omega))$, that is the matrix consisting of rows 
\[
\mathbf{p}^{s}_{k}(\omega,\epsilon)=\{\mathbf{0},\dots,\mathbf{0},p^{s}_{1}(\omega,\epsilon),\dots,p_{l}^{s}(\omega,\epsilon),\mathbf{0},\dots\},
\] (c.f.\ $\widehat\eta^{i}(\omega,\epsilon)$) with 
\begin{equation}\label{HutchinsonMatrix}
p^{s}_{q}(\omega,\epsilon)=
\begin{pmatrix}
\sum_{e\in(\tensor*[_{1}]{E}{^{q}_{1}}(\omega,\epsilon))}c_{e}^{s}   &
  \sum_{e\in(\tensor*[_{1}]{E}{^{q}_{2}})(\omega,\epsilon)}c_{e}^{s}  &
    \hdots  &
     \sum_{e\in(\tensor*[_{1}]{E}{^{q}_{n}}(\omega,\epsilon))}c_{e}^{s}\\
\sum_{e\in(\tensor*[_{2}]{E}{_{1}^{q}}(\omega,\epsilon))}c_{e}^{s}  &
  \ddots  &
    &
      \vdots\\

\vdots &
  & 
   \ddots  &
     \vdots\\

\sum_{e\in(\tensor*[_{n}]{E}{^{q}_{1}}(\omega,\epsilon))}c_{e}^{s}   &
  \sum_{e\in(\tensor*[_{n}]{E}{^{q}_{2}}(\omega,\epsilon))}c_{e}^{s}  &
    \hdots  &
     \sum_{e\in(\tensor*[_{n}]{E}{^{q}_{n}}(\omega,\epsilon))}c_{e}^{s}

\end{pmatrix}.
\end{equation}

\subsection{Results for $1$-variable RGDS}
Having established the basic notation, in this section we collate all the important constructive lemmas and theorems. The proofs will be given in Section~\ref{proofsection}. We begin by stating that the norm $\vertiii{.}_{\sup}$ and seminorm $\vertiii{\mathds{1}.}$ expand almost surely at an exponential rate when multiplying the random matrices defined above; in other words the Lyapunov exponent exists.

\begin{lma}\label{convergenceLemma}
For $\mathbf{T}$ as above we have that
\begin{equation}\label{almostSureConvergenceLemmaEasy}
\lim_{k\to\infty}\vertiii{\mathbf{T}(\omega)\mathbf{T}(\sigma\omega)\dots \mathbf{ T}(\sigma^{k-2}\omega)\mathbf{T}(\sigma^{k-1}\omega)}_{\sup}^{1/k}=\alpha,
\end{equation}
where $\alpha=\inf_{k}\E^{\geo}(\vertiii{\mathbf{T}(\omega)\hdots\mathbf{T}(\sigma^{k-1}\omega)}_{\sup}^{1/k})$, for almost every $\omega\in\Omega$. If we use the seminorm defined in (\ref{infiniteSeminorm}),
almost surely,
\begin{equation}\label{almostSureConvergenceLemma}
\lim_{k\to\infty}\vertiii{\mathds{1}\mathbf{T}(\omega)\mathbf{T}(\sigma\omega)\hdots\mathbf{T}(\sigma^{k-2}\omega)\mathbf{T}(\sigma^{k-1}\omega)}^{1/k}=\beta,
\end{equation}
where $\beta\in[0,\infty)$ and $\mathds{1}=\{\mathbf{1},\mathbf{0},\mathbf{0},\hdots\}$. In particular,
\[\beta=\inf_{k}\vertiii{\mathds{1}\mathbf{T} 
(\omega)\hdots\mathbf{T}(\sigma^{k-1}\omega)}^{1/k}\text{ for a.e.\ $\omega$.}
\]
\end{lma}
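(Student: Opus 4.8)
The plan is to recognise both statements as instances of Kingman's subadditive ergodic theorem, applied to the measure-preserving system $(\Omega, \mu, \sigma)$, together with the submultiplicativity of the norms in question. First I would verify the cocycle/subadditivity structure. Writing $g_k(\omega) = \log \vertiii{\mathbf{T}(\omega)\mathbf{T}(\sigma\omega)\cdots\mathbf{T}(\sigma^{k-1}\omega)}_{\sup}$, the key algebraic fact is that $\vertiii{\cdot}_{\sup}$ is submultiplicative on the relevant subset of $\mathfrak{M}_{\N,n}$, i.e. $\vertiii{AB}_{\sup} \le \vertiii{A}_{\sup}\,\vertiii{B}_{\sup}$, which one checks directly from the definition $\vertiii{M}_{\sup} = \sup_{i}\sum_j \lVert M_{i,j}\rVert_{\mathrm{row}}$ using submultiplicativity of $\lVert\cdot\rVert_{\mathrm{row}}$ on $\mathcal{M}_{n\times n}(\R)$ and the fact that the infinite matrices here are upper triangular with uniformly bounded and absolutely summable rows (condition (\ref{eqn:maxithing})), so all the relevant sums converge and rearrange freely. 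Granting this, $g_{k+m}(\omega) \le g_k(\omega) + g_m(\sigma^k\omega)$, so $(g_k)$ is a subadditive cocycle.

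Next I would check the integrability hypothesis $g_1 \in L^1(\mu)$ (or at least $g_1^+ \in L^1$, which suffices for Kingman in the form giving an a.s.\ limit in $[-\infty,\infty)$). Since there are finitely many graphs $\Gamma_i$, finitely many edges in total, and each $S_e$ is a contracting similitude, the matrix entries of $\mathbf{T}(\omega)$ are bounded by a constant depending only on the finite data, hence $g_1$ is bounded, so certainly integrable. Kingman's theorem then gives that $\frac1k g_k(\omega) \to \inf_k \frac1k \E_\mu g_k =: \log\alpha$ almost surely, and since $\sigma$ is ergodic (Bernoulli) the limit is the deterministic constant $\log\alpha$; exponentiating gives (\ref{almostSureConvergenceLemmaEasy}). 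Fekete's lemma identifies $\inf_k \frac1k\E g_k$ with $\lim_k \frac1k\E g_k$, and because $\E g_k$ is subadditive in $k$ the infimum over $k$ of $\E^{\geo}\big(\vertiii{\cdots}_{\sup}^{1/k}\big) = \exp(\frac1k\E g_k)$ equals this same quantity, matching the stated formula for $\alpha$.

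For (\ref{almostSureConvergenceLemma}) the argument is the same with $h_k(\omega) = \log\vertiii{\mathds{1}\,\mathbf{T}(\omega)\cdots\mathbf{T}(\sigma^{k-1}\omega)}$. Here one uses that $\vertiii{\mathds{1} M}$ is a seminorm and that $\vertiii{\mathds{1}\, A B} \le \vertiii{\mathds{1} A}\,\vertiii{B}_{\sup} \le \vertiii{\mathds{1} A}\,C$ for a uniform constant $C$ bounding a single factor $\vertiii{\mathbf{T}(\cdot)}_{\sup}$; more usefully, $\vertiii{\mathds{1}\,A_1\cdots A_{k+m}} \le \vertiii{\mathds{1}\,A_1\cdots A_k}\cdot\vertiii{\mathds{1}\,A_{k+1}\cdots A_{k+m}}$ once one observes that multiplying the vector $\mathds{1} A_1\cdots A_k$ (a vector with nonnegative matrix entries summing to something controlled by $\vertiii{\cdot}$) on the right by $A_{k+1}\cdots A_{k+m}$ gives a sum bounded by $\vertiii{\mathds{1} A_1\cdots A_k}$ times the corresponding quantity for the tail block — this is precisely where the upper-triangular structure and the uniform row bound are used to make the bookkeeping work. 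Thus $(h_k)$ is again subadditive; it is bounded above (finitely many maps, contractions), giving $h_k^+\in L^1$, so Kingman applies and yields an a.s.\ constant limit $\log\beta \in [-\infty,\infty)$, hence $\beta \in [0,\infty)$; the infimum characterisation $\beta = \inf_k \vertiii{\mathds{1}\,\mathbf{T}(\omega)\cdots\mathbf{T}(\sigma^{k-1}\omega)}^{1/k}$ a.s.\ is the standard consequence of subadditivity (Fekete) transferred to a.e.\ $\omega$ via ergodicity.

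The main obstacle I expect is not the probabilistic input — Kingman plus ergodicity of a Bernoulli shift is routine — but rather the careful verification of submultiplicativity \emph{in this exotic setting of countably infinite upper-triangular matrices whose entries are themselves finite matrices}: one must confirm that all the infinite sums appearing in $\vertiii{\cdot}_{\sup}$ and $\vertiii{\mathds{1}\,\cdot\,}$ genuinely converge for products of the $\mathbf{T}(\sigma^j\omega)$, that $\mathfrak{M}_{\N,n}$ (and the relevant vectors) is closed under the multiplication used, and that the constraint (\ref{eqn:maxithing}) — uniformly bounded number of nonzero row-entries and uniformly bounded row sums — is preserved under multiplication, so that the cocycle is well-defined at every step. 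Once that structural lemma is in hand the rest is a direct appeal to the subadditive ergodic theorem.
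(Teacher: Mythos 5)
Your treatment of \eqref{almostSureConvergenceLemmaEasy} is correct and is exactly the paper's argument: the $\vertiii{\cdot}_{\sup}$ norm is genuinely submultiplicative, so $\log$ of the product norm is a subadditive cocycle, Kingman plus ergodicity of the Bernoulli shift gives the a.s.\ constant limit, and Fekete identifies it with the stated infimum.

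For \eqref{almostSureConvergenceLemma}, however, there is a genuine gap at the step where you assert plain subadditivity
$\vertiii{\mathds{1}\,A_1\cdots A_{k+m}}\le\vertiii{\mathds{1}\,A_1\cdots A_k}\,\vertiii{\mathds{1}\,A_{k+1}\cdots A_{k+m}}$ and claim the upper-triangular structure ``makes the bookkeeping work.'' It does not. Writing $\mathbf{u}_k(\omega)=\mathbf{T}(\omega)\cdots\mathbf{T}(\sigma^{k-1}\omega)$, the vector $\mathds{1}\mathbf{u}_n(\omega)$ has nonzero entries spread over positions $j=1,\dots,nl$, and because the row index of $\mathbf{T}$ encodes which shift of $\omega$ is applied, the $j$-th entry is multiplied on the right by blocks whose row sums equal $\lVert\mathds{1}\mathbf{u}_m(\sigma^{n+j-1}\omega)\rVert_{s}$ --- a \emph{different} shift for each $j$. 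The seminorm $\vertiii{\mathds{1}\mathbf{u}_m(\sigma^{n}\omega)}$ only sees the $j=1$ term, so the natural estimate is
$\vertiii{\mathds{1}\mathbf{u}_{n+m}(\omega)}\le\sum_{j}\lVert(\mathds{1}\mathbf{u}_n(\omega))_j\rVert_{\mathrm{row}}\,\vertiii{\mathds{1}\mathbf{u}_m(\sigma^{n+j}\omega)}$, which is not of the Kingman form $h_n(\omega)+h_m(\sigma^n\omega)$. (There is a second, smaller leak in the same step: $\sum_j\lVert(\mathds{1}\mathbf{u}_n)_j\rVert_{\mathrm{row}}$ exceeds $\lVert\sum_j(\mathds{1}\mathbf{u}_n)_j\rVert_{\mathrm{row}}=\vertiii{\mathds{1}\mathbf{u}_n}$ by a factor up to the number of nonzero entries, i.e.\ $O(n)$.) The paper resolves this by proving only \emph{quasi}-subadditivity, $\vertiii{\mathds{1}\mathbf{u}_{n+m}(\omega)}\le c\,m\,\vertiii{\mathds{1}\mathbf{u}_n(\omega)}\,\vertiii{\mathds{1}\mathbf{u}_m(\sigma^n\omega)}$, after arguing that the maximising shift $j_{\max}$ contributes only a bounded multiplicative error for large $n,m$; the resulting subadditive defect $\log(cm)$ satisfies $c_m/m\to0$, and one then invokes Derriennic's almost-subadditive ergodic theorem (Proposition~\ref{KingmanProp}) rather than Kingman. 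Your proposal needs either this quasi-subadditivity argument and the stronger ergodic theorem, or a genuine proof that the shift-dependence can be removed --- as written, the appeal to plain Kingman for the seminorm does not go through.
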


We  apply this result to the our RGDS setting and prove that the Lyapunov exponent is independent of the row of the resulting matrix, assuming $\mathbf{\Gamma}^{\epsilon}$ satisfies Condition~\ref{strongConnected}. We define the norm of matrix products in our setting.

\begin{defn}\label{pressureDefinition}
Let $\epsilon>0$ and define 
\[
\Psi^{k}_{\omega}(s,\epsilon)=\vertiii{
 \mathds{1}\mathbf{P}^{s}_{\epsilon}(\omega)\mathbf{P}^{s}_{\epsilon}(\sigma\omega)\hdots \mathbf{P}^{s}_{\epsilon}(\sigma^{k-1}\omega)}^{1/k}\text{ and }
 \mathbf{\Psi}_{\omega}(s,\epsilon)=\lim_{k\to\infty}\Psi^{k}_{\omega}(s,\epsilon).
 \]
 We call $\mathbf{\Psi}_{\omega}(s,\epsilon)$ the \emph{$(s,\epsilon)$-pressure of realisation $\omega$}, if the limit exists, and we write $\mathbf{\Psi}(s,\epsilon)=\E^{\geo}\mathbf{\Psi}_{\omega}(s,\epsilon)$ for the \emph{$(s,\epsilon)$-pressure}.
\end{defn}

We note at this point that the notion of pressure is usually applied to $\log \mathbf{\Psi}$. However, in the $1$-variable setting it is more natural to talk about Lyapunov exponents and multiplicativity, rather than additivity, and we take the liberty to call these quantities pressures, rather than the more appropriate `exponential of pressures'.

\begin{lma}\label{rowSumsAllTheSame}
Assume $\mathbf{\Gamma}^{\epsilon}$, together with a non-trivial probability vector $\vec\pi$, is a non-trivial collection of graphs that satisfies Condition~\ref{strongConnected}. 
The exponential expansion rate of the norm of the matrix is identical to the expansion rate of each individual row sum. We have, almost surely, for every $v\in V$ and $\epsilon>0$
\[
\lim_{k\to \infty}\left[\sum_{w\in V}\left(\lVert\mathds{1}\mathbf{P}^{s}_{\epsilon}(\omega)\mathbf{P}^{s}_{\epsilon}(\sigma\omega)\hdots \mathbf{P}^{s}_{\epsilon}(\sigma^{k-1}\omega)\rVert_{s}\right)_{v,w}\right]^{1/k}=\mathbf{\Psi}(s,\epsilon).
\]
\end{lma}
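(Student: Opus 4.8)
The plan is to reduce to Lemma~\ref{convergenceLemma} and then run a primitivity argument powered by Condition~\ref{strongConnected}. First I would observe that $\mathbf P^s_\epsilon(\omega)=\mathfrak R^s(\mathbf H^\epsilon(\omega))$ is exactly of the shape of the matrix $\mathbf T(\omega)$ of Section~\ref{infiniteMatrixSection}: its $k$-th row is $\mathbf p^s_k(\sigma^{k-1}\omega,\epsilon)=\{\mathbf 0,\dots,\mathbf 0,p^s_1(\sigma^{k-1}\omega,\epsilon),\dots,p^s_{k_{\max}(\epsilon)}(\sigma^{k-1}\omega,\epsilon),\mathbf 0,\dots\}$ with $k-1$ leading zero blocks, so it lies in $\mathfrak M_{\N,n}$ — each row has at most $k_{\max}(\epsilon)$ non-zero blocks, all entries are non-negative by~(\ref{HutchinsonMatrix}), and~(\ref{eqn:maxithing}) holds because each column carries at most $k_{\max}(\epsilon)$ non-zero blocks, each of uniformly bounded $\lVert\,\cdot\,\rVert_{\text{row}}$ (finitely many edges, ratios in $(0,1)$). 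Thus Lemma~\ref{convergenceLemma} applies with $t_i=p^s_i(\,\cdot\,,\epsilon)$ and $l=k_{\max}(\epsilon)$, giving $\Psi^k_\omega(s,\epsilon)\to\mathbf\Psi_\omega(s,\epsilon)=\beta$ almost surely (Definition~\ref{pressureDefinition}), where $\beta=\beta(s,\epsilon)\in[0,\infty)$ is deterministic; hence $\mathbf\Psi(s,\epsilon)=\E^{\geo}\mathbf\Psi_\omega(s,\epsilon)=\beta$, and, $\mu$ being $\sigma$-invariant, for a.e.\ $\omega$ one has $\vertiii{\mathds 1\mathbf P^s_\epsilon(\sigma^j\omega)\hdots\mathbf P^s_\epsilon(\sigma^{j+m-1}\omega)}^{1/m}\to\mathbf\Psi(s,\epsilon)$ simultaneously for all $j\ge0$. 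If $\mathbf\Psi(s,\epsilon)=0$ the statement is trivial, since every summand is non-negative and bounded above by the seminorm, so from now on assume $\mathbf\Psi(s,\epsilon)>0$.

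Write $\hat Q_k(\omega)=\lVert\mathds 1\mathbf P^s_\epsilon(\omega)\hdots\mathbf P^s_\epsilon(\sigma^{k-1}\omega)\rVert_s$, an $n\times n$ non-negative matrix, and let $a^{(v)}_k(\omega)=\sum_{w\in V}(\hat Q_k(\omega))_{v,w}$ be its $v$-th row sum. Then~(\ref{infiniteSeminorm}) gives $\vertiii{\mathds 1\mathbf P^s_\epsilon(\omega)\hdots\mathbf P^s_\epsilon(\sigma^{k-1}\omega)}=\lVert\hat Q_k(\omega)\rVert_{\text{row}}=\max_{v\in V}a^{(v)}_k(\omega)$, and, reading off the layered word construction (Figure~\ref{layeredWords} and Definition~\ref{stoppingGraphDefn}), $(\hat Q_k(\omega))_{v,w}$ is the sum of $c^s_{\mathbf e}$ over all paths $\mathbf e$ of $\mathbf\Gamma$ from $v$ to $w$ that decompose into exactly $k$ consecutive stopping-edges of $\Gamma^\epsilon(\omega),\Gamma^\epsilon(\sigma\omega),\dots$, the realisation being carried forward by the running $\mathbf\Gamma$-depth. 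The upper bound $\limsup_k a^{(v)}_k(\omega)^{1/k}\le\mathbf\Psi(s,\epsilon)$ for every $v$ is then immediate from $a^{(v)}_k(\omega)\le\max_u a^{(u)}_k(\omega)$.

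It remains to prove $\liminf_k a^{(v)}_k(\omega)^{1/k}\ge\mathbf\Psi(s,\epsilon)$ a.s.\ for each fixed $v$. Using Condition~\ref{strongConnected} for $\mathbf\Gamma^\epsilon$ and concatenating the connecting cylinders of the finitely many vertex pairs, I would build a finite word $z$ with $\mu([z])>0$, an integer $k_z$, a $\mathbf\Gamma$-length $\ell_z$ and $\delta>0$ so that for every $\omega\in[z]$ every vertex reaches every vertex by a $k_z$-fold stopping path of total $\mathbf\Gamma$-length $\ell_z$ and weight at least $\delta$. Then, bounding $a^{(v)}_K(\omega)$ below by the contribution of those $K$-fold stopping paths from $v$ that first follow a fixed free segment (available by non-triviality, Condition~\ref{nontrivialgraph}) reaching some vertex at a $\mathbf\Gamma$-depth $d$ with $\sigma^d\omega\in[z]$, then the universal block, then an arbitrary continuation, one gets for some $\delta'=\delta'(\omega)>0$
\[
a^{(v)}_{K}(\omega)\ \ge\ \delta'\sum_{u\in V}a^{(u)}_{K-\tau-k_z}\bigl(\sigma^{d+\ell_z}\omega\bigr)\ \ge\ \delta'\,\vertiii{\mathds 1\mathbf P^s_\epsilon(\sigma^{d+\ell_z}\omega)\hdots\mathbf P^s_\epsilon(\sigma^{d+\ell_z+K-\tau-k_z-1}\omega)},
\]
where $\tau$ is the number of stopping-edges of the free segment. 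Since $\tau<\infty$ a.s.\ — the depths reachable by free segments from $v$ form, for large round count, a cofinite subset of a residue class modulo the period of $\mathbf\Gamma^\epsilon$, which by mixing of $\sigma$ a.s.\ meets the positive-density set $\{d:\sigma^d\omega\in[z]\}$ — and since the convergence from the first paragraph holds for all shifts and a.e.\ $\omega$, dividing by $K$ and letting $K\to\infty$ yields $\liminf_K a^{(v)}_K(\omega)^{1/K}\ge\mathbf\Psi(s,\epsilon)$. Intersecting the finitely many almost sure events over $v\in V$ completes the proof.

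I expect the lower bound to be the main obstacle. In the classical fact that every entry of a primitive non-negative matrix grows at the joint rate, the primitive block is a fixed power of a fixed matrix; here the stopping-edges of $\mathbf\Gamma^\epsilon$ have variable $\mathbf\Gamma$-lengths, so a connecting block does not merely left-multiply the matrix product — it shifts the driving realisation by a path-dependent amount — and one must keep the depth-to-shift bookkeeping in the composition identity consistent and, crucially, verify that the depths $d$ with $\sigma^d\omega\in[z]$ really are hit as endpoints of free segments from $v$ (controlling the achievable depths via non-triviality and the period structure of $\mathbf\Gamma^\epsilon$, padding with short cycles if necessary), all while losing only sub-exponential factors. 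The reduction to Lemma~\ref{convergenceLemma} and the upper bound are routine by comparison.
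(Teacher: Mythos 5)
Your proposal is correct and follows essentially the same route as the paper: the upper bound is immediate because the seminorm is exactly the maximal row sum of $\lVert\mathds{1}\mathbf{P}^{s}_{\epsilon}(\omega)\hdots\mathbf{P}^{s}_{\epsilon}(\sigma^{k-1}\omega)\rVert_{s}$, and the lower bound uses Condition~\ref{strongConnected} to produce a positive-probability connecting word that routes any fixed row into the maximal one at an almost surely finite random depth, losing only sub-exponential factors. The only real difference is bookkeeping in the lower bound: the paper tracks repeated occurrences of the connecting word via i.i.d.\ holding times $H(i)$ and the law of large numbers ($W(N_k)/k\to1$, $H(k)/k\to0$), whereas you use a single occurrence together with the simultaneous-in-$j$ almost sure convergence of $\vertiii{\mathds{1}\mathbf{P}^{s}_{\epsilon}(\sigma^{j}\omega)\hdots\mathbf{P}^{s}_{\epsilon}(\sigma^{j+m-1}\omega)}^{1/m}$ -- a legitimate and arguably cleaner simplification, and the depth-reachability subtlety you flag is glossed over to a comparable degree in the paper's own argument.
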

%

\begin{lma}\label{almostSurePressureProperties}
For almost all $\omega$ we obtain $\mathbf{\Psi}(s,\epsilon)=\mathbf{\Psi}_{\omega}(s,\epsilon)$. Furthermore $\mathbf{\Psi}(s,\epsilon)$ is monotonically decreasing in $s$ and there exists a unique $s_{H,\epsilon}$ such that $\mathbf{\Psi}(s_{H,\epsilon},\epsilon)=1$.
\end{lma}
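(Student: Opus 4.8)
The plan is to treat the three assertions in turn, using Lemma~\ref{convergenceLemma} applied to the random matrices $\mathbf{P}^{s}_{\epsilon}$, the non-negativity of all matrices involved, and the uniform two-sided bound $c_{\mathbf{e}}\in(\epsilon c_{\min},\epsilon]$ on the contraction ratio of every edge $\mathbf{e}$ of the $\epsilon$-stopping graph $\mathbf{\Gamma}^{\epsilon}$ (the lower bound is forced by the defining inequality $c_{\mathbf{e}^{\ddagger}}>\epsilon$ of Definition~\ref{stoppingGraphDefn}, where $c_{\min}>0$ is the smallest single-edge ratio in $\mathbf{\Gamma}$).

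For the almost sure determinism of the pressure, I would first note that $\mathbf{P}^{s}_{\epsilon}(\omega)$ has exactly the banded, shift-indexed shape of the matrix $\mathbf{T}(\omega)$ of Section~\ref{infiniteMatrixSection}, namely with $t_{q}(\omega)=p^{s}_{q}(\omega,\epsilon)\in\mathcal{M}_{n\times n}(\R^{+}_{0})$ for $1\le q\le k_{\max}(\epsilon)$. Since $\mathbf{\Gamma}$ is a finite collection of finite graphs, for fixed $s$ and $\epsilon$ the entries of $\mathbf{P}^{s}_{\epsilon}(\omega)$ are bounded uniformly in $\omega$ and each row and column has at most $k_{\max}(\epsilon)$ non-zero entries, so $\mathbf{P}^{s}_{\epsilon}\in\mathfrak{M}_{\N,n}$ and Lemma~\ref{convergenceLemma} applies directly. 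It gives that $\mathbf{\Psi}_{\omega}(s,\epsilon)=\lim_{k}\Psi^{k}_{\omega}(s,\epsilon)$ exists and equals a deterministic constant $\beta=\beta(s,\epsilon)=\inf_{k}\Psi^{k}_{\omega}(s,\epsilon)$ for $\mu$-a.e.\ $\omega$. One checks $\beta\in(0,\infty)$: an upper bound is the maximal out-degree $D$ of the graphs $\Gamma^{\epsilon}(\omega)$ (by counting paths, which dominates every $s>0$), and a lower bound is $(\epsilon c_{\min})^{s}>0$, from the contribution of a single admissible concatenation of $k$ stopping-graph edges. Since the geometric expectation of an a.s.\ constant is that constant, $\mathbf{\Psi}(s,\epsilon)=\E^{\geo}\mathbf{\Psi}_{\omega}(s,\epsilon)=\beta=\mathbf{\Psi}_{\omega}(s,\epsilon)$ for a.e.\ $\omega$.

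For monotonicity and continuity, fix $0\le s<s'$. For each edge $\mathbf{e}$ of $\mathbf{\Gamma}^{\epsilon}$ the ratio bound yields $(\epsilon c_{\min})^{s'-s}c_{\mathbf{e}}^{s}\le c_{\mathbf{e}}^{s'}\le\epsilon^{s'-s}c_{\mathbf{e}}^{s}$, so, summing over edges, $(\epsilon c_{\min})^{s'-s}\mathbf{P}^{s}_{\epsilon}(\omega)\le\mathbf{P}^{s'}_{\epsilon}(\omega)\le\epsilon^{s'-s}\mathbf{P}^{s}_{\epsilon}(\omega)$ entrywise. Matrix multiplication and the seminorm $\vertiii{\mathds{1}\,\cdot\,}$ are monotone under the entrywise order on non-negative matrices, and the seminorm is positively homogeneous, so multiplying $k$ factors and taking the $k$-th root gives $(\epsilon c_{\min})^{s'-s}\Psi^{k}_{\omega}(s,\epsilon)\le\Psi^{k}_{\omega}(s',\epsilon)\le\epsilon^{s'-s}\Psi^{k}_{\omega}(s,\epsilon)$ for all $k$ and $\omega$; letting $k\to\infty$ (the limits exist a.s.\ by the previous step) and applying $\E^{\geo}$ yields $(\epsilon c_{\min})^{s'-s}\mathbf{\Psi}(s,\epsilon)\le\mathbf{\Psi}(s',\epsilon)\le\epsilon^{s'-s}\mathbf{\Psi}(s,\epsilon)$. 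As $0<\epsilon<1$, the right inequality forces $\mathbf{\Psi}(\cdot,\epsilon)$ to be strictly decreasing, and taking logarithms shows $s\mapsto\log\mathbf{\Psi}(s,\epsilon)$ is Lipschitz on $[0,\infty)$, hence $\mathbf{\Psi}(\cdot,\epsilon)$ is continuous and strictly positive there.

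Finally, for existence and uniqueness of $s_{H,\epsilon}$: uniqueness is immediate from strict monotonicity, and for existence I would apply the intermediate value theorem. As $s\to\infty$ we have $\mathbf{\Psi}(s,\epsilon)\le\epsilon^{s}\mathbf{\Psi}(0,\epsilon)\le\epsilon^{s}D\to0$. It remains to show $\mathbf{\Psi}(0,\epsilon)\ge1$; this uses that $\mathbf{\Gamma}^{\epsilon}$ is a non-trivial collection of graphs (established above), so every vertex has an outgoing edge in every $\Gamma^{\epsilon}(\omega)$ and hence no admissible path through the stopping graphs becomes extinct. Tracking the cumulative $\mathbf{\Gamma}$-length through the rows of $\mathbf{P}^{0}_{\epsilon}$, for every $k$ and every starting vertex there is at least one admissible length-$k$ concatenation of stopping-graph edges, so the corresponding row sum of $\mathds{1}\mathbf{P}^{0}_{\epsilon}(\omega)\mathbf{P}^{0}_{\epsilon}(\sigma\omega)\cdots\mathbf{P}^{0}_{\epsilon}(\sigma^{k-1}\omega)$ is at least $1$; thus $\Psi^{k}_{\omega}(0,\epsilon)\ge1$ for all $k$ and $\mathbf{\Psi}(0,\epsilon)=\mathbf{\Psi}_{\omega}(0,\epsilon)\ge1$ (one could instead appeal to Lemma~\ref{rowSumsAllTheSame}). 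A continuous, strictly decreasing function on $[0,\infty)$ with value at least $1$ at $0$ and limit $0$ at infinity attains the value $1$ at exactly one point, which is $s_{H,\epsilon}$. I expect the only genuine difficulty to be bookkeeping: checking that the shifted row/column indexing of $\mathbf{P}^{s}_{\epsilon}$ (recording that one stopping-graph edge is a $\mathbf{\Gamma}$-path of variable length, so that the graph $\Gamma_{\omega_{i}}$ is applied at the right step) is compatible both with the hypotheses of Lemma~\ref{convergenceLemma} and with the no-extinction argument; the remaining statements are routine consequences of Lemma~\ref{convergenceLemma}, non-negativity, and the ratio bound.
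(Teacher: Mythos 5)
Your proposal is correct and follows essentially the same route as the paper: Lemma~\ref{convergenceLemma} gives the almost sure constancy of $\mathbf{\Psi}_{\omega}(s,\epsilon)$, entrywise two-sided bounds comparing $\mathbf{P}^{s+\delta}_{\epsilon}$ with $\mathbf{P}^{s}_{\epsilon}$ give strict monotonicity and continuity, and non-extinction (so that $\mathbf{\Psi}(0,\epsilon)\geq1$) together with the decay as $s\to\infty$ and the intermediate value theorem gives existence and uniqueness of $s_{H,\epsilon}$. The only cosmetic difference is that you use the explicit bounds $\epsilon c_{\min}$ and $\epsilon$ where the paper uses $\underline\gamma=\min_{\mathbf{e}}c_{\mathbf{e}}$ and $\overline\gamma=\max_{\mathbf{e}}c_{\mathbf{e}}<1$; the latter is marginally safer since your upper bound only forces strict decrease when $\epsilon<1$.
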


For $s=0$ the pressure function is counting the number of cylinders in the construction. However, as we are considering a lower approximation consisting solely of cylinders with diameter comparable to $\epsilon$ we can find the box counting dimension of $K_{v}(\omega)$ by a supermultiplicative argument. 

\begin{theo}\label{upperBox}
Almost surely the box counting dimension of $K_{v}(\omega)$ exists, is almost surely independent of $v\in V$, and given by
\begin{equation}\label{upperBoxEq}
\dim_{B}K_{v}(\omega)=\lim_{\delta\to 0}\frac{\log\mathbf{\Psi}(0,\delta)}{-\log\delta}=\sup_{\epsilon>0}\frac{\log\mathbf{\Psi}(0,\epsilon)}{-\log\epsilon}.
\end{equation}
\end{theo}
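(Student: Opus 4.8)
The plan is to trap the covering function $N_\delta(K_v(\omega))$ between two quantities that both grow, as $\delta\to0$, at the rate governed by $\mathbf{\Psi}(0,\cdot)$. I fix $\epsilon>0$ (eventually taken small, so the earlier lemmas apply to $\mathbf{\Gamma}^\epsilon$), iterate the $\epsilon$-stopping graph $k$ times, and only at the end let $k\to\infty$ and then $\epsilon\to0$. The key input is Lemma~\ref{rowSumsAllTheSame} at exponent $s=0$: there $\mathfrak R^0$ sends each arrangement of words to its cardinality, so the $v$-th row-sum of $\mathds{1}\mathbf P^0_\epsilon(\omega)\mathbf P^0_\epsilon(\sigma\omega)\cdots\mathbf P^0_\epsilon(\sigma^{k-1}\omega)$ is precisely the number $M_k=M_k(\omega,\epsilon,v)$ of paths consisting of $k$ edges of $\Gamma^\epsilon(\omega),\dots,\Gamma^\epsilon(\sigma^{k-1}\omega)$ that start at $v$, and the lemma (with Lemma~\ref{almostSurePressureProperties}) gives $M_k^{1/k}\to\mathbf{\Psi}(0,\epsilon)$ almost surely, independently of $v$. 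Note $1\le\mathbf{\Psi}(0,\epsilon)$ and $\mathbf{\Psi}(0,\epsilon)$ is uniformly bounded, since the total number of cylinders of $\mathbf{\Gamma}$ at scale $\epsilon^k$ is at most exponential in $k\,k_{\max}(\epsilon)$.

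For the lower bound, the $M_k$ cylinders $S_\mathbf{e}(\Delta)$, where $\mathbf{e}$ ranges over length-$k$ paths of $\mathbf{\Gamma}^\epsilon$ from $v$, have pairwise disjoint interiors: at each level the stopping graph keeps only maps with disjoint $\Delta$-images, and disjointness of $S_\mathbf{a}(\Delta),S_\mathbf{b}(\Delta)$ is inherited by $S_{\mathbf{a}\mathbf{c}}(\Delta),S_{\mathbf{b}\mathbf{d}}(\Delta)$ for all extensions. Because for a fixed $\epsilon$ there are only finitely many possible graphs $\Gamma^\epsilon(\omega)$, the level-one images $\{S_\mathbf{e}(\Delta):\mathbf{e}\in E(\omega,\epsilon)\}$ have a strictly positive minimal mutual distance $\rho(\epsilon)$; splitting two distinct length-$k$ paths at their first disagreement and using that the common prefix is a similarity of ratio at least $(c_{\min}\epsilon)^{k-1}$ (here $c_{\min}=\min_e c_e$), the level-$k$ images are pairwise $\ge(c_{\min}\epsilon)^{k-1}\rho(\epsilon)$-separated. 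Each meets $K_v(\omega)$ because $S_\mathbf{e}(K_{\tau(\mathbf{e})}(\sigma^{|\mathbf{e}|}\omega))\subseteq K_v(\omega)$ is non-empty, so picking one point per cylinder gives $M_k$ points of $K_v(\omega)$ that are $(c_{\min}\epsilon)^{k-1}\rho(\epsilon)$-separated; hence $N_\delta(K_v(\omega))\ge M_k$ for $\delta=(c_{\min}\epsilon)^{k-1}\rho(\epsilon)/2$. Filling in intermediate scales by monotonicity of $\delta\mapsto N_\delta$ and letting $k\to\infty$ yields $\underline\dim_B K_v(\omega)\ge\frac{\log\mathbf{\Psi}(0,\epsilon)}{-\log(c_{\min}\epsilon)}$ almost surely.

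For the upper bound the crucial point is that $E(\omega,\epsilon)$ is a \emph{maximum}-cardinality disjoint subfamily of the family $E^{*}(\omega,\epsilon)$ of all $\epsilon$-cylinders; at the single scale $\epsilon$ only a uniformly bounded number $C=C(d,\Delta,c_{\min})$ of such cylinders can mutually overlap — each contains a ball of radius comparable to its diameter because $\Delta=\overline{\interior\Delta}$, and disjoint balls pack boundedly into a ball in $\R^d$ — so $|E^{*}(\omega,\epsilon)|\le C\,|E(\omega,\epsilon)|$, while $E^{*}(\omega,\epsilon)$ covers $K_v(\omega)$ by sets of diameter $\le\epsilon|\Delta|$. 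Propagating this one-step comparison through the levels of $\mathbf{\Gamma}^\epsilon$ gives $N_{\epsilon^k|\Delta|}(K_v(\omega))\lesssim C^k M_k$, hence $\overline\dim_B K_v(\omega)\le\frac{\log(C\,\mathbf{\Psi}(0,\epsilon))}{-\log\epsilon}$ for every $\epsilon$. Since $\mathbf{\Psi}(0,\epsilon)$ is bounded and $-\log(c_{\min}\epsilon)=-\log c_{\min}-\log\epsilon\sim-\log\epsilon$, the two estimates squeeze: $\limsup_{\epsilon\to0}\frac{\log\mathbf{\Psi}(0,\epsilon)}{-\log\epsilon}\le\underline\dim_B K_v(\omega)\le\overline\dim_B K_v(\omega)\le\liminf_{\epsilon\to0}\frac{\log\mathbf{\Psi}(0,\epsilon)}{-\log\epsilon}$, which forces $\dim_B K_v(\omega)$ to exist and equal $\lim_{\delta\to0}\frac{\log\mathbf{\Psi}(0,\delta)}{-\log\delta}$. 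The identification of this limit with the supremum in~(\ref{upperBoxEq}) (over the range of $\epsilon$ for which the stopping graph is admissible) is then a soft addendum, obtained from a supermultiplicativity/Fekete-type argument: refining a disjoint $\epsilon_1$-stopping family by disjoint $\epsilon_2$-stopping families makes $\log\mathbf{\Psi}(0,\cdot)$ superadditive in $-\log\epsilon$ up to a bounded error. Finally $v$-independence is already contained in Lemma~\ref{rowSumsAllTheSame}, and the almost sure statement follows by intersecting the countably many full-measure events from Lemmas~\ref{rowSumsAllTheSame} and~\ref{almostSurePressureProperties} over a sequence $\epsilon_m\to0$.

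The step I expect to be the main obstacle is the upper bound. With no separation assumed, the naive cover of $K_v(\omega)$ by all $\epsilon$-cylinders is controlled by a growth rate that may strictly exceed $\mathbf{\Psi}(0,\epsilon)$, so the whole saving must be extracted from the maximum-cardinality property of $E(\omega,\epsilon)$ and the bounded-overlap estimate — this is exactly where ``independence of overlap'' enters — and propagating it through the iteration is delicate precisely because the $\epsilon$-stopping cylinders do not share a single diameter but have diameters spread over $[(c_{\min}\epsilon)^k,\epsilon^k]\,|\Delta|$. That same spread is what introduces the factor $c_{\min}$ in the lower bound and is why the two sides agree only in the limit $\epsilon\to0$; the hypotheses $\Delta=\overline{\interior\Delta}$ (each cylinder contains a proportional ball) and the finiteness of $\mathbf{\Gamma}^\epsilon$ (so $\rho(\epsilon)>0$) are precisely what let both halves go through.
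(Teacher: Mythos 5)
Your lower bound is essentially correct and is the paper's supermultiplicativity argument in a different guise: the $M_k$ level-$k$ iterated stopping cylinders are pairwise disjoint, uniformly separated at scale $(c_{\min}\epsilon)^{k-1}\rho(\epsilon)$, each meets $K_v(\omega)$, and Lemma~\ref{rowSumsAllTheSame} at $s=0$ identifies the exponential growth rate of $M_k$ with $\mathbf{\Psi}(0,\epsilon)$ independently of $v$. The upper bound, however, contains a genuine gap, and it sits exactly where you predicted the main obstacle would be. The inequality $\lvert E^{*}(\omega,\epsilon)\rvert\le C\,\lvert E(\omega,\epsilon)\rvert$ with $C$ uniform is false in the absence of separation: if many distinct words $\mathbf{e}$ induce the same (or heavily overlapping) similarity $S_{\mathbf{e}}$ --- which is precisely the situation the theorem must cover --- then their images coincide, any disjoint subfamily retains only one of them, and $\lvert E^{*}\rvert/\lvert E\rvert$ can grow exponentially in $k_{\max}(\epsilon)$. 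Your packing justification fails because the inner balls of the cylinders in $E^{*}$ need not be disjoint, so they do not pack. The correct one-scale statement is weaker but sufficient: by maximality of $E(\omega,\epsilon)$, every deleted image $S_{\mathbf{f}}(\Delta)$ intersects some kept image $S_{\mathbf{e}}(\Delta)$ and hence lies in its $2\epsilon\lvert\Delta\rvert$-neighbourhood, so $K_v(\omega)$ is covered by the neighbourhoods of the $\lvert E(\omega,\epsilon)\rvert$ kept cylinders and $N_{\epsilon}(K_v(\omega))\lesssim\lvert E(\omega,\epsilon)\rvert$ even though $\lvert E^{*}(\omega,\epsilon)\rvert$ is uncontrolled; this is the comparison (\ref{boxasymp}) in the paper.

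Even granting that corrected one-step bound, the propagation $N_{\epsilon^{k}\lvert\Delta\rvert}(K_v(\omega))\lesssim C^{k}M_k$ does not follow. The $k$-fold iterated $\epsilon$-stopping cylinders cover only the approximation attractor $K_{v,\epsilon}(\omega)$, not $K_v(\omega)$: a point of $K_v(\omega)$ whose coding passes through a deleted word at level $j$ is merely within $O(\epsilon^{j})$ of a kept level-$j$ cylinder, and this error does not contract as you descend to level $k$, so at scale $\epsilon^{k}$ covering those $O(\epsilon^{j})$-neighbourhoods costs far more than a bounded multiple of $M_k$ sets. This is why the paper never iterates the $\epsilon$-stopping graph for the upper bound: it applies the one-scale comparison $N_\delta(K_v(\omega))\asymp\lvert E(\omega,\delta)\rvert$ afresh at every scale $\delta$, recomputing the maximal disjoint family from the original graph $\mathbf{\Gamma}$ so that it genuinely controls all of $K_v(\omega)$, and reserves the iteration for the supermultiplicative direction (\ref{submultEq}) feeding the lower bound and the Fekete-type limit. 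To repair your argument you would need to replace $C^{k}M_k$ by $\lvert E(\omega,\epsilon^{k})\rvert$ and then run the superadditivity argument you sketch as a ``soft addendum'' --- at which point you have reproduced the paper's proof.
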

Using the construction given in Section~\ref{wordConstructionSection} we define the $\epsilon$-approximation to our attractor. Note that this is not an $\epsilon$-close set in the sense of Hausdorff distance, but rather an attractor which satisfies the Uniform Strong Separation Condition (USSC) and approximates the attractor from the `inside out'. Compare this to the approximation of GDA by suitably chosen IFSs, see Farkas~\cite{Farkas15}.
\begin{defn}
We say that a graph directed attractor satisfies the \emph{Uniform Strong Separation Condition} (USSC) if for every $v\in V$, $\Gamma_{k}\in\mathbf{\Gamma}$, $\omega\in\Omega$ and $e_{i},e_{j}\in \tensor*[_{v}]{E}{}(k)$ we have 
\[
\text{if }S_{e_{i}}(K_{v}(\omega))\cap S_{e_{j}}(K_{v}(\omega))\neq\varnothing\;\text{, then } \;e_{i}= e_{j}.
\]
\end{defn}
\begin{defn}
The \emph{$\epsilon$-approximation attractor} $K_{v,\epsilon}(\omega)$ of $K_{v}(\omega)$ is defined to be the unique compact set that is the limit of words in the $\epsilon$-stopping graph $\mathbf{\Gamma}^{\epsilon}$:
\begin{multline}
K_{v,\epsilon}(\omega)=\lim_{k\to\infty}\bigcap_{i=1}^{k}\bigcup_{\substack{\mathbf{e}\in \Xi^{i}_{\epsilon}(\omega)\\ \iota(\mathbf{e})=v}}S_\mathbf{e}(\Delta),
\\\text{where }
\Xi^{i}_{\epsilon}(\omega)=\bigoplus \mathds{1}_{\epsilon_{0}}\mathbf{H}^{\epsilon}(\omega)\mathbf{H}^{\epsilon}(\sigma\omega)\dots \mathbf{H}^{\epsilon}(\sigma^{i-1}\omega).\nonumber
\end{multline}
\end{defn}

These sets are easily seen to be subsets of $K_{v}(\omega)$.
\begin{lma}\label{approxAreSubsets}
For every $\epsilon>0$ and $\omega\in\Omega$ we have $K_{v,\epsilon}(\omega)\subseteq K_{v}(\omega)$. If $K_{v}(\omega)$ satisfies the USSC, then $K_{v,\epsilon}(\omega)= K_{v}(\omega)$.
\end{lma}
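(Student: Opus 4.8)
The plan is to establish the two assertions separately. For the inclusion $K_{v,\epsilon}(\omega)\subseteq K_{v}(\omega)$, the key observation is that every edge $\mathbf{e}$ of the $\epsilon$-stopping graph $\mathbf{\Gamma}^{\epsilon}$ is, by construction (Definition \ref{stoppingGraphDefn}), a genuine finite path in the original collection $\mathbf{\Gamma}$ for the same realisation $\omega$, with the length bookkeeping encoded by the row structure of $\mathbf{H}^{\epsilon}(\omega)$ so that applying $\mathbf{H}^{\epsilon}(\sigma^{i-1}\omega)$ at step $i$ concatenates paths that respect which graph $\Gamma_{\omega_q}$ is traversed at each step $q$. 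Concretely, I would argue that for each $k$ the finite union $\bigcup_{\mathbf{e}\in\Xi^{k}_{\epsilon}(\omega),\,\iota(\mathbf{e})=v}S_{\mathbf{e}}(\Delta)$ is a subset of the corresponding $l$-th level approximation $\bigcup_{u\in V}\bigcup_{\mathbf{f}\in\tensor*[_{v}]{E}{^{l}_{u}}(\omega)}S_{\mathbf{f}}(\Delta)$ of $K_v(\omega)$, where $l$ is large enough that every path in $\Xi^{k}_{\epsilon}(\omega)$ of admissible total length is a prefix of some path of length $l$ in $\mathbf{\Gamma}$; since $S_{e}(\Delta)\subset\Delta$ for all edges $e$, extending a path only shrinks its image, so $S_{\mathbf{e}}(\Delta)\supseteq S_{\mathbf{e}\mathbf{g}}(\Delta)$ and taking unions over the completions $\mathbf{g}$ keeps us inside the level-$l$ set. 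Passing to the intersection over $k$ gives $K_{v,\epsilon}(\omega)\subseteq\bigcap_{l}\bigcup_u\bigcup_{\mathbf f}S_{\mathbf f}(\Delta)=K_v(\omega)$. Both sides are compact limits of decreasing compact sets, so the limit in the definition of $K_{v,\epsilon}(\omega)$ exists and the inclusion is preserved.

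For the reverse inclusion under the USSC, the point is that when the separation condition holds no paths are ever discarded in forming the $\epsilon$-stopping graph. Recall that $E(\omega,\epsilon)$ was defined as a maximal subset of $E^{*}(\omega,\epsilon)$ whose images $S_{\mathbf e}(\Delta)$ are pairwise disjoint; a priori this can be a proper subset of $E^{*}(\omega,\epsilon)$ when images overlap. The USSC says precisely that for distinct edges $e_i,e_j\in\tensor*[_v]{E}{}(k)$ the images $S_{e_i}(K_v(\omega))$ and $S_{e_j}(K_v(\omega))$ are disjoint, and iterating this along paths of the stopping graph (using that $S_{\mathbf e}(K_{\tau(\mathbf e)}(\cdot))\subseteq S_{\mathbf e}(\Delta)$ together with the hierarchical structure) gives that all the stopped cylinders intersect $K_v(\omega)$ in disjoint pieces. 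One must be slightly careful: the disjointness in Definition \ref{stoppingGraphDefn} was phrased in terms of $S_{\mathbf e}(\Delta)$, not $S_{\mathbf e}(K_v(\omega))$, so under USSC I would instead observe that discarding a path $\mathbf e$ from $E^*(\omega,\epsilon)$ would lose the nonempty compact piece $S_{\mathbf e}(K_{\tau(\mathbf e)}(\omega))\subseteq K_v(\omega)$, contradicting $K_{v,\epsilon}(\omega)\supseteq$ that piece being needed to recover all of $K_v(\omega)$. More cleanly: since every point $x\in K_v(\omega)$ lies in a nested sequence of cylinders $S_{\mathbf f}(\Delta)$ with $|\mathbf f|\to\infty$, and for $\epsilon$ fixed exactly one stopped path $\mathbf e\in E^*(\omega,\epsilon)$ is a prefix of a cofinal subsequence of this chain, the USSC guarantees this $\mathbf e$ cannot have been the one deleted (its image meets $K_v(\omega)$, hence meets the attractor in a set that no other stopped cylinder's image covers), so $x\in K_{v,\epsilon}(\omega)$.

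I expect the main obstacle to be the careful reconciliation of the two notions of ``disjointness'' — the $\Delta$-disjointness used to \emph{define} the maximal stopping set $E(\omega,\epsilon)$ versus the attractor-disjointness supplied by the USSC — and in particular verifying that under the USSC the maximal $\Delta$-disjoint subset $E(\omega,\epsilon)$ in fact equals the whole of $E^*(\omega,\epsilon)$, or at least contains a prefix of every infinite admissible path, so that no point of $K_v(\omega)$ is lost. The cleanest route is probably to show directly that under USSC the family $\{S_{\mathbf e}(\Delta):\mathbf e\in E^*(\omega,\epsilon)\}$ already has pairwise disjoint \emph{interiors} and that any overlap on boundaries is irrelevant to the attractor (since $\Delta=\overline{\interior\Delta}$ and the attractor sits well inside), whence $E(\omega,\epsilon)=E^*(\omega,\epsilon)$ and the $\epsilon$-approximation reproduces the standard level-set approximation of $K_v(\omega)$ exactly; then the equality $K_{v,\epsilon}(\omega)=K_v(\omega)$ follows from the definition of $K_v(\omega)$ as the same intersection. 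The first inclusion is routine; this second part is where the real content lies.
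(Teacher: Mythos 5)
Your first inclusion follows the same route as the paper (each stopping-graph edge is a genuine finite path in $\mathbf{\Gamma}$ for the same realisation, so concatenating gives codings of points of $K_{v}(\omega)$), and the conclusion is correct, but one step is stated the wrong way round: you write that since $S_{\mathbf{e}}(\Delta)\supseteq S_{\mathbf{e}\mathbf{g}}(\Delta)$, "taking unions over the completions keeps us inside the level-$l$ set." That containment puts the \emph{extensions} inside the stopped cylinder, not the stopped cylinder inside the union of its extensions, so you cannot compare the level-$k$ stopping approximation with a \emph{longer} level of the original construction. The fix is to truncate rather than extend: take $l$ to be the \emph{minimum} total $\mathbf{\Gamma}$-length of paths in $\Xi^{k}_{\epsilon}(\omega)$, so that each $S_{\mathbf{e}}(\Delta)\subseteq S_{\mathbf{e}|_{l}}(\Delta)$ and the level-$k$ stopping union sits inside the level-$l$ original union, with $l\to\infty$ as $k\to\infty$; alternatively, argue pointwise via codings as the paper does.

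For the equality under the USSC you again take the paper's route (USSC $\Rightarrow$ no path is deleted in forming $E(\omega,\epsilon)$ $\Rightarrow$ every path of $\mathbf{\Gamma}$ survives into $\mathbf{\Gamma}^{\epsilon}$), and you correctly flag the mismatch that the paper's own proof silently elides: Definition~\ref{stoppingGraphDefn} deletes paths according to overlaps of the sets $S_{\mathbf{e}}(\Delta)$, whereas the USSC only controls overlaps of the attractor pieces $S_{\mathbf{e}}(K_{\tau(\mathbf{e})}(\omega))$. However, neither of your proposed patches closes this. The USSC does not imply that the family $\{S_{\mathbf{e}}(\Delta):\mathbf{e}\in E^{*}(\omega,\epsilon)\}$ has pairwise disjoint interiors --- the seed set $\Delta$ is only required to satisfy $S_{e}(\Delta)\subset\Delta$ and can be much larger than the attractor, so its images may overlap substantially even when the attractor pieces are disjoint. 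And your "cleaner" argument, that the stopped prefix of a point $x\in K_{v}(\omega)$ "cannot have been the one deleted because its image meets $K_{v}(\omega)$," misreads the deletion rule: maximality of $E(\omega,\epsilon)$ is tested against $\Delta$-overlaps only and is blind to whether an image meets the attractor, so a path carrying a nonempty (and, under USSC, irrecoverable) piece of $K_{v}(\omega)$ could in principle still be discarded. A genuine repair requires either strengthening the separation hypothesis to the images of $\Delta$, or redefining $E(\omega,\epsilon)$ using disjointness of the sets $S_{\mathbf{e}}(K_{\tau(\mathbf{e})}(\omega))$; since the paper's proof asserts "no paths get deleted" with exactly the same conflation, your write-up is essentially the paper's argument plus an honest acknowledgement of the gap, but not yet a proof of the second assertion.
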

\begin{proof}
Note that points in the attractor of $K_{v,\epsilon}(\omega)$ have (unique) coding given by edges of graphs $\Gamma^{\epsilon}_{i}$ in $E(\omega,\epsilon)$. To prove the first claim we observe that for every symbol $e_{i}$ in the coding of $x=(e_{1},e_{2},\dots)\in K_{v,\epsilon}(\omega)$ we have an equivalent path travelling through $\mathbf{\Gamma}$. Starting at the first edge we have
$e_{1}\in E^{q_{1}}(\omega,\epsilon)$ for some $q_{1}$. This means that $e_{1}=\hat e^{1}_{1} \hat e^{1}_{2}\dots \hat e^{1}_{q_{1}}$ for $\hat e^{1}_{j}\in E(\omega_{j})$ such that $\tau (\hat e_{j})=\iota (\hat e_{j+1})$. 
Furthermore $e_{2}\in E^{q_{2}}(\sigma^{q_{1}}\omega,\epsilon)$ and so $e_{2}=\hat e^{2}_{1} \hat e^{2}_{2}\dots \hat e^{2}_{q_{2}}$ for $\hat e^{2}_{j}\in E((\sigma^{q_{1}}\omega)_{j})$ for a similarly linked sequence of edges.
Inductively we can replace every edge in $x$ by a finite path in the appropriate manner, giving a coding of a point in $K_{v}(\omega)$ and thus $K_{v,\epsilon}(\omega)\subseteq K_{v}(\omega)$.

Now assume that the maps of $\mathbf{\Gamma}$ satisfy the USSC; for all $v\in V$ and $i\in\Lambda$, every $e_{1},e_{2}\in \tensor*[_{v}]{E}{}(i)$ satisfy $S_{e_{1}}(K_{\tau(e_{1})}(\omega))\cap S_{e_{2}}(K_{\tau(e_{2})}(\omega))=\varnothing$. But then for all $j\in\Lambda$,  $e_{11}\in\tensor*[_{w_{1}}]{E}{}(j)$ and $e_{21}\in \tensor*[_{w_{2}}]{E}{}(j)$, where $w_{1}=\tau(e_{1})$ and $w_{2}=\tau(e_{2})$, we have $S_{e_{1}e_{11}}(K_{\tau(e_{11})}(\omega))\cap S_{e_{2}e_{21}}(K_{\tau(e_{21})}(\omega)) =\varnothing $. Inductively none of the compositions overlap. But this means that every path traversing through $\mathbf{\Gamma}$ must also have an equivalent path traversing through $\mathbf{\Gamma}^{\epsilon}$ as no paths get deleted due to the non-existent overlaps. Hence, assuming the USSC, $K_{v}(\omega)\subseteq K_{v,\epsilon}(\omega)$.
\end{proof}

Having established the almost sure box counting dimension we now consider the Hausdorff dimensions of our approximation sets. These are given by the unique $s$ such that the pressure defined in~(\ref{pressureDefinition}) equals $1$ and form a lower bound of the Hausdorff dimension of $K_{v}(\omega)$.

\begin{theo}\label{lowerHausdorff}
For all $\epsilon>0$ the almost sure Hausdorff dimension of $K_{v,\epsilon}(\omega)$ is independent of $v\in V$ and 
\[
\dim_{H}K_{v,\epsilon}(\omega) = s_{H,\epsilon}\text{ where }\mathbf{\Psi}(s_{H,\epsilon},\epsilon)=1,
\]
where $s_{H,\epsilon}$ is given by Lemma~\ref{almostSurePressureProperties}.
\end{theo}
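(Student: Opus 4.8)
The plan is to establish the claimed equality by proving the two inequalities $\dim_H K_{v,\epsilon}(\omega) \le s_{H,\epsilon}$ and $\dim_H K_{v,\epsilon}(\omega) \ge s_{H,\epsilon}$ separately, using the $\epsilon$-stopping graph structure to reduce to a graph-directed construction satisfying a strong separation condition (since distinct edges of $\mathbf{\Gamma}^\epsilon$ have disjoint $\Delta$-images by construction). The key point throughout is that the pressure $\mathbf{\Psi}(s,\epsilon)$, via Lemma~\ref{rowSumsAllTheSame}, governs the exponential growth rate of every individual row sum of the Hutchinson–Moran matrix products $\mathbf{P}^s_\epsilon(\omega)\mathbf{P}^s_\epsilon(\sigma\omega)\cdots$, and that by Lemma~\ref{almostSurePressureProperties} the value $s_{H,\epsilon}$ is the unique zero of $s\mapsto \mathbf{\Psi}(s,\epsilon)-1$, with $\mathbf{\Psi}$ almost surely deterministic.

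For the \emph{upper bound}, I would use the natural covers of $K_{v,\epsilon}(\omega)$ by the cylinder sets $S_{\mathbf e}(\Delta)$ with $\mathbf e$ ranging over the arrangements in $\Xi^k_\epsilon(\omega)$ starting at $v$. For $s > s_{H,\epsilon}$ we have $\mathbf{\Psi}(s,\epsilon)<1$, so $\Psi^k_\omega(s,\epsilon)^k \to 0$ and in fact decays exponentially; hence $\sum_{\mathbf e} c_{\mathbf e}^s |\Delta|^s \le |\Delta|^s \big(\mathds{1}\mathbf{P}^s_\epsilon(\omega)\cdots\mathbf{P}^s_\epsilon(\sigma^{k-1}\omega)\big)_{v}$ summed over terminal vertices tends to $0$ as $k\to\infty$. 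Since the diameters $|S_{\mathbf e}(\Delta)| = c_{\mathbf e}|\Delta|$ of these cylinders are at most $\epsilon|\Delta|$ at level $1$ and shrink uniformly (bounded by $\epsilon^k|\Delta|$ at level $k$ times a constant, using $c_{\mathbf e}\le\epsilon$ for each stopping-graph edge), this is a valid sequence of covers with mesh $\to 0$, giving $\Haus^s(K_{v,\epsilon}(\omega)) = 0$ and hence $\dim_H K_{v,\epsilon}(\omega)\le s$. Letting $s\downarrow s_{H,\epsilon}$ gives the upper bound almost surely, with independence of $v$ following from Lemma~\ref{rowSumsAllTheSame}.

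For the \emph{lower bound} I would construct a probability measure on $K_{v,\epsilon}(\omega)$ and apply the mass distribution principle. Here one needs the \emph{finite} matrix version of the construction alluded to after Definition~\ref{stoppingGraphDefn} (the one the excerpt postpones): at each scale one subdivides according to a finitely-supported step of the stopping graph, distributing mass among cylinders proportionally to $c_{\mathbf e}^{s_{H,\epsilon}}$ normalised by the appropriate row sum, so that a cylinder $S_{\mathbf e}(\Delta)$ of generation $k$ receives mass comparable (up to the row-dependence controlled by Lemma~\ref{rowSumsAllTheSame}, which gives uniform constants over $v\in V$ almost surely) to $c_{\mathbf e}^{s_{H,\epsilon}}$. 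One then checks, for a ball $B(x,r)$, that the number of generation-$k(r)$ cylinders it meets is uniformly bounded — this is where the strong separation built into $\mathbf{\Gamma}^\epsilon$ (disjoint $\Delta$-images, $\Delta = \overline{\interior\Delta}$, so bounded overlap at each scale by a volume/packing argument) is essential — and that $c_{\mathbf e}\asymp r$ for the relevant generation, yielding $\mu(B(x,r))\lesssim r^{s_{H,\epsilon}}$. The mass distribution principle then gives $\dim_H K_{v,\epsilon}(\omega)\ge s_{H,\epsilon}$.

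The main obstacle is the lower bound: making the measure construction rigorous in the random \emph{graph-directed} setting with variable path-lengths in the stopping graph requires care, because the mass assigned must be normalised using row sums of random matrix products whose fluctuations one controls only almost surely and only up to subexponential factors (Lemma~\ref{convergenceLemma}), so the "comparable to $c_{\mathbf e}^{s_{H,\epsilon}}$" estimate holds with an error that is subexponential in $k$ but must be shown to be negligible when comparing to $r^{s_{H,\epsilon}}$. I expect this to be handled by an Egorov-type argument: outside an event of small measure one gets uniform subexponential control, and since $\dim_H$ is insensitive to removing a positively-parametrised family one concludes the bound for a.e.\ $\omega$; alternatively one invokes the Kingman subadditive ergodic theorem directly to get the clean exponential rate $\mathbf{\Psi}(s_{H,\epsilon},\epsilon)=1$ along a subsequence and interpolates. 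The $v$-independence is then inherited from the $v$-independence of $\mathbf{\Psi}$ in Lemma~\ref{rowSumsAllTheSame}.
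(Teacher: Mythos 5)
Your proposal follows essentially the same route as the paper: the upper bound by covering with stopping-graph cylinders (which the paper treats as standard), and the lower bound by reducing to a finite graph-directed model built from $\mathbf{\Gamma}^{\epsilon}$, distributing mass on cylinders proportionally to $c_{\mathbf{e}}^{s}$ normalised by row sums of the matrix products, and invoking the mass distribution principle together with the disjointness of images built into the stopping graph. One step you elide, however, is genuinely needed and is where the paper does most of its work: the row sums you use to normalise the measure come from the \emph{finite} active/holding-state model (the paper's matrices $\mathbf{C}_{\epsilon}$, $\mathbf{W}^{s}_{\epsilon}$ and the quantity $\mathbf{\Phi}_{\epsilon}(s)$), not from the infinite upper-triangular products defining $\mathbf{\Psi}(s,\epsilon)$, and these two pressures do not coincide in general. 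The paper proves an intermediate lemma that $\mathbf{\Phi}_{\epsilon}(s)=1$ if and only if $\mathbf{\Psi}(s,\epsilon)=1$, by sandwiching the finite-model sums between off-diagonal partial sums of the infinite products (inclusion~(\ref{complicatedInclusion})); without this, your assertion that Lemma~\ref{rowSumsAllTheSame} controls ``the appropriate row sum'' in the measure construction at the exponent $s_{H,\epsilon}$ determined by $\mathbf{\Psi}$ is unjustified. Finally, the subexponential-fluctuation issue you flag as the main obstacle is handled more simply than your Egorov suggestion: the paper constructs the measure at each $t<s_{H,\epsilon}$, where the normalising denominator grows exponentially and absorbs all subexponential errors into an almost surely finite random constant $C^{\dagger}(\omega)$, and then lets $t\uparrow s_{H,\epsilon}$.
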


We get the following important corollary to Lemma~\ref{approxAreSubsets} and Theorem~\ref{lowerHausdorff}.
\begin{cor}
The Hausdorff dimension of the attractor of the $1$-variable self-similar RGDS is, almost surely, bounded below by $s_{H,\epsilon}$ for all $\epsilon>0$
\[
\dim_{H}K_{v}(\omega)\geq \dim_{H}K_{v,\epsilon}(\omega)=s_{H,\epsilon}.
\]
\end{cor}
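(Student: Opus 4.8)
The plan is simply to chain together the two results just established, since this is a direct corollary. First I would invoke Lemma~\ref{approxAreSubsets}, which gives the \emph{pathwise} inclusion $K_{v,\epsilon}(\omega)\subseteq K_{v}(\omega)$ for every $\epsilon>0$, every $v\in V$ and every realisation $\omega\in\Omega$, with no exceptional set. Because Hausdorff dimension is monotone under set inclusion — $A\subseteq B$ implies $\dim_{H}A\leq\dim_{H}B$ — this yields, for every $\omega$, the inequality $\dim_{H}K_{v}(\omega)\geq\dim_{H}K_{v,\epsilon}(\omega)$.

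Next I would apply Theorem~\ref{lowerHausdorff}: for each fixed $\epsilon>0$ there is a full-measure set of $\omega$ on which $\dim_{H}K_{v,\epsilon}(\omega)=s_{H,\epsilon}$, where $s_{H,\epsilon}$ is the unique zero-of-pressure value furnished by Lemma~\ref{almostSurePressureProperties} and is in particular independent of $v\in V$. Combining the two statements, for each fixed $\epsilon>0$ we obtain, almost surely, $\dim_{H}K_{v}(\omega)\geq s_{H,\epsilon}$, which is precisely the claimed bound.

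The only point deserving a word of care is the quantifier ``for all $\epsilon>0$'': the null set produced by Theorem~\ref{lowerHausdorff} a priori depends on $\epsilon$. To obtain a single full-measure event valid for all $\epsilon$ simultaneously one intersects the countably many exceptional null sets attached to a sequence $\epsilon_{n}\downarrow 0$; on the resulting full-measure event $\dim_{H}K_{v}(\omega)\geq s_{H,\epsilon_{n}}$ for every $n$, hence $\dim_{H}K_{v}(\omega)\geq\sup_{n}s_{H,\epsilon_{n}}$, and passing to the supremum over the sequence recovers the bound for all $\epsilon>0$ (since the values $s_{H,\epsilon}$ for arbitrary $\epsilon$ are controlled by those along the sequence). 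There is no genuine obstacle here: the substantive work is entirely contained in Theorem~\ref{lowerHausdorff}, and the corollary is obtained from it by the elementary monotonicity of Hausdorff dimension together with the inclusion of Lemma~\ref{approxAreSubsets}.
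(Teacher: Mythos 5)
Your proposal is correct and matches the paper's intent exactly: the paper states this as an immediate corollary of Lemma~\ref{approxAreSubsets} and Theorem~\ref{lowerHausdorff} and gives no further argument, relying precisely on the pathwise inclusion plus monotonicity of Hausdorff dimension. Your extra remark about intersecting the countably many exceptional null sets over a sequence $\epsilon_{n}\downarrow 0$ is a sensible refinement of the quantifier handling, but it does not change the substance of the argument.
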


Our main result is the almost sure equality of Hausdorff, box-counting and therefore also packing dimension, of $K_{v}(\omega)$ for all $v\in V$.

\begin{theo}[Main Theorem]\label{dimensionEquality}
Let $\mathbf{\Gamma}$ be a non-trivial, stochastically strongly connected collection of graphs with associated self-similar attractors $\{K_v\}_{v\in V}$. Then $s_{H,\epsilon}\to s_{B}$ as $\epsilon\to 0$, where 
\[
s_{B}=\lim_{\epsilon\to0}\frac{\log\mathbf{\Psi}(0,\epsilon)}{-\log\epsilon}
\]
 and hence, almost surely, 
\[
\dim_{H}K_{v}(\omega)=\dim_{P}K_{v}(\omega)=\dim_{B}K_{v}(\omega)=s_{B},
\]
where $s_{B}$ is independent of $v$.
\end{theo}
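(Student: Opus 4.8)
The plan is to pin down the Hausdorff exponent $s_{H,\epsilon}$ of the $\epsilon$-approximation by comparing the $(s,\epsilon)$-pressure $\mathbf\Psi(s,\epsilon)$ with the counting pressure $\mathbf\Psi(0,\epsilon)$, and then to squeeze $\dim_{H}K_{v}(\omega)$ between $\lim_{\epsilon\to0}s_{H,\epsilon}$ and the box dimension. The point is that every edge of the stopping graph $\mathbf\Gamma^{\epsilon}$ is, by Definition~\ref{stoppingGraphDefn}, a path $\mathbf e$ in $\mathbf\Gamma$ with $c_{\min}\epsilon<c_{\mathbf e}\le\epsilon$, where $c_{\min}=\min\{c_{e}\mid e\in E(i),\ i\in\Lambda\}\in(0,1)$: the upper bound is the defining property of $E^{*}(\omega,\epsilon)$, and the lower bound holds because deleting the last edge of $\mathbf e$ already gives ratio $>\epsilon$. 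Consequently any word $\mathbf e=(e_{1},\dots,e_{k})$ obtained as a concatenation of $k$ edges of $\mathbf\Gamma^{\epsilon}$ has contraction ratio in $\bigl((c_{\min}\epsilon)^{k},\epsilon^{k}\bigr]$.

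First I would turn this into a two-sided pressure estimate. Applying $\mathfrak R^{s}$ to the relevant matrix products and comparing, entry by entry, with $\mathfrak R^{0}$ of the same product — all entries being non-negative — and then applying the monotone, positively homogeneous seminorm $\vertiii{\mathds{1}\,\cdot\,}$, one obtains, after taking $k$-th roots,
\[
c_{\min}^{s}\,\epsilon^{s}\,\Psi^{k}_{\omega}(0,\epsilon)\ \le\ \Psi^{k}_{\omega}(s,\epsilon)\ \le\ \epsilon^{s}\,\Psi^{k}_{\omega}(0,\epsilon)
\]
for every $k\ge1$ and $s\ge0$. Letting $k\to\infty$ (the limits exist almost surely by Lemma~\ref{convergenceLemma}) and passing from the pathwise to the deterministic pressure via Lemma~\ref{almostSurePressureProperties} gives
\[
c_{\min}^{s}\,\epsilon^{s}\,\mathbf\Psi(0,\epsilon)\ \le\ \mathbf\Psi(s,\epsilon)\ \le\ \epsilon^{s}\,\mathbf\Psi(0,\epsilon)
\]
for all $s\ge0$ and all sufficiently small $\epsilon$. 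Substituting $s=s_{H,\epsilon}$, for which $\mathbf\Psi(s_{H,\epsilon},\epsilon)=1$ by Lemma~\ref{almostSurePressureProperties}, taking logarithms (note $\mathbf\Psi(0,\epsilon)\ge1$) and dividing by the positive quantities $-\log\epsilon$ and $-\log(c_{\min}\epsilon)$ respectively, this becomes
\[
\frac{\log\mathbf\Psi(0,\epsilon)}{-\log(c_{\min}\epsilon)}\ \le\ s_{H,\epsilon}\ \le\ \frac{\log\mathbf\Psi(0,\epsilon)}{-\log\epsilon}.
\]
Since $-\log(c_{\min}\epsilon)=(-\log\epsilon)\bigl(1+o(1)\bigr)$ as $\epsilon\to0$ and, by Theorem~\ref{upperBox}, $\frac{\log\mathbf\Psi(0,\epsilon)}{-\log\epsilon}\to s_{B}=\dim_{B}K_{v}(\omega)$, both sides converge to $s_{B}$, whence $s_{H,\epsilon}\to s_{B}$.

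It then remains to collect the dimension inequalities. Combining Lemma~\ref{approxAreSubsets} with Theorem~\ref{lowerHausdorff} gives $\dim_{H}K_{v}(\omega)\ge\dim_{H}K_{v,\epsilon}(\omega)=s_{H,\epsilon}$ for every $\epsilon>0$, hence $\dim_{H}K_{v}(\omega)\ge s_{B}$; conversely $\dim_{H}K_{v}(\omega)\le\overline\dim_{B}K_{v}(\omega)=s_{B}$ by the general inequality and Theorem~\ref{upperBox}. Thus $\dim_{H}K_{v}(\omega)=\dim_{B}K_{v}(\omega)=s_{B}$, and since $\dim_{H}\le\dim_{P}\le\overline\dim_{B}$ for bounded sets, also $\dim_{P}K_{v}(\omega)=s_{B}$; independence of $v$ is inherited from Theorem~\ref{upperBox}.

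The main obstacle is the comparison step. One has to check carefully that the infinite, row-length--indexed structure of the matrices $\mathbf P^{s}_{\epsilon}$ — whose rows keep track of the lengths, as paths in $\mathbf\Gamma$, of the stopping-graph edges — together with the seminorm $\vertiii{\mathds{1}\,\cdot\,}$ genuinely transmits the crude uniform bound $c_{\mathbf e}^{s}\in\bigl((c_{\min}\epsilon)^{ks},\epsilon^{ks}\bigr]$ over \emph{all} words of combined length $k$, and that interchanging the $k$-th-root limit with these inequalities, and replacing the pathwise pressure $\mathbf\Psi_{\omega}$ by the deterministic $\mathbf\Psi$, are both legitimate; once that is in place, everything else is elementary asymptotics.
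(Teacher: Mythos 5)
Your proof is correct and follows essentially the same route as the paper: compare $\mathbf{P}^{s}_{\epsilon}$ entrywise with $\epsilon^{s}\mathbf{P}^{0}_{\epsilon}$, identify $s_{H,\epsilon}$ with $\log\mathbf{\Psi}(0,\epsilon)/(-\log\epsilon)$ up to an error vanishing as $\epsilon\to0$, and then squeeze using Lemma~\ref{approxAreSubsets}, Theorem~\ref{lowerHausdorff} and Theorem~\ref{upperBox}. If anything, your two-sided bound $(c_{\min}\epsilon)^{s}\mathbf{P}^{0}_{\epsilon}\le\mathbf{P}^{s}_{\epsilon}\le\epsilon^{s}\mathbf{P}^{0}_{\epsilon}$ makes explicit the lower estimate $s_{H,\epsilon}\ge\log\mathbf{\Psi}(0,\epsilon)/(-\log(c_{\min}\epsilon))$, which is the direction actually needed for the final squeeze and which the paper's write-up only gestures at (its displayed inequality $\mathbf{P}^{s_{H,\lambda}}_{\lambda}\le\lambda^{s_{H,\lambda}}\mathbf{P}^{0}_{\lambda}$ by itself yields $t_{\lambda}\ge s_{H,\lambda}$ rather than the stated $t_{\lambda}\le s_{H,\lambda}$).
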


If the attractor of $\mathbf{\Gamma}$ satisfies the USSC we can in addition give an easy description of the almost sure dimension of the attractor.
\begin{cor}\label{separatedCorollary}
Assume the USSC is satisfied, then $s_{H,\epsilon}=s_{B}$ for all $\epsilon>0$, and, almost surely,
\begin{equation}\label{USSCCorFormula}
\dim_{H}K_{v}(\omega)=\dim_{B}K_{v}(\omega)=s_{O}\text{, where }\lim_{k\to\infty}
\lVert p_{1}^{s_{O}}(\omega,1)\dots p_{1}^{s_{O}}(\sigma^{k-1}\omega,1)\rVert_{1}^{1/k}=1.
\end{equation}
Equivalently, $s_{O}$ is the unique non-negative real satisfying 
\[\inf_{k}(\E^{\geo}\lVert p_{1}^{s_{O}}(\omega,1)\dots p_{1}^{s_{O}}(\sigma^{k-1}\omega,1)\rVert_{1})^{1/k}=1.
\]
\end{cor}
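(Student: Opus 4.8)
The plan is to derive the statement purely from results already established, together with one explicit computation of the $(s,1)$-pressure. Under the USSC, Lemma~\ref{approxAreSubsets} gives $K_{v,\epsilon}(\omega)=K_v(\omega)$ for every $\epsilon>0$, $v\in V$ and $\omega$, so Theorem~\ref{lowerHausdorff} yields $\dim_H K_v(\omega)=\dim_H K_{v,\epsilon}(\omega)=s_{H,\epsilon}$ almost surely, for \emph{every} $\epsilon>0$. In particular $s_{H,\epsilon}$ is independent of $\epsilon$; since the Main Theorem~\ref{dimensionEquality} asserts $s_{H,\epsilon}\to s_B$ as $\epsilon\to0$, a constant net equals its limit, whence $s_{H,\epsilon}=s_B$ for all $\epsilon>0$. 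Combined with Theorem~\ref{dimensionEquality} this already gives $\dim_H K_v(\omega)=\dim_P K_v(\omega)=\dim_B K_v(\omega)=s_B$ almost surely, with $s_B$ independent of $v$. It therefore only remains to identify $s_B=s_{H,1}$ with $s_O$, i.e.\ to compute $\mathbf{\Psi}(s,1)$.

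For this I would use that at $\epsilon=1$ the infinite-matrix machinery degenerates. Since $c_{\max}<1$ we have $k_{\max}(1)=1$, so $\mathbf{\Gamma}^1$ only contains paths of length one, and by the argument used for the inclusion $K_v(\omega)\subseteq K_{v,\epsilon}(\omega)$ in the proof of Lemma~\ref{approxAreSubsets} no edge is deleted under the USSC; hence $\Gamma^1(\omega)=\Gamma(\omega_1)$ and $p^s_1(\omega,1)$ of~(\ref{HutchinsonMatrix}) is the Hutchinson--Moran matrix of $\Gamma(\omega_1)$, while $\eta_q,p^s_q$ with $q\geq2$ do not occur. Feeding this into the definitions of $\mathbf{H}^1$ and $\mathbf{P}^s_1=\mathfrak{R}^s(\mathbf{H}^1)$, one sees that $\mathbf{P}^s_1(\omega)$ is ``diagonal'', with $(i,i)$-entry $p^s_1(\sigma^{i-1}\omega,1)$ and all other entries $\mathbf{0}$. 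A short induction on $k$ then gives $\mathds{1}\,\mathbf{P}^s_1(\omega)\mathbf{P}^s_1(\sigma\omega)\cdots\mathbf{P}^s_1(\sigma^{k-1}\omega)=\{\,Q_k(\omega,s),\mathbf{0},\mathbf{0},\dots\}$ with $Q_k(\omega,s):=p^s_1(\omega,1)\cdots p^s_1(\sigma^{k-1}\omega,1)$, so $\Psi^k_\omega(s,1)=\lVert Q_k(\omega,s)\rVert_{\text{row}}^{1/k}$. As $\lVert\cdot\rVert_{\text{row}}$ and $\lVert\cdot\rVert_1$ are equivalent on the finite-dimensional space $\mathcal{M}_{n\times n}(\R)$, the $k$-th root absorbs the equivalence constants and $\mathbf{\Psi}_\omega(s,1)=\lim_{k\to\infty}\lVert Q_k(\omega,s)\rVert_1^{1/k}$ almost surely.

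To conclude, Lemma~\ref{almostSurePressureProperties} gives $\mathbf{\Psi}(s,1)=\mathbf{\Psi}_\omega(s,1)$ for a.e.\ $\omega$ and a unique $s$ with $\mathbf{\Psi}(s,1)=1$, namely $s_{H,1}$; by the previous step this is exactly the unique $s$ with $\lim_k\lVert Q_k(\omega,s)\rVert_1^{1/k}=1$ almost surely, i.e.\ $s_O$, so $s_O=s_{H,1}=s_B$. For the reformulation via $\E^{\geo}$, note $\lVert\cdot\rVert_1$ is submultiplicative and $\mu$ is $\sigma$-invariant, so $k\mapsto\E\log\lVert Q_k(\cdot,s)\rVert_1$ is subadditive; Fekete's lemma gives $\inf_k(\E^{\geo}\lVert Q_k(\cdot,s)\rVert_1)^{1/k}=\lim_k(\E^{\geo}\lVert Q_k(\cdot,s)\rVert_1)^{1/k}$, and Kingman's subadditive ergodic theorem (applied exactly as in the proof of Lemma~\ref{convergenceLemma}) identifies this common value with $\lim_k\lVert Q_k(\omega,s)\rVert_1^{1/k}=\mathbf{\Psi}(s,1)$ for a.e.\ $\omega$. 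Setting it equal to $1$ and using the uniqueness in Lemma~\ref{almostSurePressureProperties} then gives the second characterisation of $s_O$.

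The step I expect to require the most care is the $\epsilon=1$ degeneration: one must handle the boundary behaviour of the definitions of $k_{\max}(\epsilon)$ and $E^*(\omega,\epsilon)$ (the condition $c_{\mathbf{e}^\ddagger}>\epsilon$ is vacuous when $\epsilon=1$ and $\lvert\mathbf{e}\rvert=1$), confirm that under the USSC the maximal non-overlapping family $E(\omega,1)$ is genuinely all length-one edges of $\Gamma(\omega_1)$ so that no summands are lost from the Hutchinson--Moran matrix, and check that the infinite products of $\mathbf{P}^s_1$ collapse \emph{exactly} to the scalar cocycle $Q_k(\omega,s)$ rather than only up to lower-order corrections; everything else is a direct application of the cited lemmas.
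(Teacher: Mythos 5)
Your proposal is correct and follows essentially the same route as the paper: USSC plus Lemma~\ref{approxAreSubsets} and Theorem~\ref{lowerHausdorff} force $s_{H,\epsilon}$ to be constant and hence equal to $s_B$, and the identification with $s_O$ comes from the observation that $k_{\max}(1)=1$ makes $\mathbf{P}^{s}_1(\omega)$ diagonal so the infinite-matrix seminorm collapses to $\lVert p_1^{s}(\omega,1)\cdots p_1^{s}(\sigma^{k-1}\omega,1)\rVert_{\text{row}}$, with the $\E^{\geo}$ form then following from subadditivity as in Lemma~\ref{convergenceLemma}. The extra care you flag about the $\epsilon=1$ degeneration is warranted but resolves exactly as you describe, matching the paper's computation.
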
\label{VvarandOther}
Because $V$-variable self-similar sets are $1$-variable RGDS self-similar and under the assumption that $\mathbf{\Gamma}$ satisfies the USSC, Corollary~\ref{separatedCorollary} reduces to the results in Barnsley et al.~\cite{Barnsley12}. Additionally we get the following new result:
\begin{cor}
Let $F(\omega)$ be the attractor of a $V$-variable random iterated function system. Irrespective of overlaps, almost surely,
\[
\dim_{H}F(\omega)=\overline\dim_{B}F(\omega)=\dim_{B}F(\omega).
\]
\end{cor}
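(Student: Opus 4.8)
The plan is to deduce this corollary directly from the Main Theorem (Theorem~\ref{dimensionEquality}) by exhibiting a $V$-variable random IFS as a particular instance of a $1$-variable self-similar RGDS satisfying the hypotheses of Definition~\ref{graphDefs}. First I would set up the translation: given a $V$-variable RIFS built from a finite family of IFSs $\mathbb L=\{\mathbb I_i\}_{i\in\mathcal L}$ with probabilities $\vec\pi$, I take the vertex set $V=\{1,\dots,V\}$ (one vertex per ``slot'' in the $V$-variable construction), and I build the finite collection of graphs $\mathbf\Gamma=\{\Gamma_\kappa\}$ indexed by $\kappa$ ranging over all the data needed to pass from one level to the next in the $V$-variable scheme — namely, for each of the $V$ current slots a choice of IFS from $\mathcal L$, together with the ``addressing function'' specifying, for each map of that IFS, which of the $V$ slots of the next level it points to. Each such $\kappa$ determines a graph on $V$ whose edges from vertex $v$ are the maps of the chosen IFS $\mathbb I_{i(v)}$, with terminal vertex dictated by the addressing function; the probability $\pi_\kappa$ is the product of the $\pi_{i(v)}$ over the $V$ slots times the (uniform) probability of the chosen addressing function. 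Then the $1$-variable RGDS attractor $K_v(\omega)$ at any vertex $v$ is, by construction, exactly a $V$-variable attractor $F(\omega)$ in distribution (up to relabelling the randomness), so the two models genuinely coincide.

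Next I would verify the three conditions of Definition~\ref{graphDefs} for this $\mathbf\Gamma$. Non-triviality (Condition~\ref{nontrivialgraph}) holds because each vertex always has outgoing edges (every IFS is non-empty) and, excluding the degenerate case where all maps across all IFSs are identical, there are two distinct similarity maps. Condition~\ref{contractingSSRGDS} is immediate since all maps of all the $\mathbb I_i$ are contracting similitudes. The only condition requiring a small argument is stochastic strong connectedness (Condition~\ref{strongConnected}): I need that for every pair of vertices $v,w$ there is a finite word $\omega^{v,w}$ of positive $\mu$-measure with a path from $v$ to $w$ in the composed graphs. This follows because the addressing functions are chosen uniformly and independently, so with positive probability at each step the map from slot $v$ lands in slot $w$ (or, over a bounded number of steps, one can route from any slot to any other); since $\pi_\kappa>0$ for the relevant $\kappa$, the corresponding cylinder has positive measure. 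A seed set $\Delta$ with $\Delta=\overline{\interior\Delta}$ and $S_e(\Delta)\subset\Delta$ for all maps exists since finitely many contracting similitudes always admit such a common invariant region (e.g.\ a large ball).

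Having checked the hypotheses, Theorem~\ref{dimensionEquality} applies and yields, almost surely, $\dim_H K_v(\omega)=\dim_P K_v(\omega)=\dim_B K_v(\omega)=s_B$ with $s_B$ independent of $v$; in particular the box-counting dimension exists, so $\overline\dim_B K_v(\omega)=\dim_B K_v(\omega)$. Transporting this back through the identification $K_v(\omega)\stackrel{d}{=}F(\omega)$ gives $\dim_H F(\omega)=\overline\dim_B F(\omega)=\dim_B F(\omega)$ almost surely, as claimed, with no separation hypothesis needed. The main obstacle I anticipate is purely bookkeeping: making the dictionary between the $V$-variable ``tree of slots with addressing functions'' and the RGDS ``collection of graphs on $V$ vertices with a Bernoulli-chosen index sequence'' precise enough that the equality in distribution $K_v(\omega)\stackrel{d}{=}F(\omega)$ is genuinely forced — in particular handling the bounded number of distinct graphs (finitely many IFS-assignments times finitely many addressing functions) and confirming that the independence structure of the $V$-variable levels is exactly the i.i.d.\ structure of $\omega\in\Omega$. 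Once that identification is nailed down, everything else is a direct invocation of the Main Theorem.
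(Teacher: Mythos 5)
Your proposal is correct and follows exactly the route the paper takes: the paper's own justification is the two-sentence remark that a $V$-variable set is a vector of $V$ sets, that one associates a vertex to each slot and chooses the graphs (and probabilities) appropriately so that the construction becomes a $1$-variable RGDS satisfying Definition~\ref{graphDefs}, and then Theorem~\ref{dimensionEquality} applies. You have simply spelled out the dictionary (addressing functions, product probabilities, verification of Conditions~\ref{nontrivialgraph}--\ref{contractingSSRGDS}) that the paper leaves as ``a quick calculation.''
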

This follows since the construction of a $V$-variable set relies on a vector of sets of dimension $V$. Associating a vertex to each of these sets we can chose graphs appropriately. 

However, in contrast to all other dimensions, the Assouad dimension `maximises' the dimension. This phenomenon has been observed in many different settings, which is not surprising as the Assouad dimension `searches' for the relatively most complex part in the attractor and the random construction allows a very complex pattern to arise on many levels with probability one, even though these events get `ignored' by the averaging behaviour of Hausdorff and box-counting dimension.

\begin{defn}\label{jointSpectralDef}
Let $\mathbf{\Gamma}$ be as above. We define the \emph{$\epsilon$-joint spectral radius} by
\[
\mathfrak{P}(\epsilon)=\lim_{k\to\infty}\left(\sup_{\omega\in\Omega}\left\{\vertiii{\mathds{1}\mathbf{P}_{\epsilon}^{0}(\omega)\mathbf{P}_{\epsilon}^{0}(\sigma^{1}\omega)\hdots\mathbf{P}^{0}_{\epsilon}(\sigma^{k-1}\omega)}\right\}\right)^{1/k}.
\]
\end{defn}
We note that the spectral radius coincides for almost every $\zeta\in\Omega$ with the limit in~(\ref{almostSureConvergenceLemmaEasy}):
\begin{equation}\label{spectralEquivalence}
\mathfrak{P}(\epsilon)=\alpha=\lim_{k\to\infty}\vertiii{\mathbf{P}_{\epsilon}^{0}(\zeta)\hdots\mathbf{P}_{\epsilon}^{0}(\sigma^{k-1}\zeta)}_{\sup}^{1/k}.
\end{equation}
We demonstrate this in the proof of Theorem~\ref{AssouadTheo}.

\begin{theo}\label{AssouadTheo}
Assume $K_{v}(\omega)\subset \R^{d}$ is not contained in any $d-1$-dimensional hyperplane for all $v\in V$ and almost all $\omega\in\Omega$. 
Irrespective of separation conditions, almost surely,
\begin{equation}\label{Assouad1VarEq}
\dim_{A}K_{v}(\omega)\geq\min\left\{d,\sup_{\epsilon>0}\frac{\log\mathfrak{P}(\epsilon)}{-\log\epsilon}\right\}.
\end{equation}
Further, the USSC implies equality in~(\ref{Assouad1VarEq}).
\end{theo}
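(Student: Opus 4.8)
The plan is to treat the lower bound by exhibiting, almost surely and at arbitrarily fine scales, a rescaled piece of $K_{v}(\omega)$ carrying a near-extremal number of well-separated points, and the reverse inequality (under the USSC) by a direct cylinder-covering estimate that is uniform over all shifts of $\omega$; the no-hyperplane hypothesis enters only to boost a tangent to the full $d$-cube when the spectral exponent would exceed $d$. I would first prove the auxiliary identity~\eqref{spectralEquivalence}. The easy inequality $\vertiii{\mathds 1 M}\le\vertiii{M}_{\sup}$ (the left side is the row-norm of the first block-row of $M$, the right side a supremum of block-row norms) gives $\mathfrak P(\epsilon)\le\alpha$, and the reverse follows because, for small $\epsilon$, $\mathbf\Gamma^{\epsilon}$ is stochastically strongly connected, so—by the row-sum comparison underlying Lemma~\ref{rowSumsAllTheSame}—every block-row of a long product is comparable, up to a factor that is sub-exponential in the length, to its first block-row; feeding this into Lemma~\ref{convergenceLemma} together with the Fekete argument for the submultiplicative sequence $k\mapsto\sup_{\omega}\vertiii{\mathbf P^{0}_{\epsilon}(\omega)\cdots\mathbf P^{0}_{\epsilon}(\sigma^{k-1}\omega)}_{\sup}$ shows $\mathfrak P(\epsilon)=\alpha$ and that the defining limit is in fact an infimum.

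For the lower bound, fix $\epsilon,\delta>0$. Since the limit in Definition~\ref{jointSpectralDef} is an infimum, for each $m$ there is a finite word $z=z(m,\epsilon)$, depending on only finitely many letters, with $\vertiii{\mathds 1\mathbf P^{0}_{\epsilon}(\zeta)\cdots\mathbf P^{0}_{\epsilon}(\sigma^{m-1}\zeta)}\ge(\mathfrak P(\epsilon)-\delta)^{m}$ for all $\zeta\in[z]$; equivalently, from some vertex $v_{0}$ there are at least $(\mathfrak P(\epsilon)-\delta)^{m}$ paths of $m$ edges in $\mathbf\Gamma^{\epsilon}$, whose cylinders are pairwise disjoint by the construction of $E(\omega,\epsilon)$. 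As all $\pi_{i}>0$, the Borel--Cantelli lemma gives, for almost every $\omega$, a finite path $\mathbf f$ in $\mathbf\Gamma$ ending at $v_{0}$ with $\sigma^{|\mathbf f|}\omega\in[z]$, so that $S_{\mathbf f}(\Delta)$ is a cylinder of $K_{v}(\omega)$ housing $(\mathfrak P(\epsilon)-\delta)^{m}$ pairwise disjoint cylinders which, by the finiteness of the collection $\mathbf\Gamma^{\epsilon}$ (Lemma~\ref{lma:stoppingrepl}), are pairwise $c_{\mathbf f}\gamma(\epsilon c_{\min})^{m-1}$-separated for a uniform $\gamma>0$ (with $c_{\min}$ the least single-edge ratio), all contained in a ball of radius $R:=c_{\mathbf f}|\Delta|$ centred at a point of $K_{v}(\omega)$. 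Picking one point per cylinder gives $N_{r}(B(x,R)\cap K_{v}(\omega))\ge(\mathfrak P(\epsilon)-\delta)^{m}$ with $r:=c_{\mathbf f}\gamma(\epsilon c_{\min})^{m-1}$; crucially $c_{\mathbf f}$ cancels, so $R/r\asymp(\epsilon c_{\min})^{-(m-1)}$ does not depend on how deep the prefix $\mathbf f$ is. Running the definition of the Assouad dimension backwards, letting $m\to\infty$ and then $\delta\downarrow0$, yields $\dim_{A}K_{v}(\omega)\ge\log\mathfrak P(\epsilon)/(-\log(\epsilon c_{\min}))$; letting $\epsilon\downarrow0$, where $(-\log\epsilon)/(-\log(\epsilon c_{\min}))\to1$, gives $\dim_{A}K_{v}(\omega)\ge\sup_{\epsilon>0}\log\mathfrak P(\epsilon)/(-\log\epsilon)$ (the supremum being realised as $\epsilon\to0$, as in Theorem~\ref{upperBox}). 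If this quantity is $\ge d$, the same configurations provide, after applying $S_{\mathbf f}^{-1}$, more than $\rho^{-(d-\delta')}$ points in a fixed ball that are $\rho$-separated for a sequence $\rho\to0$; since $S_{\mathbf f}(K_{\tau(\mathbf f)}(\sigma^{|\mathbf f|}\omega))$ is, like $K_{v}(\omega)$, not contained in any hyperplane, a pigeonhole argument over $\rho$-grids forces these points to spread in every coordinate direction at all small scales, so a weak tangent of $K_{v}(\omega)$ contains a bi-Lipschitz image of $[0,1]^{d}$ and $\dim_{A}K_{v}(\omega)=d$. In either case $\dim_{A}K_{v}(\omega)\ge\min\{d,\sup_{\epsilon>0}\log\mathfrak P(\epsilon)/(-\log\epsilon)\}$ almost surely.

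For the reverse inequality under the USSC, recall that $K_{w}(\omega')=K_{w,\epsilon}(\omega')$ for every $w\in V$, $\omega'\in\Omega$ and $\epsilon>0$ by Lemma~\ref{approxAreSubsets}. Given $0<r<R$ and $x\in K_{v}(\omega)$, only boundedly many cylinders $S_{\mathbf e}(\Delta)$ of $\mathbf\Gamma$ with $c_{\mathbf e}\asymp R$ meet $B(x,R)$—they are pairwise disjoint and each contains a ball of radius $\asymp R$ because $\Delta=\overline{\interior\Delta}$—and on each such cylinder $N_{r}$ equals $N_{r/c_{\mathbf e}}(K_{w}(\omega'))\asymp N_{r/R}(K_{w}(\omega'))$ for $w=\tau(\mathbf e)$, $\omega'=\sigma^{|\mathbf e|}\omega$. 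So it suffices to bound $N_{\rho}(K_{w}(\omega'))$ uniformly over $w$, $\omega'$ and $\rho\in(0,1)$. Covering $K_{w}(\omega')=K_{w,\epsilon}(\omega')$ by its depth-$k$ stopping cylinders—of which there are at most $\sup_{\omega}\vertiii{\mathds 1\mathbf P^{0}_{\epsilon}(\omega)\cdots\mathbf P^{0}_{\epsilon}(\sigma^{k-1}\omega)}\le C_{\epsilon}\mathfrak P(\epsilon)^{k}$, each of diameter $\le\epsilon^{k}|\Delta|$—and choosing $k\asymp\log\rho/\log\epsilon$ gives $N_{\rho}(K_{w}(\omega'))\le C_{\epsilon}\rho^{-\log\mathfrak P(\epsilon)/(-\log\epsilon)}$; optimising over $\epsilon$ and combining with the trivial bound $N_{\rho}\le C\rho^{-d}$ yields $N_{\rho}(K_{w}(\omega'))\le C\rho^{-\min\{d,\sup_{\epsilon}\log\mathfrak P(\epsilon)/(-\log\epsilon)\}}$. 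Hence $\dim_{A}K_{v}(\omega)\le\min\{d,\sup_{\epsilon}\log\mathfrak P(\epsilon)/(-\log\epsilon)\}$, and together with the lower bound and $\dim_{A}\le d$ this is the asserted equality.

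I expect the main difficulty to be the weak-tangent step of the lower bound: organising the random, graph-directed data so that the rescaled near-extremal blocks genuinely converge to a microset, pinning the uniform separation constant $\gamma$ to the finiteness of $\mathbf\Gamma^{\epsilon}$, keeping the exponentially deep prefix $\mathbf f$ out of the ratio $R/r$, and carrying out the "filling in" to a full $d$-dimensional tangent from the no-hyperplane hypothesis when the spectral exponent exceeds $d$. A secondary subtlety is that, unlike the box-counting dimension in Theorem~\ref{dimensionEquality}, the Assouad upper bound requires $\mathfrak P(\epsilon)^{k}$ to dominate $N_{\rho}(K_{w}(\omega'))$ for \emph{every} $\omega'\in\Omega$, not merely for almost every one, which is exactly why $\mathfrak P(\epsilon)$—a supremum over realisations, in the spirit of a joint spectral radius—rather than a Lyapunov-type average is the right object here.
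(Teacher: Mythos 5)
Your argument follows essentially the same route as the paper's: identify $\mathfrak{P}(\epsilon)$ with the almost-sure limit in~(\ref{almostSureConvergenceLemmaEasy}) via the observation that the rows of a long product are the $\mathds{1}$-seminorms of shifted products; embed near-maximising finite configurations into almost every $\omega$ by Borel--Cantelli together with stochastic strong connectivity (prepending connecting words so as to land on the maximising vertex, the paper's set $G^{*}$); rescale by $S_{\mathbf f}^{-1}$ and count well-separated subcylinders for the lower bound; and, under the USSC, bound covering numbers at stopping scales \emph{uniformly over all shifts of} $\omega$ for the upper bound --- your closing remark that this uniformity is precisely why a joint-spectral-radius quantity rather than a Lyapunov average appears is exactly the point. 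Two small repairs are needed. First, the weak-tangent digression for the case $\sup_{\epsilon}\log\mathfrak{P}(\epsilon)/(-\log\epsilon)\geq d$ is both unnecessary (your separated-points estimate already yields $\dim_{A}K_{v}(\omega)\geq\min\{d,\cdot\}$ directly, since $\dim_{A}\leq d$ is trivial for subsets of $\R^{d}$) and, as written, not justified: the hypothesis that $K_{v}(\omega)$ lies in no hyperplane is a single-scale, global statement and does not force separated point configurations to ``spread in every coordinate direction at all small scales'', let alone produce a bi-Lipschitz copy of $[0,1]^{d}$ in a weak tangent; simply delete this step. Second, in the upper bound the inequality $\sup_{\omega}\vertiii{\mathds{1}\mathbf{P}^{0}_{\epsilon}(\omega)\cdots\mathbf{P}^{0}_{\epsilon}(\sigma^{k-1}\omega)}\leq C_{\epsilon}\mathfrak{P}(\epsilon)^{k}$ does not follow from $a_{k}^{1/k}\to\mathfrak{P}(\epsilon)$ alone (consider $a_{k}=k\,\mathfrak{P}(\epsilon)^{k}$, which is submultiplicative for $k\geq2$ yet not $O(\mathfrak{P}(\epsilon)^{k})$); you should instead use $a_{k}\leq C_{\delta}(\mathfrak{P}(\epsilon)+\delta)^{k}$ for each $\delta>0$ and let $\delta\downarrow0$ at the end, which is exactly the role of the exponent $s+\delta$ in the paper's version and does not affect the conclusion.
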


\section{$\infty$-variable Random Graph Directed Systems}\label{infVarSect}
In this section we introduce and provide results for the $\infty$-variable construction. In a similar fashion to Section~\ref{resultsSection} we start by giving a description of the model and then state the results. For the $\infty$-variable construction many proofs turn out to be simpler and to save space we shall give less detail in some of the proofs as they follow from standard arguments.
\subsection{Notation and Model}
The $\infty$-variable model, sometimes called random recursive or $V$-variable for $V\to\infty$, is a very intuitive model that is usually defined in a recursive manner (see \cite{Falconer86} and \cite{Graf87}).
A more standard and useful notation would be adapting the notation of random code trees. For an overview of that notation we refer the reader to J\"arvenp\"a\"a, et al.~\cite{Jarvenpaa14} who studied a different random model with a `neck structure'. However, to keep notation consistent we will describe the random recursive construction within our framework of arrangements of words. Note that, unlike the $1$-variable construction, the $\infty$-variable construction overlaps considerably with the notion of random graph directed attractors, considered in Olsen~\cite{Olsen94}, and some of the results here follow directly from the ones in aforementioned book. 

As in Section~\ref{resultsSection} we are given a collection of graphs $\mathbf{\Gamma}$ with associated non-trivial probability vector $\vec\pi$. We further assume that all the maps given by the edges of the $\Gamma_{i}$ are contracting similitudes  and that all conditions in Definition~\ref{graphDefs} are satisfied. However, we can generalise the results to include percolation by adapting Condition~\ref{nontrivialgraph}.
\begin{defn}\label{replacementCondition}
Let $\mathbf{\Gamma}=\{\Gamma_{i}\}_{i\in\Lambda}$ be a finite collection of graphs, sharing the same vertex set $V$.
We say that the collection $\mathbf{\Gamma}$ is a \emph{non-trivial surviving collection of graphs} if for every $v\in V$ we have $\E(\#\tensor*[_{v}]{E(\omega_{1})}{})>1$: there exists positive probability that the resulting $\infty$-variable RGDS coding does not consist of only $\varnothing$, and there exist $i,j\in\Lambda$ and $e_{1}\in\Gamma(i)$ and $e_{2}\in \Gamma(j)$ such that $S_{e_{1}}\neq S_{e_{2}}$.
\end{defn}

\begin{defn}
For $v\in V$ let $\mathbf{F}^{0}_{v}$ be a vector of length $n=\lvert V\rvert$ defined by
\[
(\mathbf{F}_{v}^{0})_{i}=\begin{cases}
\epsilon_{0}	&	\text{ if }i=v,\\
\varnothing	&	\text{ otherwise.}
\end{cases}
\]
We then define inductively,
\[
(\mathbf{F}^{k+1}_{v})_{i}=\bigoplus_{j=1}^{n}\;\bigoplus_{\mathbf{w}\in(\mathbf{F}^{k}_{v})_{j}}\;\bigoplus_{e\in \tensor[_{j}]{E}{_{i}}(\xi_\mathbf{w})}\mathbf{w}\odot e,
\]
where $\xi_\mathbf{w}$ is the random variable given by $\Prob(\xi_\mathbf{w}=i)=\pi_{i}$ for $i\in\Lambda$ and independent of $\mathbf{w}$. 

The $\infty$-variable RGDS coding is then given by $\mathbf{F}_{v}=\lim_{k\to\infty}\mathbf{F}_{v}^{k}$ and we define the attractor $F_{v}$ of the $\infty$-variable Random Graph Directed System to be the projection of our coding set:
\[
F_{v}=\bigcap_{k=1}^{\infty}\bigcup_{\mathbf{w}\in \mathbf{F}_{v}^{k}}S_{w_{1}}\circ S_{w_{2}}\circ\dots\circ S_{w_{k}}(\Delta)
\]
\end{defn}

%
%

Given a collection of graphs satisfying Conditions~\ref{strongConnected}, \ref{contractingSSRGDS} and \ref{replacementCondition} that do not necessarily satisfy the USSC we obtain an analogous definition of the $\epsilon$-approximation.

Let $\mathcal{Q}$ be the space of all possible realisations of the random recursive process, $\mathcal{Q}$ is a labeled tree encoding which graph $\Gamma(i)$ was chosen at each node in the construction of the tree. This means that for every word $\mathbf{w}\in\bigcup_{i=0}^{\infty}\mathbf{F}^{i}_{v}$ we associate an $i\in\Lambda$ and for $\mathbf{w}\in \mathbf{F}^{k}_{v}$ the set of infinite words $\mathbf{x}$ satisfying $\mathbf{x}\wedge \mathbf{w}=\mathbf{w}$ for the subbranches at node $\mathbf{w}$. By the same argument as in Section~\ref{stoppingGraphs} for every fixed $\epsilon>0$ there exists a finite constant $k_{\max}(\epsilon)$ such that for all $\mathbf{w}\in \mathbf{F}^{k_{\max}(\epsilon)}_{v}$ we have $\lvert S_\mathbf{w}(\Delta)\rvert\leq\epsilon$ for all realisations $q\in\mathcal{Q}$. Now $F_{v}$ is a function mapping realisations to compact sets, depending solely on the random variable $q\in\mathcal{Q}$ (picked according to the Borel probability measure induced by $\vec\pi$) but, in general, we shall ignore the $q$ in the notation of $F_{v}(q)$.

\begin{defn}
Let $\mathbf{\Gamma}$ satisfy the conditions in Definition~\ref{graphDefs}. Let $\mathcal{Q}$ be the space of all possible realisations of the random recursive process, we define the set of edges (words) of length $j$ for realisation $q$ to be $\mathbf{F}^{k}_{v}(q)$ and the $\epsilon$-stopping set of edge sets to be
\[
E^{*}_{v}(q,\epsilon)=\left\{ \mathbf{e}\in\bigcup_{i=1}^{k_{\max}(\epsilon)} \mathbf{F}^{i}_{v}(q)  \mid  c_\mathbf{e}\leq \epsilon \text{ but }c_{\mathbf{e}^{\ddagger}}>\epsilon \right\}.
\]
Again let the set of all possible subsets such that images under $S$ are pairwise disjoint be 
\[
\mathcal{E}(q,\epsilon)=\{U\subseteq E^{*}_{v}(q,\epsilon)\mid \forall \mathbf{e},\mathbf{f}\in U\text{ we have }S_\mathbf{e}\cap S_\mathbf{f}=\varnothing\}.
\]
Consider the element of maximal cardinality (choosing arbitrarily if there is more than one) $E_{v}(q,\epsilon)\in\mathcal{E}(q,\epsilon)$. As $E_{v}(q,\epsilon)$ only depends, at most, on the first $k_{\max}(\epsilon)$ entries, the set $\{E_{v}(q,\epsilon)\}_{q\in\mathcal{Q}}$ is finite and we write 
\[
\mathbf{\Gamma}^{\epsilon}=\{\Gamma^{\epsilon}(q)\}_{q\in\mathcal{Q}}=\{(V,E_{v}(q,\epsilon))\}_{q\in\mathcal{Q}}
\]
for the $\epsilon$-stopping graph.
\end{defn}
As $\mathbf{\Gamma}^{\epsilon}$ is finite we will set up a new code space for each of the graphs $\Gamma^{\epsilon}(q)$ that we will index by $\Lambda_{\epsilon}$. Similarly there exists positive probability of picking graph $\Gamma^{\epsilon}(\lambda)$ for $\lambda\in\Lambda_{\epsilon}$.
Unlike the $1$-variable case, the choice of graph $\mathbf{\Gamma}$ is independent for each node, a property which transfers to the setting of the $\epsilon$-stopping graph.  
\begin{lma}\label{sameInfiniteProcess}
The random recursive algorithm that generates the attractor of the $\epsilon$-stopping graphs $\mathbf{\Gamma}^{\epsilon}$  is identical to the process that generates the attractor of the RGDS $\mathbf{\Gamma}$. 
Note that for $t\geq1$ the identity $\mathbf{\Gamma}=\mathbf{\Gamma}^{t}$ holds and we trivially have that the attractor of the RGDS $\mathbf{\Gamma}^{\epsilon}$ is a subset of the attractor of $\mathbf{\Gamma}$, with equality holding if the attractor of  $\mathbf{\Gamma}$ satisfies the USSC.
\end{lma}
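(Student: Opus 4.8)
The plan is to treat the three assertions separately, in increasing order of difficulty. The first two are bookkeeping. For $t\geq 1$ we have $c_{\max}<1\leq t$, hence $k_{\max}(t)=1$, so every path in the $\epsilon$-stopping set $E^{*}_{v}(q,t)$ is a single edge of $\mathbf{\Gamma}$ and $\mathbf{\Gamma}^{t}=\mathbf{\Gamma}$. The inclusion of the attractor of $\mathbf{\Gamma}^{\epsilon}$ in $F_{v}$, and its upgrade to equality under the USSC, I would prove exactly as for the $1$-variable analogue in Lemma~\ref{approxAreSubsets}: every word of the $\epsilon$-tree is a genuine word of the original random tree, so substituting each $\epsilon$-edge by the path of $\mathbf{\Gamma}$ it represents embeds the coding — hence the attractor — of $\mathbf{\Gamma}^{\epsilon}$ into that of $\mathbf{\Gamma}$; and if the USSC holds, the pieces $S_{\mathbf{e}}(F_{\tau(\mathbf{e})})$ over incomparable paths $\mathbf{e}$ are pairwise disjoint, so no $\epsilon$-stopping path is ever discarded when the maximal pairwise-disjoint family $E_{v}(q,\epsilon)$ is selected, which gives the reverse inclusion.

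The substantive claim is that the $\epsilon$-stopping construction is again a random recursive RGDS, driven now by the finite family $\mathbf{\Gamma}^{\epsilon}=\{\Gamma^{\epsilon}(\lambda)\}_{\lambda\in\Lambda_{\epsilon}}$ with the induced probability vector and with \emph{independent} graph choices at each node; this is exactly what allows every subsequent $\infty$-variable result to be applied to $\mathbf{\Gamma}^{\epsilon}$. My plan here is a renewal argument resting on one combinatorial observation. Since $c_{\mathbf{e}}$ is the product of the individual edge ratios, all $<1$, an $\epsilon$-stopping path $\mathbf{e}$ (characterised by $c_{\mathbf{e}}\leq\epsilon$ but $c_{\mathbf{e}^{\ddagger}}>\epsilon$) has no proper extension that is again $\epsilon$-stopping, and more: no node of the tree at or below $\mathbf{e}$ is a strict prefix of an $\epsilon$-stopping path — in particular the $\epsilon$-stopping paths out of any fixed node form an antichain, all of length at most $k_{\max}(\epsilon)$. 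Hence the only randomness the map $q\mapsto E_{v}(q,\epsilon)$ consults in order to determine, and select a maximal disjoint subfamily of, the first-level $\epsilon$-edges $\mathbf{e}^{(1)},\dots,\mathbf{e}^{(m)}$ consists of the finitely many labels $\xi_{\mathbf{u}}$ with $\mathbf{u}$ a strict prefix (of length $<k_{\max}(\epsilon)$) of some $\epsilon$-stopping path out of the root; this finite set of consumed labels is disjoint from the label family living in the subtree below each $\mathbf{e}^{(j)}$, and those subtree families are pairwise disjoint because the $\mathbf{e}^{(j)}$ are incomparable.

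As the $\xi_{\mathbf{w}}$ are i.i.d.\ across nodes and independent of the words themselves, disjointness of index sets promotes to genuine probabilistic independence: conditionally on the first-level $\epsilon$-edge set and its terminal vertices, the subtrees below $\mathbf{e}^{(1)},\dots,\mathbf{e}^{(m)}$ are mutually independent, each being an independent copy of the original $\infty$-variable RGDS rooted at $\tau(\mathbf{e}^{(j)})$. Iterating this down the $\epsilon$-tree — each of whose nodes, by construction, carries a fresh $\Lambda_{\epsilon}$-label drawn with the induced probability independently of the rest — identifies the law of the $\epsilon$-stopping process with that of the $\infty$-variable RGDS on $\mathbf{\Gamma}^{\epsilon}$. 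The step I expect to be the main obstacle, and the one I would spell out in full, is precisely this independence bookkeeping: verifying that $E_{v}(\cdot,\epsilon)$, including the deterministic tie-break used to pick a maximal disjoint family, is measurable with respect to the consumed labels alone and that none of those labels recurs lower in the $\epsilon$-tree. Everything else follows from the i.i.d.\ structure of the underlying tree, and this is also why the present case is cleaner than the $1$-variable one, where a shift $\sigma$ intervenes between levels and no comparably clean renewal is available.
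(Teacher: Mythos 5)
Your proposal is correct and is essentially the detailed version of the argument the paper itself suppresses: the paper omits the proof entirely, remarking only that ``both processes can easily [be] seen to be $\infty$-variable RGDS,'' and your antichain/renewal bookkeeping --- the stopping set at a node is measurable with respect to the labels at strict prefixes of the stopping paths, which are disjoint from the label families of the subtrees below them, so i.i.d.\ labels yield independent copies rooted at the terminal vertices --- is exactly the content of that remark, with the subset and USSC claims handled as in Lemma~\ref{approxAreSubsets}, as intended.
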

We  omit a detailed proof as both processes can easily seen to be $\infty$-variable RGDS. Now let $\mathbf{K}^{\epsilon}(q)$ be the matrix consisting of arrangements of words related to $\Gamma^{\epsilon}(q)$. Let $\tensor[_{v}]{E}{_{w}}(\Gamma(q))$ be the collection of edges $e$ of $\Gamma(q)$ so that $\iota(e)=v$ and $\tau(e)=w$, and define
\[
\mathbf{K}^{\epsilon}(q)=\begin{pmatrix}
\bigoplus_{e\in \tensor*[_{1}]{E}{_{1}}(\Gamma^{\epsilon}(q))}e	&	\dots		&	\bigoplus_{e\in \tensor*[_{1}]{E}{_{n}}(\Gamma^{\epsilon}(q))}e \\
\vdots		&		\ddots		&\vdots		\\
\bigoplus_{e\in \tensor*[_{n}]{E}{_{1}}(\Gamma^{\epsilon}(q))}e	&	\dots		&	\bigoplus_{e\in \tensor*[_{n}]{E}{_{n}}(\Gamma^{\epsilon}(q))}e
\end{pmatrix}.
\]

\begin{theo}\label{infiniteSameDimension}
Let $\mathbf{\Gamma}$ be a finite collection of graphs satisfying Conditions~~\ref{strongConnected}, \ref{contractingSSRGDS} and \ref{replacementCondition} with associated non-trivial probability vector $\vec\pi$. Let $F_{v}$ be the attractor of the random recursive construction, then almost surely the Hausdorff and the upper box counting dimension agree and thus,
\[
\dim_{H}F_{v}=\dim_{P}F_{v}=\dim_{B}F_{v}.
\]
\end{theo}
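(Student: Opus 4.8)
The plan is to sandwich all three dimensions between the Hausdorff dimension of the \emph{separated} $\epsilon$-approximations $F_{v,\epsilon}$ and an upper bound for $\overline\dim_B F_v$ coming from a cover of $F_v$ by stopping-graph cylinders; the scheme parallels the $1$-variable Theorem~\ref{dimensionEquality} and the argument of Liu--Wu~\cite{Liu02}. First I would record the separated case. By construction the random recursive attractor $F_{v,\epsilon}$ of $\mathbf{\Gamma}^{\epsilon}$ satisfies the USSC, and since $\mathbf{\Gamma}$ (hence $\mathbf{\Gamma}^{\epsilon}$) is stochastically strongly connected its expected offspring matrix is irreducible, so the classical theory for separated random recursive constructions (Falconer~\cite{Falconer86}, Mauldin--Williams~\cite{Mauldin86}, Graf~\cite{Graf87}; see also Olsen~\cite{Olsen94}, and Liu--Wu~\cite{Liu02} for the box dimension) gives, almost surely on $\{F_{v,\epsilon}\ne\varnothing\}$,
\[
\dim_H F_{v,\epsilon}=\dim_B F_{v,\epsilon}=s_{\epsilon},
\]
where $s_{\epsilon}$ is the unique $s\ge 0$ at which the spectral radius of $\E(\mathfrak{R}^{s}(\mathbf{K}^{\epsilon}(q)))$ equals $1$, and $s_{\epsilon}$ is independent of $v$ by Perron--Frobenius.

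For the lower bound, Lemma~\ref{sameInfiniteProcess} gives $F_{v,\epsilon}\subseteq F_{v}$, hence $\dim_H F_{v}\ge s_{\epsilon}$ for every $\epsilon>0$ and so $\dim_H F_{v}\ge s_B:=\sup_{\epsilon>0}s_{\epsilon}$ (one checks, as in the $1$-variable setting, that $\epsilon\mapsto s_{\epsilon}$ is monotone, so $s_B=\lim_{\epsilon\to 0}s_{\epsilon}$). For the matching upper bound I would, for fixed $\epsilon$ and realisation $q$, cover $F_{v}$ by the cylinders $\{S_{\mathbf{e}}(\Delta):\mathbf{e}\in E^{*}_{v}(q,\epsilon)\}$, all of which have diameter comparable to $\epsilon$ since $c_{\mathbf{e}}\in[c_{\min}\epsilon,\epsilon]$ (with $c_{\min}=\min_e c_e$). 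Greedily extracting a maximal pairwise-disjoint subfamily and using $\Delta=\overline{\interior \Delta}$, every cylinder of the cover lies in a ball of radius $C\epsilon$ about a point of a chosen disjoint cylinder ($C$ depending only on $d$, $\Delta$, $c_{\min}$), and a disjoint subfamily of $E^{*}_{v}(q,\epsilon)$ has at most $\#E_{v}(q,\epsilon)$ members, so $N_{C\epsilon}(F_{v})\le\#E_{v}(q,\epsilon)$. Now $\#E_{v}(q,\epsilon)$ is the first-generation size of the irreducible multitype branching process driven by $\mathbf{\Gamma}^{\epsilon}$, so by stochastic strong connectivity $\E(\#E_{v}(q,\epsilon))$ is comparable, uniformly in $\epsilon$, to the spectral radius $\rho_{\epsilon}$ of $\E(\mathfrak{R}^{0}(\mathbf{K}^{\epsilon}(q)))$; comparing $\mathfrak{R}^{s_{\epsilon}}(\mathbf{K}^{\epsilon})$ with $\epsilon^{s_{\epsilon}}\mathfrak{R}^{0}(\mathbf{K}^{\epsilon})$ entrywise and using $0\le s_{\epsilon}\le d$ gives $\log\rho_{\epsilon}=-s_{\epsilon}\log\epsilon+O(1)$. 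A Markov inequality and Borel--Cantelli along the scales $\epsilon=2^{-k}$, together with monotonicity of $r\mapsto N_{r}(F_{v})$ to fill in intermediate scales, then yield $\overline\dim_B F_{v}\le\limsup_{\epsilon\to 0}\frac{\log\#E_{v}(q,\epsilon)}{-\log\epsilon}\le\limsup_{\epsilon\to 0}s_{\epsilon}=s_B$ almost surely.

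Combining the two bounds with the standard inequalities $\dim_H\le\underline\dim_B\le\overline\dim_B$ and $\dim_H\le\dim_P\le\overline\dim_B$ gives $\dim_H F_{v}=\dim_P F_{v}=\dim_B F_{v}=s_B$ (in particular the box dimension exists) almost surely on $\{F_{v}\ne\varnothing\}$; on the extinction event $F_{v}=\varnothing$ and all of these dimensions vanish, so the asserted identity holds almost surely. The hard part is the upper bound: one must cover $F_{v}$ using the \emph{overlap-reduced} family $E_{v}(q,\epsilon)$ rather than the full stopping set $E^{*}_{v}(q,\epsilon)$ (the naive count $\#E^{*}_{v}(q,\epsilon)$ over-counts exactly in the presence of overlaps and would deliver only the, generally strictly larger, separated value), and one then needs the almost-sure branching asymptotics for $\#E_{v}(q,\epsilon)$ together with the uniform comparison $\E(\#E_{v}(q,\epsilon))\asymp\rho_{\epsilon}\asymp\epsilon^{-s_{\epsilon}}$ to make the bound match $\lim_{\epsilon\to 0}s_{\epsilon}$. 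The uniform irreducibility and $L\log L$-type hypotheses these branching estimates require follow from Conditions~\ref{strongConnected}, \ref{contractingSSRGDS} and~\ref{replacementCondition}, exactly as in the $1$-variable analysis.
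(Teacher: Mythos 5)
Your proposal is correct and follows essentially the same route as the paper: both use the fact that the $\epsilon$-stopping construction is again an $\infty$-variable RGDS satisfying the USSC, apply the classical spectral-radius formula to get the lower bound via $F_{v,\epsilon}\subseteq F_{v}$, and obtain the upper bound by comparing $\mathfrak{R}^{s}(\mathbf{K}^{\epsilon})$ with $\epsilon^{s}\mathfrak{R}^{0}(\mathbf{K}^{\epsilon})$, covering $F_{v}$ by the overlap-reduced stopping family, and running a Markov--Borel--Cantelli argument along geometric scales. The only cosmetic difference is that you organise the sandwich around $s_{B}=\sup_{\epsilon}s_{\epsilon}$ whereas the paper bounds $\overline{\dim}_{B}F_{v}$ directly by $s_{h}=\dim_{H}F_{v}$; these are equivalent.
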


We end this section by stating the Assouad dimension of this construction.
\begin{theo}\label{infiniteAssouadDimension}
Irrespective of overlaps and conditioned on $F_{v}\neq\varnothing$, the Assouad dimension of $F_{v}$ is a.s.\ bounded below by 
\begin{equation}\label{infiniteAssouadEq}
\dim_{A}F_{v}\geq\min\left\{d,\; \sup_{\epsilon>0}\max_{q\in\mathcal{Q}}\frac{\log\rho(\mathfrak{R}^{0}\mathbf{K}^{\epsilon}(q))}{-\log\epsilon}\right\}.
\end{equation}
where $\rho$ is the spectral radius of a matrix.
If the USSC is satisfied, then equality holds in~(\ref{infiniteAssouadEq}) almost surely.
\end{theo}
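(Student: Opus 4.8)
The plan is to prove the two inequalities of~(\ref{infiniteAssouadEq}) separately, transporting the strategy of Theorem~\ref{AssouadTheo} to the random recursive setting, where the independence of the graph choice at each node makes the probabilistic input considerably cleaner. Throughout, work conditionally on $F_{v}\neq\varnothing$, and recall that $\mathfrak{R}^{0}\mathbf{K}^{\epsilon}(q)$ is just the edge-counting (incidence) matrix of $\Gamma^{\epsilon}(q)$, so that $\rho(\mathfrak{R}^{0}\mathbf{K}^{\epsilon}(q))$ is the exponential growth rate of the number of long paths through $\Gamma^{\epsilon}(q)$. For the lower bound, fix a rational $\epsilon>0$ and an index $\lambda_{0}=\lambda_{0}(\epsilon)$ attaining $\rho_{0}:=\max_{q}\rho(\mathfrak{R}^{0}\mathbf{K}^{\epsilon}(q))$, and pick a Perron vertex $v^{\ast}$ of $\Gamma^{\epsilon}(\lambda_{0})$, so that $\Gamma^{\epsilon}(\lambda_{0})$ carries at least $c\,\rho_{0}^{N}$ paths of length $N$ issuing from $v^{\ast}$. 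The key probabilistic step is a zero--one argument: the tree of non-empty cylinders of the $\infty$-variable construction over $\mathbf{\Gamma}^{\epsilon}$ is a surviving supercritical multi-type branching tree (by the hypothesis $\E(\#\{\text{edges leaving }v\text{ in }\Gamma(\omega_{1})\})>1$ of Condition~\ref{replacementCondition}), hence a.s.\ carries an infinite antichain of nodes; since for each fixed $N$ the event ``below a given node the graph $\Gamma^{\epsilon}(\lambda_{0})$ is selected for the next $N$ levels, after a bounded detour reaching $v^{\ast}$ (of positive probability by Condition~\ref{strongConnected}), and at least a fixed proportion $c'\rho_{0}^{N}$ of the resulting level-$N$ cylinders are non-empty'' has a fixed positive probability, infinitely many independent trials along the antichain make it occur somewhere a.s., and intersecting over $N\in\N$ yields such configurations at arbitrarily large depth almost surely.

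Given such a configuration below a node $\mathbf{w}$, put $R=|S_{\mathbf{w}}(\Delta)|$ and $r=\epsilon^{N}R$. Since every $\epsilon$-stopping edge has contraction ratio in $(\epsilon c_{\min},\epsilon]$, where $c_{\min}=\min\{c_{e}\}>0$, the $\geq c'\rho_{0}^{N}$ surviving level-$N$ cylinders $S_{\mathbf{u}}(\Delta)$ have diameter at most $r$, each contains a ball of radius $\geq c''(\epsilon c_{\min})^{N}R$, and they are pairwise disjoint, because the $\epsilon$-stopping construction forces the images $S_{\mathbf{e}}(\Delta)$ of the edges of a single $\Gamma^{\epsilon}(q)$ to be disjoint and this propagates to compositions. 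A volume count in $\R^{d}$ then shows a single ball of radius $r$ can meet at most $O(c_{\min}^{-Nd})$ of these cylinders, whence $N_{r}(B(x,R)\cap F_{v})\geq c\,(\rho_{0}c_{\min}^{d})^{N}$ for $x\in S_{\mathbf{w}}(\Delta)\cap F_{v}$, while $R/r=\epsilon^{-N}$. As $N\to\infty$ this forces
\[
\dim_{A}F_{v}\ \geq\ \min\Big\{\,d,\ \frac{\log\rho_{0}(\epsilon)}{-\log\epsilon}-\frac{d\log(1/c_{\min})}{-\log\epsilon}\,\Big\}
\]
almost surely; note that, in contrast to Theorem~\ref{AssouadTheo}, no non-degeneracy hypothesis on $F_{v}$ is needed, since the seeds $S_{\mathbf{u}}(\Delta)$ are genuinely $d$-dimensional and the packing estimate is unconditional. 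Running this over $\epsilon=\epsilon_{0}^{m}$ and letting $m\to\infty$ kills the error term, and a supermultiplicativity argument for the stopping graphs --- as in the proof of Theorem~\ref{upperBox}, using that $\Gamma^{\epsilon_{0}^{m}}$ of a constant realisation is, up to the bounded-range adjustment of edge lengths, the $m$-fold composition of $\Gamma^{\epsilon_{0}}$ --- shows that the supremum in~(\ref{infiniteAssouadEq}) is already attained as $\epsilon\to0$, so the lower bound follows.

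For the upper bound under the USSC, Lemma~\ref{sameInfiniteProcess} identifies $F_{v}$ with the attractor of the $\infty$-variable construction over $\mathbf{\Gamma}^{\epsilon_{0}}$, and the USSC passes to $\mathbf{\Gamma}^{\epsilon_{0}}$, so the cylinders at each level are separated by a fixed multiple of their diameter, uniformly over realisations. The standard covering estimate for graph directed self-similar sets with the strong separation condition then bounds $N_{r}(B(x,R)\cap F_{v})$ by a constant times the largest number of level-$k$ cylinders needed to descend from scale $R$ to scale $r$ along any realisation; the supremum of this quantity over realisations grows at the rate of the joint spectral radius of the family $\{\mathfrak{R}^{0}\mathbf{K}^{\epsilon_{0}}(q)\}_{q}$, giving $\dim_{A}F_{v}\leq \log(\text{jsr})/(-\log\epsilon_{0})$. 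The same composition estimate used above rewrites this joint spectral radius, as $\epsilon_{0}\to0$, in terms of spectral radii of single stopping graphs at finer scales, so the bound is $\leq\sup_{\epsilon>0}\max_{q}\log\rho(\mathfrak{R}^{0}\mathbf{K}^{\epsilon}(q))/(-\log\epsilon)$, matching the lower bound and proving the asserted equality.

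The hard part is the lower bound, and there two things have to mesh. First, the probability of an $N$-level run of the \emph{worst} stopping graph below a fixed node is doubly exponentially small in $N$, so the argument relies on the fact that for each \emph{fixed} $N$ the surviving tree still offers infinitely many independent chances, which is exactly enough to force the run somewhere. Second, one must convert such a run into an Assouad-dimension lower bound \emph{without any separation of the attractor}; this is where the $\epsilon$-stopping construction earns its keep, since it guarantees disjoint, uniformly fat seed images, and the resulting per-level loss of a factor $c_{\min}^{d}$ is precisely what forces the supremum over $\epsilon$ --- rather than a single fixed scale --- to appear in the statement.
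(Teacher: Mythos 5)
Your proposal is correct in substance and follows the same overall strategy as the paper's proof: isolate the realisation of the $\epsilon$-stopping graph with maximal growth rate, show that arbitrarily long runs of it occur somewhere in the construction tree almost surely, convert the resulting $\approx\rho_{0}^{N}$ disjoint cylinders (where $\rho_{0}=\max_{q}\rho(\mathfrak{R}^{0}\mathbf{K}^{\epsilon}(q))$) into a covering-number lower bound, and obtain the USSC upper bound from the fact that no realisation branches faster than this extremal rate. The differences are in execution, and they mostly work in your favour. The paper transplants the $1$-variable machinery: it takes the realisation maximising $\vertiii{\mathfrak{R}^{0}\mathbf{K}^{\epsilon}(q)}_{\sup}$, recovers the spectral radius from powers via Gelfand's theorem, cites the Borel--Cantelli argument of Theorem~\ref{AssouadTheo} (which lives on the shift space $\Omega$) for the almost sure occurrence of the extremal subtrees, and closes with the same blown-up-copy contradiction. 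You instead exploit the per-node independence of the recursive construction directly: second Borel--Cantelli along an infinite antichain of the surviving tree, plus a positive-proportion-of-survivors estimate at level $N$. That last step is a genuine improvement, since the paper's count $N_{\epsilon^{k}}(S^{-1}_{w}(\Delta))$ silently assumes the counted cylinders contain points of the (possibly partially extinct) attractor, which is exactly what your survival estimate supplies. Your explicit volume-packing bound with per-level loss $c_{\min}^{d}$ replaces the paper's unexplained constant $k_{s}^{i}$, and your observation that no hyperplane-nondegeneracy hypothesis is needed (the packing uses the seed images $S_{\mathbf{u}}(\Delta)$, not the attractor) correctly explains why Theorem~\ref{infiniteAssouadDimension} omits the hypothesis of Theorem~\ref{AssouadTheo}.

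One step deserves a caveat, though it is shared with the paper and I would not call it a gap specific to your argument. Your per-scale bound is $\tfrac{\log\rho_{0}(\epsilon)}{-\log\epsilon}$ minus an error of order $1/(-\log\epsilon)$, and the supermultiplicativity you invoke to pass to $\epsilon\to0$ carries its own multiplicative constant $k_{s}<1$ per composition, leaving a residual deficit $\tfrac{\log(1/k_{s})}{-\log\epsilon_{0}}$ at the base scale. What comes out cleanly is the lower bound with $\sup_{\epsilon>0}$ replaced by $\limsup_{\epsilon\to0}$; recovering the stated $\sup_{\epsilon>0}$ requires showing the supremum is attained in the limit, which neither your sketch nor the paper's proof of Theorem~\ref{AssouadTheo} (where the analogous $-\log(k_{s})/\log\epsilon$ term is dismissed by ``letting $\epsilon\to0$'') fully justifies. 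Similarly, your upper bound routes through the joint spectral radius: identifying that quantity with $\sup_{\epsilon}\max_{q}\rho$ needs a Berger--Wang-type argument together with the observation that in the $\infty$-variable tree different branches follow different realisations, so the relevant cylinder counts are not literal matrix products along a single sequence; you should check that the resulting row-mixed products are still dominated by single stopping-graph matrices at the finer scale. These points would need to be addressed in a fully rigorous write-up, but they do not distinguish your route from the paper's.
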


\section{Proofs}\label{proofsection}
\subsection{Proof of Lemma \ref{convergenceLemma}}
First we prove the convergence in equation~(\ref{almostSureConvergenceLemmaEasy}). Let $n,m\in\N_{0}$, $n<m$ and define the random variable $Y_{n,m}$ as
\[
Y_{n,m}(\omega)=\log\vertiii{\mathbf{T}(\sigma^{n}\omega)\mathbf{T}(\sigma^{n+1}\omega)\hdots\mathbf{T}(\sigma^{m-1}\omega)}_{\sup}
\]
Note that, as the row norm is submultiplicative,
\begin{align}
Y_{0,n+m}(\omega)&=\log\vertiii{\mathbf{T}(\omega) \hdots\mathbf{T}(\sigma^{n-1}\omega)\mathbf{T}(\sigma^{n}\omega)\hdots\mathbf{T}(\sigma^{n+m-1}\omega)}_{\sup}  \nonumber\\
&\leq\log\left(\vertiii{\mathbf{T}(\omega)\dots\mathbf{T}(\sigma^{n-1}\omega)}_{\sup}\vertiii{\mathbf{T}(\sigma^{n}\omega)\dots\mathbf{T}(\sigma^{n+m-1}\omega)}_{\sup}\right)\nonumber\\
&=\log\vertiii{\mathbf{T}(\omega)\hdots\mathbf{T}(\sigma^{n-1}\omega)}_{\sup}+\log\vertiii{\mathbf{T}(\sigma^{n}\omega)\hdots \mathbf{T}(\sigma^{n+m-1}\omega)}_{\sup}\nonumber\\
&=Y_{0,n}(\omega)+Y_{n,m}(\omega).	\nonumber
\end{align}
As $\mu$ is an ergodic probability measure it follows from Kingman's subadditive ergodic theorem that almost surely 
\[
\lim_{k\to\infty}\frac{Y_{0,k}}{k}=\inf_{k}\E \frac{Y_{0,k}}{k}=\inf_{k}\E\log\vertiii{\mathbf{T}(\sigma^{k-1}\omega)\hdots\mathbf{T}(\omega)}_{\sup}^{1/k}=\log\alpha,
\]
giving the required result.\qed

\vspace{0.5cm}
The second part is made slightly more difficult because of the interdependence between the steps. We will show stochastic quasi-subadditivity, bounding the subadditive defects, and make use of the following variant of Kingman's subadditive ergodic theorem, see also Kingman~\cite{Kingman73}.

\begin{prop}[Derriennic,~\cite{Derriennic83}]\label{KingmanProp}
Let $X_{m}(\omega)$ be a (measurable) random variable on a probability space $(\Omega,\mu)$ and let $T$ be a measurable, measure preserving map. If the expectation of the subadditive defects is bounded by a sequence of reals numbers $(c_{m})$, i.e.\ for all $n,m\geq1$,
\[
\E(X_{n+m}(\omega)-X_{n}(\omega)-X_{m}(T^{n}\omega))^{+}\leq c_{m},
\]
where $c_{m}$ satisfies $\lim_{k}c_{k}/k\to0$, and $\E\inf_{k}X_{k}/k>-\infty$, then $X_n/n$ converges in $\mathcal{L}^1$ to some random variable taking values in $\R$. If further,
\[
X_{n+m}(\omega)-X_{n}(\omega)-X_{m}(T^{n}\omega)\leq Y_m(T^n \omega) \quad\text{(almost surely)}
\]
for some stochastic process $(Y_m)_m$ satisfying $\sup_m \E (Y_m)<\infty$, then $X_{n}/n$ converges almost surely to some random variable $\eta\in(-\infty,\infty)$.
\end{prop}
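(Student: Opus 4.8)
The plan is to adapt the proof of Kingman's subadditive ergodic theorem, treating the subadditive defects as controlled error terms. I would split the argument into four steps: (i) a Fekete-type estimate at the level of expectations, which identifies the candidate limit; (ii) iteration (``peeling'') of the almost-subadditivity, which rewrites $X_n/n$ as a Birkhoff average plus an accumulated-defect term; (iii) the $\mathcal{L}^1$ and pointwise Birkhoff ergodic theorems for the main term; and (iv) control of the defect term --- in $\mathcal{L}^1$ from the bound $\E D\le c_m$, and almost surely from the dominating process $(Y_m)$.

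Put $\gamma_n:=\E X_n$. Taking expectations in the defect hypothesis and using that $T$ preserves $\mu$ gives $\gamma_{n+m}\le\gamma_n+\gamma_m+c_m$; since $c_m/m\to0$, the almost-subadditive version of Fekete's lemma yields $\gamma_n/n\to\gamma:=\inf_m\tfrac{\gamma_m+c_m}{m}$, and $\gamma\in\R$ because $\gamma\le\gamma_1+c_1<\infty$ while $X_n/n\ge\inf_kX_k/k$ forces $\gamma\ge\E\inf_kX_k/k>-\infty$. Writing $D(a,b):=(X_{a+b}-X_a-X_b\circ T^a)^+\ge0$, so that $X_{a+b}\le X_a+X_b\circ T^a+D(a,b)$ with $\E D(a,b)\le c_b$, I would iterate this inequality --- peeling off \emph{first} the length-$r$ remainder ($r=n-m\lfloor n/m\rfloor<m$) and \emph{then} the $\lfloor n/m\rfloor$ consecutive blocks of length $m$ at positions $0,m,2m,\dots$ --- to get
\[
X_n(\omega)\ \le\ \sum_{i=0}^{\lfloor n/m\rfloor-1}\big(X_m(T^{im}\omega)+D(im,m)(\omega)\big)\ +\ X_r\big(T^{m\lfloor n/m\rfloor}\omega\big)+D\big(m\lfloor n/m\rfloor,\,r\big)(\omega).
\]
The point of peeling the remainder first is that then every defect has second argument $\le m$, hence expectation $\le\max_{s\le m}c_s$ and, under the extra hypothesis, pointwise bound $(\max_{s\le m}Y_s)$ along the orbit; in particular the block defects are dominated by $Y_m$ evaluated along $\{T^{im}\omega\}_{i}$, and no shift-dependent remainder enters the main sum.

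Dividing by $n$: the final two terms have an integrable majorant and contribute $o(1)$ (in $\mathcal{L}^1$ and, under the extra hypothesis, a.s.); $\tfrac1n\sum_{i<\lfloor n/m\rfloor}D(im,m)$ has expectation $\le\tfrac{\lfloor n/m\rfloor}{n}c_m\to c_m/m$; and $\tfrac1n\sum_{i<\lfloor n/m\rfloor}X_m\circ T^{im}\to\tfrac1m\E[X_m\mid\mathcal{I}_m]$ in $\mathcal{L}^1$ and a.s.\ by Birkhoff's theorem for $T^m$, $\mathcal{I}_m$ being the $T^m$-invariant $\sigma$-algebra. For the $\mathcal{L}^1$ claim, since $\E(X_n/n-\gamma)=\gamma_n/n-\gamma\to0$ it suffices to show $\E(X_n/n-\gamma)^+\to0$; as $x\mapsto(x-\gamma)^+$ is $1$-Lipschitz the above yields $\limsup_n\E(X_n/n-\gamma)^+\le c_m/m+\E\!\big(\tfrac1m\E[X_m\mid\mathcal{I}_m]-\gamma\big)^+$ for every $m$, and letting $m\to\infty$ (which needs the standard fact that $\tfrac1m\E[X_m\mid\mathcal{I}_m]\to\gamma$ in $\mathcal{L}^1$) gives $X_n/n\to\gamma$ in $\mathcal{L}^1$, the limit being finite and $T$-invariant (replacing $\gamma$ by an $\mathcal{I}$-measurable limit when $\mu$ is not ergodic). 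Under the dominating hypothesis the defect sum is majorised by $\sum_{i<\lfloor n/m\rfloor}Y_m\circ T^{im}$, so the \emph{pointwise} ergodic theorem applies to the whole right-hand side, giving $\limsup_nX_n/n\le\tfrac1m\E[X_m+Y_m\mid\mathcal{I}_m]$ a.s.\ for every $m$; intersecting over $m$ shows $\E(\limsup_nX_n/n)\le\gamma$, and since the $\mathcal{L}^1$ convergence forces $\limsup_nX_n/n\ge\gamma$ a.s., we get $\limsup_nX_n/n=\gamma$ a.s. The complementary bound $\liminf_nX_n/n\ge\gamma$ a.s., which I would obtain by the truncation argument of the classical subadditive theorem, then gives almost-sure convergence; $T$-invariance of the limit also follows from $X_{n+1}-X_n\le(X_1+Y_1)\circ T^n$, which makes $|X_{n+1}/(n+1)-X_n/n|\to0$ a.s.

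The hard part is purely the bookkeeping for the accumulated defects: the bound $\E D(a,b)\le c_b$ alone controls them only in $\mathcal{L}^1$, which is why the first conclusion stops at $\mathcal{L}^1$ convergence; the uniform dominating process $(Y_m)$ is exactly what enables the second application of the ergodic theorem needed to upgrade to almost-sure convergence, and the matching lower bound pinning down the limit is the remaining delicate point. A detailed execution of all these estimates is carried out in Derriennic~\cite{Derriennic83}.
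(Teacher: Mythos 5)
The paper does not prove this proposition: it is imported verbatim from Derriennic~\cite{Derriennic83} and used as a black box, so there is no in-paper argument to compare yours against. Your outline is the standard route to such almost-subadditive ergodic theorems (block decomposition with the remainder peeled first so every defect has second argument at most $m$, Birkhoff for $T^m$ on the main term, $\mathcal{L}^1$ control of the accumulated defects via $c_m$, and a second application of the pointwise ergodic theorem to the dominating process $Y_m$ for the almost-sure upper bound), and the bookkeeping you describe for the upper bounds is correct.

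However, as a proof it is incomplete in exactly the place you flag: the lower bound $\liminf_n X_n/n\geq\gamma$ almost surely is the hard direction of Kingman's theorem, and you only gesture at ``the truncation argument of the classical subadditive theorem'' without explaining how that filling-scheme argument survives the presence of the defects $D(a,b)$ (the classical argument uses exact subadditivity when reassembling the blocks, and one must check that the accumulated defects along the reassembly are again controlled by the $c_m$ or $Y_m$ hypotheses). Two smaller soft spots: the convergence $\tfrac1m\E[X_m\mid\mathcal{I}_m]\to\gamma$ in $\mathcal{L}^1$ is not quite a ``standard fact'' to be quoted --- it is itself a conditional Fekete-type statement that needs the same almost-subadditivity at the level of conditional expectations; and the $T$-invariance claim at the end does not follow from the one-sided bound $X_{n+1}-X_n\leq(X_1+Y_1)\circ T^n$, since $\sup_m\E(Y_m)<\infty$ gives only convergence in probability of $Y_n\circ T/n$ to zero, not almost-sure convergence. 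Since the paper itself simply cites Derriennic for the full argument, deferring these points to~\cite{Derriennic83} is acceptable for the purposes of the manuscript, but your text should not present them as routine.
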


If $T$ is ergodic with respect to $\Prob$, then $\eta$ is constant for almost every $\omega$ as 
\[
\{\omega\in\Omega \mid \liminf_{n\to\infty}X_{n}(\omega)/n>z\}=\{\omega\in\Omega \mid \liminf_{n\to\infty}X_{n}(T\omega)/n>z\}.
\] Since for $p>1$, the $p$-th moment satisfies $((c_{k}^{+})^{p})/k\to 0$ the limit necessarily coincides with $\lim_{k}\E (X_{k})/k=\inf_{k}\E (X_{k})/k$.

Writing $\mathbf{u}_{k}(\omega)=\mathbf{T}(\omega)\dots\mathbf{T}(\sigma^{k-1}\omega)$
 the term $\mathds{1}\mathbf{u}_{k}(\omega)$ is a matrix-valued vector with at most $lk$ positive entries, all appearing in the first $lk$ rows, where $l\geq1$ as in Section~\ref{infiniteMatrixSection}.
We have
\begin{align}
\vertiii{\mathds{1}\mathbf{u}_{n+m}(\omega)}
&=\vertiii{\mathds{1}\mathbf{u}_{n}(\omega)\mathbf{u}_{m}(\sigma^{n}\omega)}\nonumber\\
&=\left\lVert\,\lVert \mathds{1}\mathbf{u}_{n}(\omega)\mathbf{u}_{m}(\sigma^{n}\omega) \rVert_{s}\,\right\rVert_{\text{row}}\nonumber\\
&=\left\lVert\sum_{j=0}^{nl-1} \left(\mathds{1}\mathbf{u}_{n}(\omega)\right)_{j}   \left\lVert \mathds{1}\mathbf{u}_{m}(\sigma^{n+j}\omega) \right\rVert_{s}  \right\rVert_{\text{row}}\nonumber\\
&\leq\sum_{j=0}^{nl-1}\left\lVert \left(\mathds{1}\mathbf{u}_{n}(\omega)\right)_{j}   \left\lVert \mathds{1}\mathbf{u}_{m}(\sigma^{n+j}\omega) \right\rVert_{s}\right\rVert_{\text{row}} \text{ by subadditivity of norms,}\nonumber\\
&\leq\sum_{j=0}^{nl-1} \left\lVert \left(\mathds{1}\mathbf{u}_{n}(\omega)\right)_{j}\right\rVert_{\text{row}}  \vertiii{\mathds{1}\mathbf{u}_{m}(\sigma^{n+j}\omega)}   \nonumber
\\&\hspace{4cm}\text{ by submultiplicativity of the row norm,}\nonumber\\
&\leq nl	\left\lVert \left(\mathds{1}\mathbf{u}_{n}(\omega)\right)_{j_{\max}(n,m,\omega)}\right\rVert_{\text{row}}  \vertiii{\mathds{1}\mathbf{u}_{m}(\sigma^{n+j_{\max}(n,m,\omega)}\omega)}	\label{nonexpectationsubmult}\\
&\hspace{4cm}\text{ for $j_{\max}$ maximising the sum,}\nonumber\\
&\leq cnl \vertiii{\mathds{1}\mathbf{u}_{n}(\omega)}\vertiii{\mathds{1}\mathbf{u}_{m}(\sigma^{n}\omega)}
\end{align}
The last inequality holds for some sufficiently large $c>0$ upon noting that for large $n,m$ the additional shift $j_{\max}$ becomes insignificant as the difference in growth is captured by the `overestimate' of the first term. Therefore we have quasi-subadditivity and by symmetry
\[
\vertiii{\mathds{1}\mathbf{u}_{n+m}(\omega)}\leq c m \vertiii{\mathds{1}\mathbf{u}_{n}(\omega)}\vertiii{\mathds{1}\mathbf{u}_{m}(\sigma^{n}\omega)},
\]
for some $c>0$.
Considering $\log\vertiii{\mathds{1}\mathbf{u}_{n}(\omega)}$ as a random variable, the subadditive defect becomes
\[
c_{m}=\log\vertiii{\mathds{1}\mathbf{u}_{n+m}(\omega)}-\log\vertiii{\mathds{1}\mathbf{u}_{n}(\omega)}-\log\vertiii{\mathds{1}\mathbf{u}_{m}(\sigma^{n}\omega)}\leq \log cm.
\]
Clearly $\E(\log cm)^{+}=\log cm$ and $c_{m}/m\to 0$. Since $\sigma$ is an (invariant) ergodic transformation with respect to $\mu$, applying Proposition~\ref{KingmanProp} finishes the proof. \qed

\subsection{Proof of Lemma \ref{rowSumsAllTheSame}}
The boundedness of the entries in the matrix entries of $\mathds{1}\mathbf{u}_{k}(\omega)$, combined with the linear growth of the number of positive entries of the vector, implies that for some constant $c>0$,
\[
\max_{j}\left\lVert(\mathds{1}\mathbf{u}_{k}(\omega))_{j}\right\rVert_{\text{row}}\leq  \vertiii{\mathds{1}\mathbf{u}_{k}(\omega)}\leq ck\max_{j}\left\lVert(\mathds{1}\mathbf{u}_{k}(\omega))_{j}\right\rVert_{\text{row}}.
\] 
Therefore the value of both terms increase at the same exponential rate.
In addition, the $j^{k}_{\max}$ maximising the norm cannot move arbitrarily with increasing $k$. First it must be increasing monotonically, although not necessarily strictly so. 
But the value can also not jump unboundedly, as the matrices that the matrix with maximal absolute norm is multiplied with have bounded entries as well. 
Even though we will not prove it here, it can be shown that almost surely $j_{\max}^{k}/(lk)\to \rho$ as $k\to\infty$ for some $\rho\in[0,1]$ dependent only on $\mathbf{\Gamma^{\epsilon}}$ and $\vec{\pi}$.
Let $R_{v}(k)$ be the row sum for row $v$ in the maximal matrix at multiplication step $k$ and $R_{v}^{T}(k)$ be the total of that row over all matrices. That is
\[
R_{v}(k)=\sum_{i=1}^{n}\left[(\mathds{1}\mathbf{u}_{k}(\omega))_{j_{\max}(n,m,\omega)}\right]_{v,i}
\;\;\;\text{ and }\;\;\;
R_{v}^{T}(k)=\sum_{j=1}^{\infty}\sum_{i=1}^{n}\left[(\mathds{1}\mathbf{u}_{k}(\omega))_{j}\right]_{v,i}.
\]
Furthermore let $R_{\max}(k)=\max_{v\in V}R_{v}(k)$. One immediately has on a full measure set 
\[
\left\lvert R_{\max}(k)^{1/k}- \vertiii{\mathds{1}\mathbf{u}_{k}(\omega)}^{1/k}\right\rvert\to0 \text{ as }k\to\infty,
\] 
so proving Lemma~\ref{rowSumsAllTheSame} can be achieved by showing $R_{v}(k)\asymp R_{\max}(k)$ holds almost surely for all $v\in V$. The upper bound $R_{v}(k)\leq R_{\max}(k)$ is trivial. 

For the lower bound, since $\mathbf{\Gamma}$ is stochastically strongly connected, i.e.\ satisfies Condition~\ref{strongConnected}, we can construct a finite word $\omega^{r}\in\Lambda^{*}$ that links all vertices, starting at $v=v_{1}$. 
That is $\omega^{r}=\omega^{v_{1},v_{2}}\omega^{v_{2},v_{3}}\dots\omega^{v_{n},v_{1}}\omega^{v_{1},v_{2}}\dots\omega^{v_{n-1},v_{n}}$. Clearly $\mu([\omega^{r}])>0$. 
Consider now the maximal element in the multiplication of $\mathbf{u}_{qk}(\omega)=\mathbf{u}_{k}(\omega)\dots\mathbf{u}_{k}(\sigma^{(q-1)k}\omega)$, that is $j_{\max}(qk,k,\omega)$.
There exists a random variable, the \emph{holding time} $H(i)$, that gives the number of multiplication steps $q$ between the $i-1$ and $i$th time such that $\omega^{r}$ is applied to that element. We have $\sigma^{qk+j_{\max}(qk, k,\omega)}(\omega)=\omega^{r}$. We can without loss of generality assume that $H(i)$ are i.i.d.\ random variables with finite expectation $\E H(i)<\infty$.
Let $W(k)$ be the waiting time for the $k$th jump, $W(k)=\sum_{i=0}^{k-1}H(i)$ and define $N_{k}$ to be the unique random integer such that 
\[
W(N_{k})\leq k < W(N_{k}+1).
\]

There exists a uniform constant $\underline\lambda>0$ such that, for all $v\in V$, 
\[
R_{v}(W(N_{k})+\lvert\omega^{r}\rvert)\geq\underline\lambda R_{\max}(W(N_{k})).
\]
Since this holds for all $k$ we can furthermore find a lower bound to the value of $R_{v}$ between occurrences of $\omega^{r}$ by considering the time it takes between occurrences. Condition~\ref{nontrivialgraph} implies non-extinction and there exists contraction rate $\underline\gamma>0$, such that for $k$ and $N_{k}$ as above we have
\[
\liminf_{k\to\infty}R_{v}^{T}(k)^{1/k}\geq \liminf_{k\to\infty}(\underline\lambda R_{\max}(W(N_{k}))\underline\gamma^{H(k)})^{1/k}
\geq\liminf_{k\to\infty}(\beta-\epsilon)^{W(N_{k})/k}\underline\gamma^{H(k)/k}
\]
where the last inequality holds on a set of measure $1$ for every $\epsilon>0$.
But we also have that 
\begin{align}
W(N_{k})/k\leq 1 < W(N_{k}+1)/k\nonumber
\end{align}
and as $W(N_{k})/k<1$ and $W(N_{k}+1)/k=W(N_{k})/k+H(N_{k}+1)/k$ we have by the law of large numbers that almost surely $W(N_{k})/k\to1$ and $H(k)/k\to0$, and hence on a set of measure 1, 
\[
\liminf_{k\to\infty}R_{v}^{T}(k)^{1/k}\geq(\beta-\epsilon)
\] for every $\epsilon$ and $v$. Noting that $R_{v}^{T}(k)\asymp R_{v}(k)$ completes the proof. 
\qed

\subsection{Proof of Lemma \ref{almostSurePressureProperties}}
The almost sure convergence of $\mathbf{\Psi}(s,\epsilon)$ follows directly from Lemma~\ref{convergenceLemma} and we now show that $\mathbf{\Psi}_\omega(s,\epsilon)$ is monotonically decreasing in $s$ and continuous for almost all $\omega\in\Omega$.
Consider an arbitrary Hutchinson-Moran sum that arises in the Hutchinson-Moran-like matrix in~(\ref{HutchinsonMatrix}),
\[
\sum_{e\in(\tensor*[_{i}]{E}{^{q}_{j}}(\omega,\epsilon))}c_{e}^{s}.
\]
We immediately get 
\begin{equation}\label{HutchinMatrixIneq}
\sum_{e\in(\tensor*[_{i}]{E}{^{q}_{j}}(\omega,\epsilon))}c_{e}^{s+\delta}\leq\overline{\gamma}^{\delta}_{q}\sum_{e\in(\tensor*[_{i}]{E}{^{q}_{j}}(\omega,\epsilon))}c_{e}^{s}\text{, where }
\overline\gamma_{q}(\omega)=\max_{\substack{i,j\in\{1,\hdots,n\} \\ \mathbf{e}\in(\tensor*[_{i}]{E}{^{q}_{j}}(\omega,\epsilon))}} c_\mathbf{e}.
\end{equation}
For $\epsilon>0$ there are only finitely many different $p_q^s(\omega,\epsilon)$ and $\mathbf{p}^s(\omega)$, see the discussion of Lemma~\ref{lma:stoppingrepl}. 
Thus we can find 
\begin{equation}
\overline\gamma=\max_{\substack{q\in\{1,\hdots,l\}\\ \omega\in\Omega}}\overline\gamma_{q}(\omega),
\end{equation}
where $0<\overline\gamma<1$. Similarly we can find the minimal such contraction $0<\underline\gamma\leq\overline\gamma<1$. Combining this with~(\ref{HutchinMatrixIneq}) we surely deduce, in turn,
\begin{align}
\underline\gamma^{\delta}p_q^{s}(\omega,\epsilon) &\leq p_q^{s+\delta}(\omega,\epsilon)\leq \overline\gamma^{\delta}p_q^{s}(\omega,\epsilon),\nonumber
\\
\underline\gamma^{\delta}\mathbf{p}^{s}(\omega,\epsilon) &\leq \mathbf{p}^{s+\delta}(\omega,\epsilon)\leq \overline\gamma^{\delta}\mathbf{p}^{s}(\omega,\epsilon),\nonumber
\\\label{pressureMatrixinEq}
\underline\gamma^{\delta}\mathbf{P}_\epsilon^s(\omega)&\leq\mathbf{P}_\epsilon^{s+\delta}(\omega)\leq\overline\gamma^{\delta}\mathbf{P}_\epsilon^{s}(\omega),
\end{align}
where $\leq$ is taken to be entry-wise, i.e.\ for matrices $M\leq N$ if and only if $M_{i,j}\leq N_{i,j}$ for all $i,j$.
Using~(\ref{pressureMatrixinEq}) we can bound the $s+\delta$ pressure
\[
{\Psi}^k_\omega(s+\delta,\epsilon)=\vertiii{\mathds{1} \mathbf{P}^{s+\delta}_{\epsilon}(\omega) \hdots  \mathbf{P}^{s+\delta}_{\epsilon}(\sigma^{k-1}\omega)}^{1/k}
\]
\[
\geq\underline\gamma^\delta\vertiii{\mathds{1} \mathbf{P}^{s}_{\epsilon}(\omega)  \hdots \mathbf{P}^{s}_{\epsilon}(\sigma^{k-1}\omega) }^{1/k}
\geq\underline\gamma^{\delta}\Psi^{k}_{\omega}(s,\epsilon),
\]
and similarly for the upper bound we have ${\Psi}^k_\omega(s+\delta,\epsilon)\leq\overline\gamma\Psi^{k}_{\omega}(s,\epsilon)$.
Therefore, if the limit exists, $\underline\gamma^{\delta}\mathbf{\Psi}_{\omega}(s,\epsilon)\leq\mathbf{\Psi}_{\omega}(s+\delta,\epsilon)\leq\overline\gamma^{\delta}\mathbf{\Psi}_{\omega}(s,\epsilon)$. Thus as $0<\underline\gamma\leq\overline\gamma<1$, $\mathbf{\Psi}_{\omega}(s,\epsilon)$ is strictly decreasing in $s$ and, taking $\delta\to0$, is easily seen to be continuous for almost every $\omega$ and thus $\mathbf{\Psi}(s,\epsilon)$ has the same property.
Letting $\delta\to\infty$ we see $\mathbf{\Psi}(s+\delta,\epsilon)\to0$ and $\mathbf{\Psi}(0,\epsilon)\geq1$ by the non-extinction given by Condition~\ref{nontrivialgraph}. The existence and uniqueness of $s_{H,\epsilon}$ then follows.
\qed

\subsection{Proof of Theorem \ref{upperBox}}\label{upperBoxProof}
Note that the proof below directly implies that the box dimension exists almost surely.

Our argument relies on a supermultiplicative property of approximations of $\epsilon$-stopping graphs given by~(\ref{submultEq}). Before we derive that expression we establish a connection between the least number of sets of diameter $\epsilon$ or less needed to cover our attractor $N_{\epsilon}(K_{v}(\omega))$ and the number of edges of our $\epsilon$-stopping graph $\lvert \tensor[_{v}]{E}{}(\omega,\epsilon)\rvert$.
By the definition of the $\epsilon$-stopping graph we have that for all $\mathbf{e}\in\tensor[_{v}]{E}{}(\omega,\epsilon)$ the diameter of $S_\mathbf{e}(\Delta)$ is of order $\epsilon$, see Definition~\ref{stoppingGraphDefn}. 
Since we also have that the images of the stopping $\{S_\mathbf{e}(\Delta)\}_{\mathbf{e}\in\tensor[_{v}]{E}{}(\omega,\epsilon)}$ are pairwise disjoint, $\{S_\mathbf{e}(\Delta)\}_{\mathbf{e}\in\tensor[_{v}]{E}{}(\omega,\epsilon)}$ may not form a cover of $K_{v}(\omega)$.
But since the construction is maximal, the image of any word (edge) that was deleted must intersect another image of a word that was kept, which means that to form a cover of $K_{v}(\omega)$ one needs at most $3^{d}\lceil\underline \gamma^{-1}\rceil\lvert \tensor[_{v}]{E}{}(\omega,\epsilon)\rvert$ $d$-dimensional hypercubes of sidelength $\epsilon$ to form a cover and hence $N_{\epsilon}(K_{v}(\omega))\leq 3^{d}\lceil\underline \gamma^{-1}\rceil\lvert \tensor[_{v}]{E}{}(\omega,\epsilon)\rvert$.
On the other hand, any element in the minimal cover for $N_{\epsilon}(K_{v}(\omega))$ can intersect at most a uniformly bounded number of elements in $\{S_\mathbf{e}(\Delta)\}_\mathbf{e}$, as otherwise the elements in $\{S_\mathbf{e}(\Delta)\}_\mathbf{e}$ would intersect. Hence there exists $k_{\min}>0$ such that 
$N_{\epsilon}(K_{v}(\omega))\geq k_{\min}\lvert \tensor[_{v}]{E}{}(\omega,\epsilon)\rvert$ and we get the required 
\begin{equation}\label{boxasymp}
N_{\epsilon}(K_{v}(\omega))\asymp\lvert \tensor[_{v}]{E}{}(\omega,\epsilon)\rvert.
\end{equation}
Using the notation of the Hutchinson-Moran matrices introduced in (\ref{HutchinsonMatrix}), we can see that for $s=0$, we have $c_\mathbf{e}^{s}=1$ and thus the Hutchinson matrix $\mathbf{P}_{\epsilon}^{0}(\omega)$ `counts' the number of images in $\tensor[]{E}{}(\omega,\epsilon)$. We have
\[
 \lvert \tensor[_{v}]{E}{}(\omega,\epsilon)\rvert=\sum_{w\in V}\left(\sum_{j}(\mathds{1}\mathbf{P}_{\epsilon}^{0}(\omega))_{j}\right)_{(v,w)}.
\]
The sum above behaves in a supermultiplicative fashion: for some constant $k_{s}>0$ and all $\epsilon,\delta>0$,
\begin{equation}\label{submultEq}
\sum_{w\in V}\left(\sum_{j}(\mathds{1}\mathbf{P}_{\delta\epsilon}^{0}(\omega))_{j}\right)_{(v,w)}\geq k_{s}\sum_{w\in V}\left(\sum_{j}(\mathds{1}\mathbf{P}_{\epsilon}^{0}(\omega)\mathbf{P}_{\delta}^{0}(\sigma\omega))_{j}\right)_{(v,w)}.
\end{equation}
By definition $\bigoplus\mathds{1}_{\epsilon_{0}}\mathbf{H}^{\epsilon}(\omega)$ is the arrangement of words that describe the cylinders of $\{K_{v}(\omega)\}_{v\in V}$. Consider an arbitrary word $\mathbf{e}_{1}\mathbf{e}_{2}\in\bigoplus\mathds{1}_{\epsilon_{0}}\mathbf{H}^{\epsilon}(\omega)\mathbf{H}^{\delta}(\sigma\omega)$, where $\mathbf{e}_{1}\in\bigoplus\mathds{1}_{\epsilon_{0}}\mathbf{H}^{\epsilon}(\omega)$ and $\mathbf{e}_{2}\in\bigoplus\mathbf{H}^{\delta}(\sigma\omega)$. 
Assume $\mathbf{e}_{1}$ is the $(i,j)$th entry of the matrix at position $k$ of the vector $\mathds{1}_{\epsilon_{0}}\mathbf{H}^{\epsilon}(\omega)$. 
Since $\mathbf{e}_{1}\mathbf{e}_{2}$ is obtained by regular matrix multiplication, we have that $\mathbf{e}_{2}$ is an entry in one of the matrices in the $k$-th row of $\mathbf{H}^{\delta}(\sigma\omega)$, $\mathbf{e}_{2}\in \hat\eta^{k}(\sigma^{k}\omega,\delta)$. Therefore, for some $v_{1},v_{2},v_{3}\in V$, we have $\mathbf{e}_{1}\in\tensor*[_{v_{1}}]{E}{^{k}_{v_{2}}}(\omega,\epsilon)$ and $\mathbf{e}_{2}\in\tensor*[_{v_{2}}]{E}{_{v_{3}}}(\sigma^{k}\omega,\delta)$. Hence $\mathbf{e}_{1}\mathbf{e}_{2}$ describes a path of $\mathbf{\Gamma}$ for realisation $\omega$ and therefore codes a cylinder of $K_{v_{1}}(\omega)$, and as $c_{\min}\epsilon<c_{\mathbf{e}_{1}}\leq\epsilon$ and $c_{\min}\delta<c_{\mathbf{e}_{2}}\leq\delta$ we additionally have $c_{\min}^{2}\delta\epsilon<c_{\mathbf{e}_{1}\mathbf{e}_{2}}\leq\delta\epsilon$. Recall that $\mathfrak{R}^{s}$ was the operator mapping arrangements of words to the length of the associated image under $S$ to the power $s$. Therefore, applying $\mathfrak{R}^{0}$ to $(\mathds{1}_{\epsilon_{0}}\mathbf{H}^{\epsilon}(\omega)\mathbf{H}^{\delta}(\sigma\omega))$, we can express the number of cylinders starting at a given vertex $v$ by
\[
\sum_{w\in V}\left(\sum_{j}(\mathds{1}\mathbf{P}_{\epsilon}^{0}(\omega)\mathbf{P}_{\delta}^{0}(\sigma\omega))_{j}\right)_{(v,w)}.
\]
Obviously these cylinders do not intersect but they do not quite form an $\epsilon\delta$-stopping graph as some of the edges might have contraction rate 
$c_{\min}^{2}\delta\epsilon<c_{\mathbf{e}_{1}\mathbf{e}_{2}}\leq c_{\min}\delta\epsilon$. However this does not present a problem as one needs to only avoid at most the last branching to recover an $\epsilon\delta$-stopping graph. Note that the number of subbranches is surely bounded and therefore there exists a constant $k_{s}$, which is the inverse of this maximal splitting bound, such that we have an $\epsilon\delta$-stopping graph that may not be maximal, hence giving rise to the inequality~(\ref{submultEq}).

\vspace{0.5cm}
Now given any $\epsilon>\delta>0$ there exists unique $q\in\N$ and $1\geq\xi>\epsilon$ such that $\delta=\epsilon^{q}\xi$. One can easily generalise equation~(\ref{submultEq}), using above argument, to show that
\begin{equation}\label{generalSubMult}
\sum_{w\in V}\left(\sum_{j}(\mathds{1}\mathbf{P}_{\delta}^{0}(\omega))_{j}\right)_{(v,w)}\geq k_{s}^{q}\sum_{w\in V}\left(\sum_{j}(\mathds{1}\mathbf{P}_{\xi}^{0}(\omega)\mathbf{P}_{\epsilon}^{0}(\sigma\omega)\dots \mathbf{P}_{\epsilon}^{0}(\sigma^{q-1}\omega))_{j}\right)_{(v,w)}.
\end{equation}
The relationship between the expression above and the exponent $\epsilon$ can be found by an argument akin to that in the proof of Fekete's Lemma, see \cite[\S1 Problem 98]{PolyAnalysisProblems}. Consider
\begin{align}
\frac{\log\sum_{w\in V}\left(\sum_{j}(\mathds{1}\mathbf{P}_{\delta}^{0}(\omega))_{j}\right)_{(v,w)}}{-\log\delta}
&=\frac{\log\sum_{w\in V}\left(\sum_{j}(\mathds{1}\mathbf{P}_{\epsilon^{q}\xi}^{0}(\omega))_{j}\right)_{(v,w)}}{-q\log\epsilon-\log\xi}\nonumber
\end{align}
\begin{align}
&\geq \frac{\log k_{s}+\log\left(\sum_{w\in V}\left(\sum_{j}(\mathds{1}\mathbf{P}_{\xi}^{0}(\omega)\mathbf{P}_{\epsilon}^{0}(\sigma\omega)\dots \mathbf{P}_{\epsilon}^{0}(\sigma^{q}\omega))_{j}\right)_{(v,w)}\right)^{1/q}}{-\log\epsilon-(1/q)\log\xi}\nonumber.
\end{align}
Thus for every $\epsilon>0$, assuming almost sure convergence and stochastically strongly connected graphs,  
\[
\liminf_{\delta\to0}\frac{\log\sum_{w\in V}\left(\sum\mathds{1}\mathbf{P}_{\delta}^{0}(\omega)\right)_{(v,w)}}{-\log\delta}\geq
\lim_{\epsilon\to0}\frac{\log k_{s}+\log\mathbf{\Psi}(0,\epsilon)}{-\log\epsilon}
\geq \sup_{\epsilon>0}\frac{\log\mathbf{\Psi}(0,\epsilon)}{-\log\epsilon},
\]
holding almost surely. For the upper bound simply note that, almost surely,
\[
\limsup_{\delta\to0}\frac{\log\sum_{w\in V}\left(\sum\mathds{1}\mathbf{P}_{\delta}^{0}(\omega)\right)_{(v,w)}}{-\log\delta}\leq\sup_{\delta>0}\frac{\log\mathbf{\Psi}(0,\delta)}{-\log\delta}.
\]
Therefore, almost surely,
\[
\frac{\log\sum_{w\in V}\left(\sum\mathds{1}\mathbf{P}_{\delta}^{0}(\omega)\right)_{(v,w)}}{-\log\delta}\to \sup_{\epsilon>0}\frac{\log\mathbf{\Psi}(0,\epsilon)}{-\log\epsilon}\;\;\;\text{ as }\delta\to\infty.
\]
Due to~(\ref{boxasymp}) we get the required almost sure result:
\[
\dim_{B}K_{v}(\omega)=\sup_{\epsilon>0}\frac{\log\mathbf{\Psi}(0,\epsilon)}{-\log\epsilon}.
\]
\qed

\subsection{Proof of Theorem \ref{lowerHausdorff}}
While the construction introduced in Section~\ref{infiniteMatrixSection} with norm $\vertiii{.}$ makes sense in establishing the box counting dimension of RGDS attractors where we wanted all cylinders of diameter comparable to some $\epsilon>0$, we can also rewrite the system as a finite graph directed system. 
We employ this idea here to find the lower bound to the Hausdorff dimension of $K_{v,\epsilon}(\omega)$ by constructing a measure on cylinders obtained in this finite fashion. 
Since $K_{v,\epsilon}(\omega)\subseteq K_{v}(\omega)$, the Hausdorff dimension for the approximation will give a lower bound for the Hausdorff dimension of $K_{v}(\omega)$. We shall use the $\vertiii{.}_{(1,1)}$ seminorm defined in (\ref{square1norm}) on finite matrices with matrix entries.

Consider the system given by the states $A_{1}, H_{2}, H_{3}, \hdots, H_{k_{\max}(\epsilon)}$, where $k_{\max}(\epsilon)$ is the maximal length of column specified by $\epsilon$, see Section~\ref{infiniteMatrixSection}. The corresponding graph is shown in Figure~\ref{waitingTimeFigure}. We record words in either the active ($A_{1}$) or a holding state ($H_{i}$) as a $k_{\max}(\epsilon)$-vector with matrix entries and the action given from the active state by right multiplication of $\mathbf{C}_{\epsilon}(\omega)$ and $\mathbf{W}_{\epsilon}^{s}(\omega)=\mathfrak{R}^{s}\mathbf{C}_{\epsilon}(\omega)$, where 
\[
\mathbf{C}_{\epsilon}(\omega)=\begin{pmatrix}
\eta_{1}(\omega,\epsilon)	& \eta_{2}(\omega,\epsilon)	&\hdots	&	\eta_{k_{\max}(\omega)}(\omega,\epsilon)\\
\mathbf{1}_{\epsilon_{0}}	& \mathbf{0}_{\varnothing}	&\hdots	&	\mathbf{0}_{\varnothing}\vphantom{\vdots}\\
\mathbf{0}_{\varnothing}	& \mathbf{1}_{\epsilon_{0}}	&\hdots	&	\mathbf{0}_{\varnothing}\vphantom{\vdots}\\
\vdots	&	&	\ddots&\vdots\\
\mathbf{0}_{\varnothing}	&\mathbf{0}_{\varnothing}	&\hdots&\mathbf{1}_{\epsilon_{0}}\vphantom{\vdots}\\
\mathbf{0}_{\varnothing}	&\mathbf{0}_{\varnothing}	&\hdots&\mathbf{0}_{\varnothing}\vphantom{\vdots}
\end{pmatrix}
\]
and
\[
\mathbf{W}^{s}_{\epsilon}(\omega)=\begin{pmatrix}
\mathbf{p}^{s}_{1}(\omega,\epsilon)	 &\hdots	&	\mathbf{p}^{s}_{k_{\max}(\omega)}(\omega,\epsilon)\\
\mathbf{1}	&\hdots	&	\mathbf{0}\vphantom{\vdots}\\
\vdots	&	\ddots&\vdots\\
\mathbf{0}	&\hdots&\mathbf{1}\vphantom{\vdots}\\
\mathbf{0}	&\hdots&\mathbf{0}\vphantom{\vdots}
\end{pmatrix}.
\]
We are now interested in analysing the cylinders given by the (finite) arrangement of words $\mathbf{D}_{\epsilon}^{k}(\omega)$ and the norm of its Hutchinson-Moran matrix $\mathfrak{R}^{s}\mathbf{D}_{\epsilon}^{k}(\omega)$,
\[
\mathbf{D}_{\epsilon}^{k}(\omega)=\mathds{1}_{\epsilon_{0}}\mathbf{C}_{\epsilon}(\omega)\mathbf{C}_{\epsilon}(\sigma\omega)\dots\mathbf{C}_{\epsilon}(\sigma^{k-1}\omega) \text{ and }
{\Phi}^{k}_{\epsilon}(s)=\vertiii{\mathds{1}\mathbf{W}^{s}_{\epsilon}(\omega)\dots\mathbf{W}^{s}_{\epsilon}(\sigma^{k-1}\omega)}_{(1,1)}.
\]
\begin{figure}
\label{waitingTimeFigure}
\begin{tikzpicture}[->,>=stealth',shorten >=1pt,auto,
  thick,main node/.style={circle,fill=blue!20,draw,font=\sffamily\Large\bfseries}]

\node(1){$A_{1}$};
\node(2)[right of=1,node distance=4cm]{$H_{2}$};
\node(3)[above of=2,node distance=1.5cm]{$H_{3}$};
\node(4)[above of=3,node distance=1.5cm]{$H_{4}$};
\node(5)[above of=4,node distance=1.5cm]{$\vdots$};
\node(6)[above of=5,node distance=1.5cm]{$H_{l}$};

\path[every node/.style={font=\sffamily\small}]
(1)edge node [near end]{${\eta_{2}}(\omega)$}(2)
edge node [very near end] {$\eta_{3}(\omega)$}(3)
edge node [near end]{${\eta_{4}}(\omega)$}(4)
edge node [midway] {${\eta_{l}}(\omega)$}(6)
edge [loop left] node {${\eta_{1}}(\omega)$}(1)
(2) edge [bend left] node {$\mathbf{1}_{\epsilon_{0}}$}(1)
(3) edge [bend left] node {$\mathbf{1}_{\epsilon_{0}}$}(2)
(4) edge [bend left] node {$\mathbf{1}_{\epsilon_{0}}$}(3)
(5) edge [bend left] node {$\mathbf{1}_{\epsilon_{0}}$}(4)
(6) edge [bend left] node {$\mathbf{1}_{\epsilon_{0}}$}(5)
;
\end{tikzpicture}\hspace{0.0cm}
\begin{tikzpicture}[->,>=stealth',shorten >=1pt,auto,
  thick,main node/.style={circle,fill=blue!20,draw,font=\sffamily\Large\bfseries}]

\node(1){$A_{1}$};
\node(2)[right of=1,node distance=4cm]{$H_{2}$};
\node(3)[above of=2,node distance=1.5cm]{$H_{3}$};
\node(4)[above of=3,node distance=1.5cm]{$H_{4}$};
\node(5)[above of=4,node distance=1.5cm]{$\vdots$};
\node(6)[above of=5,node distance=1.5cm]{$H_{l}$};

\path[every node/.style={font=\sffamily\small}]
(1)edge node [near end]{${\mathbf{p}^{s}_{2}}(\omega)$}(2)
edge node [very near end] {$\mathbf{p}^{s}_{3}(\omega)$}(3)
edge node [near end]{${\mathbf{p}^{s}_{4}}(\omega)$}(4)
edge node [midway] {${\mathbf{p}^{s}_{l}}(\omega)$}(6)
edge [loop left] node {${\mathbf{p}^{s}_{1}}(\omega)$}(1)
(2) edge [bend left] node {$\mathbf{1}$}(1)
(3) edge [bend left] node {$\mathbf{1}$}(2)
(4) edge [bend left] node {$\mathbf{1}$}(3)
(5) edge [bend left] node {$\mathbf{1}$}(4)
(6) edge [bend left] node {$\mathbf{1}$}(5)
;
\end{tikzpicture}
\caption{Graph for the finite model used in establishing the lower bound.}
\end{figure}
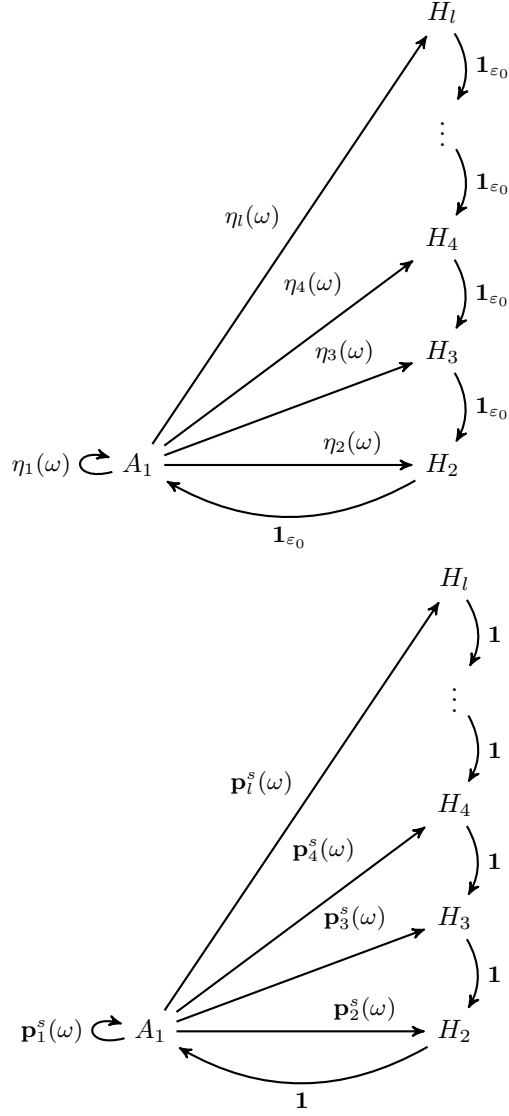
We first show
\begin{lma}
On a subset of $\Omega$ with full measure we have, for all $\epsilon>0$,
\[
\mathbf{\Phi}_{\epsilon}(s):=\lim_{k\to\infty}({\Phi}^{k}_{\epsilon}(s))^{1/k}=1\;\;\;\text{ if and only if }\;\;\; \mathbf{\Psi}(s,\epsilon)=1.
\]
\end{lma}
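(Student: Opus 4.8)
\emph{Plan of proof.} The two constructions enumerate the same set of $\epsilon$-stopping-graph words, graded differently: the factors $\mathbf{P}^s_\epsilon(\sigma^{j-1}\omega)=\mathfrak{R}^s\mathbf{H}^\epsilon(\sigma^{j-1}\omega)$ append one $\epsilon$-stopping edge per multiplication, the position in the infinite vector recording the length of the underlying $\mathbf{\Gamma}$-path, whereas the factors $\mathbf{W}^s_\epsilon(\sigma^{j-1}\omega)=\mathfrak{R}^s\mathbf{C}_\epsilon(\sigma^{j-1}\omega)$ advance the underlying $\mathbf{\Gamma}$-clock by one per multiplication, the holding states $H_2,\dots,H_{k_{\max}(\epsilon)}$ parking a word for exactly the number of steps its current stopping edge still requires. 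Write $L=k_{\max}(\epsilon)$, let $N_k(s)=\sum_{\mathbf e}c_{\mathbf e}^s$ run over $\epsilon$-stopping words using exactly $k$ edges, and let $M_m(s)=\sum_{\mathbf e}c_{\mathbf e}^s$ run over such words whose underlying $\mathbf{\Gamma}$-path has length exactly $m$. The first, routine, step is to read off from the definitions of $\mathbf{H}^\epsilon$, $\mathbf{C}_\epsilon$, $\mathfrak{R}^s$ and the seminorms, using the row-sum comparison of Lemma~\ref{rowSumsAllTheSame} (available because $\mathbf{\Gamma}^\epsilon$ again satisfies Condition~\ref{strongConnected}), that $N_k(s)^{1/k}\to\mathbf{\Psi}(s,\epsilon)$ and that $M_k(s)\le\Phi^k_\epsilon(s)\le\sum_{m=k}^{k+L-1}M_m(s)$; the right-hand chain holds because a completed word of $\mathbf{\Gamma}$-length $m$ whose final edge has length $\ell$ sits, carrying its full weight $c_{\mathbf e}^s$, in state $H_{m-k+1}$ at step $k$ for exactly those $k$ with $m-\ell+1\le k\le m$.

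\emph{Regrouping.} Since every edge of $\mathbf{\Gamma}^\epsilon$ corresponds to a $\mathbf{\Gamma}$-path of length in $\{1,\dots,L\}$, a word with $k$ edges has $\mathbf{\Gamma}$-length in $[k,kL]$ and a word of $\mathbf{\Gamma}$-length $m$ has between $\lceil m/L\rceil$ and $m$ edges; hence $M_m(s)\le\sum_{k=\lceil m/L\rceil}^{m}N_k(s)$, while $N_k(s)$ is a sum of at most $kL$ non-negative pieces indexed by $\mathbf{\Gamma}$-length, one of which exceeds $N_k(s)/(kL)$ and is bounded above by $M_{m_k}(s)$ for some $m_k$ with $k\le m_k\le kL$. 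Combining this with the bounds above: if $\mathbf{\Psi}(s,\epsilon)<1$ then $N_k(s)\le C\rho^k$ for some $\rho<1$, so $M_m(s)\le C'\rho^{m/L}$ and $\Phi^k_\epsilon(s)\le LC'\rho^{k/L}$, giving $\mathbf{\Phi}_\epsilon(s)\le\rho^{1/L}<1$; if $\mathbf{\Psi}(s,\epsilon)>1$ then $N_k(s)\ge c\rho^k$ for some $\rho>1$, so $\Phi^{m_k}_\epsilon(s)\ge M_{m_k}(s)\ge c\rho^k/(kL)\ge c\rho^{m_k/L}/(m_kL)$ and, since $k\le m_k\le kL$ with $k\to\infty$, $\mathbf{\Phi}_\epsilon(s)\ge\rho^{1/L}>1$; finally, running the same two estimates with $\rho\to1$ shows that $\mathbf{\Psi}(s,\epsilon)=1$ forces $\mathbf{\Phi}_\epsilon(s)=1$, and the converse implication $\mathbf{\Phi}_\epsilon(s)=1\Rightarrow\mathbf{\Psi}(s,\epsilon)=1$ is the conjunction of the contrapositives of the two strict implications. (Intersecting over a dense sequence of $\epsilon$ and using continuity gives one full-measure set valid for all $\epsilon>0$, as elsewhere in the paper.)

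\emph{The main obstacle.} The genuine point, to be settled first, is the almost sure existence of the limit $\mathbf{\Phi}_\epsilon(s)=\lim_k(\Phi^k_\epsilon(s))^{1/k}$ — equivalently that $M_k(s)^{1/k}$ converges, so the sandwich above pins down a value rather than merely a $\limsup$. This is of the type already treated: $\mathbf{C}_\epsilon$ is a fixed finite-dimensional array over $\Omega$ with non-negative matrix entries whose transition graph is strongly connected, so the quasi-submultiplicativity estimate in the proof of Lemma~\ref{convergenceLemma} together with Kingman's subadditive ergodic theorem (Proposition~\ref{KingmanProp}) delivers the limit. The one care needed is that $\vertiii{\mathds{1}_l\,\cdot\,}_{(1,1)}$ is only a seminorm and that $(\mathds{1}_l M)N$ depends on the full vector $\mathds{1}_l M$ rather than on $\lVert\mathds{1}_l M\rVert_s$ alone; but on this fixed finite space all the quantities in play are mutually comparable, up to constants depending only on $L$ and $n=|V|$, to a genuine submultiplicative matrix norm and, along the construction, to individual row sums (exactly as in Lemma~\ref{rowSumsAllTheSame}), which suffices to run the argument. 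Everything beyond this is the elementary regrouping above.
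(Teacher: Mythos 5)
Your argument is correct and is essentially the paper's own proof in different clothing: grading the stopping words by edge count ($N_k$) versus underlying $\mathbf{\Gamma}$-length ($M_m$) and sandwiching $\Phi^k_\epsilon(s)$ between $M_k(s)$ and $\sum_{m=k}^{k+L-1}M_m(s)$ is exactly the off-diagonal inclusion (\ref{complicatedInclusion}), and the subsequent window comparison $k\le m_k\le kL$ followed by the three-case analysis on $\mathbf{\Psi}(s,\epsilon)$ matches the paper step for step. The one structural difference is your preliminary Kingman argument for the existence of $\mathbf{\Phi}_\epsilon(s)$, which the paper does not carry out and which is not needed: in the only case where the limit is asserted to exist ($\mathbf{\Psi}(s,\epsilon)=1$) the sandwich already pins both $\limsup$ and $\liminf$ to $1$, while in the other cases one-sided bounds on $\limsup$ or $\liminf$ suffice to rule out the value $1$.
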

Note that these two notions of pressure do not, in general, coincide for $s$ when $\mathbf{\Phi}_{\epsilon}(s)\neq1$.
\begin{proof}
The procedure of picking the multiplications that are applied to the active state $A_{1}$ is determined by the first $k_{\max}(\epsilon)$ letters of $\omega$, where the individual entries of $\omega$ were chosen independently from $\Lambda$ according to $\vec \pi$. However, one can without loss of generality assume that the matrices picked are given by a stochastic process that is Markov. To see this let $\Lambda^{\ddagger}$ be a new alphabet consisting of $\lvert\Lambda\rvert^{k_{\max}(\epsilon)}$ elements. These elements represent all the different strings one can have that determine the matrices chosen. The full shift on $\Omega$ now induces a subshift of finite type on $(\Lambda^{\ddagger})^{\N}$ and $\mu$ gives a new Markov measure $\mu^{\ddagger}$ with appropriate transition probabilities. It is a simple exercise to show that this subshift is also topologically mixing and we omit it here.

The cylinders given by $\mathbf{D}_{\epsilon}^{k}(\omega)$ still exhaust all paths (compare with Lemma~\ref{lma:stoppingrepl}), however they may no longer have comparable diameter. Given that it is a stopping set we can find certain inclusions if we compare the arrangement of words of this finite model with the arrangement of words coming from the infinite construction. Let $\mathbf{U}^{k}_{\epsilon}(\epsilon)=\mathbf{H}^{\epsilon}(\omega)\mathbf{H}^{\epsilon}(\sigma\omega)\dots\mathbf{H}^{\epsilon}(\sigma^{k-1}\omega)$. Then
\begin{equation}\label{complicatedInclusion}
\bigoplus_{i=1}^{\lfloor (k+1)/l\rfloor+1}(\mathds{1}_{\epsilon_{0}}\mathbf{U}_{\epsilon}^{k-i-1}(\omega))_{i}
\subseteq
\bigoplus \mathbf{D}_{\epsilon}^{k}(\omega).
\end{equation}
To see this inclusion we refer the reader back to Figure~\ref{layeredWords}. The arrangement $\mathbf{D}_{\epsilon}^{k}(\omega)$ corresponds to taking the off-diagonal of entries that have been decided up to the $k$th shift.
The left hand side of (\ref{complicatedInclusion}) are exactly those words that were in state $A_{1}$ at the $(k-1)$th shift and are part of the same off-diagonals in Figure~\ref{layeredWords}.

The diagonal must also intersect with an element that is within some uniform constant $c>0$ of the maximal element on some level $d_{k}$ from $\lfloor (k+1)/l\rfloor+1$ to $k$, giving the following inclusion:
\[
\bigoplus \mathbf{D}_{\epsilon}^{k}(\omega)
\subseteq
\bigoplus_{i=\lfloor (k+1)/l\rfloor+1}^{k} \;\;\bigoplus_{j\in\N} (\mathds{1}_{\epsilon_{0}}\mathbf{U}_{\epsilon}^{i}(\omega))_{j}.
\]
Applying the operator $\mathfrak{R}^{s}$ we get the inequalities
\begin{align}
\sum_{i=1}^{\lfloor (k+1)/l\rfloor+1}\lVert(\mathds{1}\mathbf{u}_{k-i-1}(\omega))_{i}\rVert_{1}
\leq
\Phi^{k}_{\epsilon}(s)
&\leq
\sum_{i=\lfloor (k+1)/l\rfloor+1}^{k}\;\; \sum_{j\in\N} \lVert(\mathds{1}\mathbf{u}_{i}(\omega))_{j}\rVert_{1}\label{eqn:diagonalbounds}\\
&\leq
n\sum_{i=\lfloor (k+1)/l\rfloor+1}^{k}\vertiii{\mathds{1}\mathbf{u}_{i}(\omega)}
.\nonumber
\end{align}
Let $m_{k}$ refer to the level for which $\mathbf{\Psi}_{m_{k}}(s)=\max_{i\in\{\lfloor (k+1)/l\rfloor+1,\dots,k\}}\mathbf{\Psi}_{i}(s)$ and $d_{k}$ be as above, then~(\ref{eqn:diagonalbounds}) becomes
\begin{align}
\lVert(\mathds{1}\mathbf{u}_{d_{k}}(\omega))_{d_{k}}\rVert_{1}&\leq\Phi_{\epsilon}^{k}(s)\leq nk\vertiii{\mathds{1}\mathbf{u}_{m_{k}}(\omega)}\nonumber\\
\frac{1}{k}\Psi_{\omega}^{d_{k}}(s,\epsilon)&\leq\Phi_{\epsilon}^{k}(s)\leq nk\Psi^{m_{k}}_{\omega}(s,\epsilon)\nonumber\\
k^{-1/k}\Psi_{\omega}^{d_{k}}(s,\epsilon)^{1/k}&\leq\Phi_{\epsilon}^{k}(s)^{1/k}\leq (nk)^{1/k}\Psi^{m_{k}}_{\omega}(s,\epsilon)^{1/k}.\nonumber
\end{align}
Now assume $s$ is such that $\mathbf{\Psi}_{\omega}(s,\epsilon)=1$ for all $\omega\in \mathcal U$, where $\mathcal U$ is a set of measure one. Now,
\[
\limsup_{k}{\Phi}^{k}_{\epsilon}(s)^{1/k}\leq \limsup_{k}(nk)^{1/k}{\Psi}_{\omega}^{m_{k}}(s,\epsilon)^{1/k}\leq\limsup_{k} k^{1/k}{\Psi}_{\omega}^{m_{k}}(s,\epsilon)^{1/m_{k}}=1
\]
And similarly
\[
\liminf_{k}{\Phi}_{\epsilon}^{k}(s)^{1/k}\geq\liminf_{k} k^{-1/k}{\Psi}_{\omega}^{d_{k}}(s,\epsilon)^{1/k}\geq\liminf_{k} k^{-1/k}{\Psi}_{\omega}^{d_{k}}(s,\epsilon)^{1/d_{k}}=1
\]
Thus $\mathbf{\Psi}(s,\epsilon)=1\Rightarrow\mathbf{\Phi}_{\epsilon}(s)=1$. To establish the other direction just note that if $s$ is such that $\mathbf{\Psi}(s,\epsilon)<1$, then eventually ${\Psi}_{\omega}^{k}(s,\epsilon)^{1/k}\leq1-\delta$ for all $\omega\in\mathcal{U}$ and $\delta>0$ and $k$ large enough and so ${\Psi}_{\omega}^{k'}(s,\epsilon)\leq 1-\delta$ for large enough $k'\geq k$. This gives 
\begin{align}
\limsup_{k}{\Phi}_{k}(s)^{1/k} &\leq\limsup_{k} k^{1/k}\Psi_{\omega}^{m_{k}}(s,\epsilon)^{1/k}\nonumber\\
&\leq\limsup_{k} k^{1/k}\Psi_{\omega}^{m_{k}}(s,\epsilon)^{1/(lm_{k}+l+1)}\nonumber\\
&\leq(\limsup_{k} k^{1/k}\Psi_{\omega}^{m_{k}}(s,\epsilon)^{1/m_{k}})^{1/(l+1)}<1\nonumber.
\end{align}
A similar argument holds for $\mathbf{\Psi}(s,\epsilon)>1$, finishing the proof.
\end{proof}

For $t<s_{H,\epsilon}$ we can define a random mass distribution on $K_{v,\epsilon}(\omega)$ by constructing a Borel probability measure $\nu$ on the cylinders described by $\mathbf{D}_{\epsilon}^{k}(\omega)$ that satisfies $\nu(U)\leq C\lvert U\rvert^{t}$ for some random, almost surely non-zero, constant $C$.
We start by defining the (diagonal) $k$-prefractal codings of $K_{v,\epsilon}(\omega)$ for the vertex $v$ by 
\[
\mathcal F_{k}^{v}(\omega)=\bigoplus_{w\in V}\left(\bigoplus_{j=1}^{l}(\mathds{1}_{\epsilon_{0}}\mathbf{D}_{\epsilon}^{k}(\omega))_{i}\right)_{v,w}
\]
Since the words of $\mathcal F_{k}^{v}(\omega)$ are in one to one correspondence with the cylinders generating the topology on $K_{v}(\omega)$ it suffices to define our required measure on those (disjoint) cylinders only, as they generate the topology of $K_{v}(\omega)$ and this construction extends to a unique Borel probability measure  $\nu^{s}_{v}$.
For every word $w\in\mathcal F_{k}^{v}(\omega)$ we can describe its `location' relative to $\mathbf{D}_{\epsilon}^{k}(\omega)$ by a unique triple $(x,y,z)$, where $x,y\in V$ and $z\in\{1,\dots,l\}$, such that $w\in [(\mathds{1}_{\epsilon_{0}}\mathbf{D}_{\epsilon}^{k}(\omega))_{z}]_{x,y}$. Let $I$ be an arbitrary word in $\mathcal F_{k}^{v}(\omega)$, with coordinates $(x,y,z)$. 
For any word we define the \emph{location matrix} as 
\[
(\mathds{V}(I))_{i}=\begin{cases}
V(I)	&	\text{ for }i=z,\\
\mathbf{0}_{\varnothing}	&	\text{ otherwise};
\end{cases}
\;\;\;\text{for}\;\;\;
(V(I))_{j,k}=\begin{cases}
I	&	\text{ for }(j,k)=(x,y)\\
\varnothing	&	\text{ otherwise.}
\end{cases}
\]
We set for $I\in\mathcal{F}_{k}^{v}(\omega)$,
\begin{equation}\label{measureDef}
\nu_{v}^{s}(I)=
\lim_{q\to\infty}
\frac{\vertiii{\mathfrak{R}^{s}\left(\mathds{V}(I)\mathbf{C}_{\epsilon}(\sigma^{k}\omega)\mathbf{C}_{\epsilon}(\sigma^{k+1}\omega)\dots\mathbf{C}_{\epsilon}(\sigma^{k+q-1}\omega)\right)}_{(1,1)}}
{\sum_{q_{2}=1}^{n}\left[\sum_{q_{1}=1}^{l}(\mathds{1}_{l}\mathbf{W}_{\epsilon}^{s}(\omega)\mathbf{W}_{\epsilon}^{s}(\sigma\omega)\dots
\mathbf{W}_{\epsilon}^{s}(\sigma^{q-1}\omega) )_{q_{1}}\right]_{v,q_{2}}}.
\end{equation}
One can check that, almost surely, this limit exists. However as one can derive the properties of $\nu_{v}^{s}$ by defining the measure in terms of $\liminf$ or $\limsup$, we omit details.
It is easy to see that $\nu_{v}^{s}$ is in fact a measure. Note that for $I=\varnothing$ we get $\mathfrak{R}^{s}\mathds{V}(I)=\mathbf{0}$ and so $\nu_{v}^{s}(\varnothing)=0$. Obviously $\nu_{v}^{s}(I)\geq0$
and countable stability arises from the construction being an additive set function, where
\[
\nu_{v}^{s}(I)=\lim_{k\to\infty}\left\{\sum \lvert J\rvert^{s} \mid J\in\mathcal{F}_{k}^{v}(\omega)\text{ and }J\subseteq I\right\}.
\]
Formally, for any countable collection of disjoint words (no word is a subword of any other) $\bigoplus w_{i}$ we get, assuming that $w_{i}\in \mathcal{F}_{k_{i}}^{v}(\omega)$ for some length $k_{i}$,
\begin{align}
\sum_{i}\nu_{v}^{s}([w_{i}])&=\sum_{i}\lim_{q\to\infty}\frac{\vertiii{\mathfrak{R}^{s}\left(\mathds{V}(w_{i})\mathbf{C}_{\epsilon}(\sigma^{k_{i}}\omega)\dots\mathbf{C}_{\epsilon}(\sigma^{k_{i}+q-1}\omega)\right)}_{(1,1)}}
{\sum_{q_{2}=1}^{n}\left[\sum_{q_{1}=1}^{l}(\mathds{1}_{l}\mathbf{W}_{\epsilon}^{s}(\omega)\dots
\mathbf{W}_{\epsilon}^{s}(\sigma^{q-1}\omega) )_{q_{1}}\right]_{v,q_{2}}}\nonumber\\
&=\lim_{q\to\infty}\frac{\vertiii{\mathfrak{R}^{s}\left(\bigoplus_{i}\mathds{V}(w_{i})\mathbf{C}_{\epsilon}(\sigma^{k_{i}}\omega)\dots\mathbf{C}_{\epsilon}(\sigma^{k_{i}+q-1}\omega)\right)}_{(1,1)}}
{\sum_{q_{2}=1}^{n}\left[\sum_{q_{1}=1}^{l}(\mathds{1}_{l}\mathbf{W}_{\epsilon}^{s}(\omega)\dots
\mathbf{W}_{\epsilon}^{s}(\sigma^{q-1}\omega) )_{q_{1}}\right]_{v,q_{2}}}\nonumber\\
&=\nu_{v}^{s}\left(\left[\bigoplus_{i}w_{i}\right]\right).\nonumber
\end{align}

Notice that there exists a uniform constant $C>0$ such that
\[
\nu_{v}^{s}(K_{v,\epsilon}(\omega))=\lim_{q\to\infty}
\frac{\vertiii{\mathfrak{R}^{s}\left(\mathds{1}_{l}\mathbf{C}_{\epsilon}(\omega)\mathbf{C}_{\epsilon}(\sigma\omega)\dots\mathbf{C}_{\epsilon}(\sigma^{q-1}\omega)\right)}_{(1,1)}}
{\sum_{q_{2}=1}^{n}\left[\sum_{q_{1}=1}^{l}(\mathds{1}_{l}\mathbf{W}_{\epsilon}^{s}(\omega)\mathbf{W}_{\epsilon}^{s}(\sigma\omega)\dots
\mathbf{W}_{\epsilon}^{s}(\sigma^{q-1}\omega) )_{q_{1}}\right]_{v,q_{2}}}\leq C
\]
and we conclude that $\nu_{v}^{s}$ is a finite measure, and without loss of generality we rescale such that $\nu_{v}^{s}=1$.

We observe that by virtue of the definition of the measure that there exists a random variable $C^{\dagger}(\omega)$ with $\E_{\omega}C^{\dagger}(\omega)<\infty$ such that 
\begin{equation}\label{measureScaling}
\nu_{v}^{s}(I)\leq C C^{\dagger}(\omega)\lvert I\rvert^{s}
\end{equation}
as long as $s<s_{H,\epsilon}$, such that the denominator in~(\ref{measureDef}) is almost surely increasing exponentially in $q$. Note that the existence of a Borel measure satisfying~(\ref{measureScaling}) immediately implies that $s_{H,\epsilon}$ is an almost sure lower bound by the mass distribution principle.\qed

\subsection{Proof of Theorem \ref{dimensionEquality} and Corollary \ref{separatedCorollary}}

\subsubsection{Proof of Theorem \ref{dimensionEquality}}
Let $\lambda>0$, Theorem~\ref{lowerHausdorff} gives us a lower bound on the Hausdorff dimension of the $\lambda$-approximation sets $K_{v,\lambda}(\omega)$. In particular we have that $\dim_{H}K_{v,\lambda}=s_{H,\lambda}$, where
\[
\lim_{k\to\infty}\vertiii{\mathds{1}\mathbf{P}^{s_{H,\lambda}}_{\lambda}(\omega)\dots \mathbf{P}^{s_{H,\lambda}}_{\lambda}(\sigma^{k-1}\omega)}^{1/k}=1.
\]
Consider one of the Hutchinson-Moran sums in the matrix $\mathbf{P}^{s_{H,\lambda}}_{\lambda}(\omega)$. They are given by 
\[
\sum_{e\in(\tensor*[_{i}]{E}{^{q}_{j}}(\omega,\lambda))}c_{\mathbf{e}}^{s_{H,\lambda}}.
\]
But since we have bounds on the size of $c_\mathbf{e}$, i.e.\ $\underline\gamma\lambda<c_\mathbf{e}\leq\lambda$ we have 
\[
\sum_{e\in(\tensor*[_{i}]{E}{^{q}_{j}}(\omega,\lambda))}c_{e}^{s_{H,\lambda}}\leq \lvert\tensor*[_{i}]{E}{^{q}_{j}}(\omega,\lambda)\rvert \lambda^{s_{H,\lambda}}\]
 and so
\[
\mathbf{P}^{s_{H,\lambda}}_{\lambda}(\omega)\leq \lambda^{s_{H,\lambda}}\mathbf{P}^{0}_{\lambda}(\omega).
\]
Considering the matrices $\lambda^{s}\mathbf{P}^{0}_{\lambda}(\omega)$, dependent on $s$, one can apply the same strategy as in Lemma~\ref{almostSurePressureProperties} to prove that there exists a unique $0\leq t_{\lambda}\leq s_{H,\lambda}$ such that 
\[
\lim_{k\to\infty}\vertiii{\mathds{1}\lambda^{t_{\lambda}}\mathbf{P}^{0}_{\lambda}(\omega)\lambda^{t_{\lambda}}\mathbf{P}^{0}_{\lambda}(\sigma\omega)\dots \lambda^{t_{\lambda}}\mathbf{P}^{0}_{\lambda}(\sigma^{k-1}\omega)}^{1/k}=1
\]
We leave adapting the proof of Lemma~\ref{almostSurePressureProperties} to the reader. 
Note that the $t_{\lambda}$ defined above gives an a.s.\ lower bound to $\dim_{H}K_{v,\lambda}(\omega)$.
By linearity, 
\begin{align}
\vertiii{\mathds{1}\lambda^{t_{\lambda}}\mathbf{P}^{0}_{\lambda}(\omega)\lambda^{t_{\lambda}}\mathbf{P}^{0}_{\lambda}(\sigma\omega)\dots \lambda^{t_{\lambda}}\mathbf{P}^{0}_{\lambda}(\sigma^{k-1}\omega)}^{1/k}
&=
\vertiii{\mathds{1}\lambda^{kt_{\lambda}}\mathbf{P}^{0}_{\lambda}(\omega) \dots \mathbf{P}^{0}_{\lambda}(\sigma^{k-1}\omega)}^{1/k}
\nonumber\\
&=
\lambda^{t_{\lambda}}\vertiii{\mathds{1}\mathbf{P}^{0}_{\lambda}(\omega) \dots \mathbf{P}^{0}_{\lambda}(\sigma^{k-1}\omega)}^{1/k}		\nonumber
\end{align}
and so 
\[
t_{\lambda}=\lim_{k\to\infty}\frac{\log\vertiii{\mathds{1}\mathbf{P}^{0}_{\lambda}(\omega) \dots \mathbf{P}^{0}_{\lambda}(\sigma^{k-1}\omega)}^{1/k}}{-\log\lambda}
\]
But since $\lim_{k\to\infty}\vertiii{\mathds{1}\mathbf{P}^{0}_{\lambda}(\omega) \dots \mathbf{P}^{0}_{\lambda}(\sigma^{k-1}\omega)}^{1/k}=\mathbf{\Psi}(0,\lambda)$  we have, comparing with~(\ref{upperBoxEq}), that
 \[
t_{\lambda}=\frac{\log\mathbf{\Psi}(0,\lambda)}{-\log\lambda}
 \]
But $t_{\lambda}\to \dim_{B}K_{v}(\omega)$ as $\lambda\to0$ and so we can, for every $\delta>0$, find a $\lambda$ approximation such that, almost surely,  
\[
\overline\dim_{B}K_{v}-\delta\leq\dim_{H}K_{v,\epsilon}\leq\dim_{H}K_{v}\leq\overline\dim_{B}K_{v}.
\]
Therefore $\dim_{H}K_{v}=\overline \dim_{B}K_{v}$ follows for almost all $\omega\in \Omega$.
\qed

\subsubsection{Proof of Corollary \ref{separatedCorollary}}
If our original graph satisfies the USSC, we can apply Lemma~\ref{approxAreSubsets} and have that $K_{v,\epsilon}(\omega)=K_{v}(\omega)$ for all $\epsilon>0$ and $\omega\in\Omega$.
Therefore $s_{H,1}=s_{H}$ and the almost sure Hausdorff, packing and box counting dimensions are given by the unique $s_{O}$ such that
\[
\lim_{k\to\infty}\vertiii{\mathds{1}\mathbf{P}_{1}^{s_{O}}(\omega)\mathbf{P}_{1}^{s_{O}}(\sigma\omega)\dots\mathbf{P}_{1}^{s_{O}}(\sigma^{k-1}\omega)}^{1/k}=1.
\]
But as $\epsilon$ was chosen to be $1$ we must necessarily have $k_{\max}(1)=1$ and $\mathbf{P}_{1}^{s_{O}}(\omega)$ reduces to 
\[
\mathbf{P}_{1}^{s_{O}}(\omega)=
\begin{pmatrix}
p_{1}^{s_{O}}(\omega,1)	&	\mathbf{0}	& \mathbf{0} &\dots\\
\mathbf{0}				&	p_{1}^{s_{O}}(\sigma\omega,1)	& \mathbf{0} &\dots\\
\mathbf{0}				&	\mathbf{0}	&p_{1}^{s_{O}}(\sigma^{2}\omega,1)&\\
\vdots				&\vdots	&	&\ddots
\end{pmatrix}.
\]
But then 
\[
\vertiii{\mathds{1}\mathbf{P}_{1}^{s_{O}}(\omega)\mathbf{P}_{1}^{s_{O}}(\sigma\omega)\dots\mathbf{P}_{1}^{s_{O}}(\sigma^{k-1}\omega)}
=\lVert p_{1}^{s_{O}}(\omega,1)p_{1}^{s_{O}}(\sigma\omega,1)\dots p_{1}^{s_{O}}(\sigma^{k-1}\omega,1)\rVert_{\text{row}}
\]
giving the required result upon noting that $\lVert.\rVert_{\text{row}}$ and $\lVert.\rVert_{1}$ are equivalent norms.
\qed

\subsection{Proof of Theorem~\ref{AssouadTheo}}
The proof of the lower bound is a relatively simple adaptation of the almost sure lower bound proof due to Fraser, Miao and Troscheit~\cite{Fraser14c}. 
%
%

First note that $\mathfrak{P}(\epsilon)$ (see Definition~\ref{jointSpectralDef}) is well-defined by Lemma~\ref{convergenceLemma} since the Lyapunov exponent with respect to the $\vertiii{.}_{\sup}$ norm exists almost surely. 
To see that the joint spectral radius takes the same value recall that 
\[
\mathds{1}\mathbf{P}_{\epsilon}^{0}(\omega)\dots\mathbf{P}_{\epsilon}^{0}(\sigma^{k-1}\omega)
=\left(\mathbf{P}_{\epsilon}^{0}(\omega)\dots\mathbf{P}_{\epsilon}^{0}(\sigma^{k-1}\omega)\right)_{1}
\]
and in general
\[
\mathds{1}\mathbf{P}_{\epsilon}^{0}(\sigma^{l}\omega)\dots\mathbf{P}_{\epsilon}^{0}(\sigma^{k+l-1}\omega)
=\left(\mathbf{P}_{\epsilon}^{0}(\omega)\dots\mathbf{P}_{\epsilon}^{0}(\sigma^{k-1}\omega)\right)_{l}.
\]
However this implies that for almost every $\zeta\in\Omega$ 
\[
\sup_{\omega\in\Omega}\left\{\vertiii{\mathds{1}\mathbf{P}_{\epsilon}^{0}(\omega)\dots\mathbf{P}_{\epsilon}^{0}(\sigma^{k-1}\omega)}\right\}
=\sup_{l\in\N}\left(\mathbf{P}_{\epsilon}^{0}(\zeta)\dots\mathbf{P}_{\epsilon}^{0}(\sigma^{k-1}\zeta)\right)_{l}.
\]
The equality in~(\ref{spectralEquivalence}) thus follows.
Fix $\epsilon>0$ and let $\xi_{i}\in\Omega$ be such that 
\[
\vertiii{\mathds{1}\mathbf{P}_{\epsilon}^{0}(\xi_{i})\dots\mathbf{P}^{0}_{\epsilon}(\sigma^{i-1}\xi_{i})}=\sup_{\omega\in\Omega}\vertiii{\mathds{1}\mathbf{P}_{\epsilon}^{0}(\omega)\dots\mathbf{P}^{0}_{\epsilon}(\sigma^{i-1}\omega)}.
\]
It is easy to check with a standard Borel-Cantelli argument that the set
\[
G=\{\omega\in\Omega \mid \exists \{j_{i}\}_{i=1}^{\infty}\text{ such that }j_{i+1}\geq j_{i}+i, \; \omega_{j_{i}+k_{i}}=\xi_{i}(k_{i}),\text{ for }1\leq k_{i}\leq i\}
\]
has full measure: all finite words $\xi_{i}$ (in increasing order) are subwords of the infinite word $\omega$ with probability $1$. However this is not the actual set that we have to consider. This is because for every $\xi_{i}$ we also associate a row $v_{i}$ as having the maximal sum that is relevant for the norm. Since we however need a result for every row sum to be maximal we have to consider the family of words $\{\xi_{i}^{v}\}$, where $\xi_{i}^{v}=\omega^{v,v_{i}}\xi_{i}$. However this modification does not change the fact that the modified good set
\begin{multline*}
G^{*}=\bigcap_{v\in V}\{\omega\in\Omega \mid \exists \{j_{i}\}_{i=1}^{\infty}\text{ such that }j_{i+1}\geq j_{i}+i+\lvert\omega^{v,v_{i}}\rvert, \\
 \omega_{j_{i}+k_{i}}=\xi_{i}(k_{i}),\text{ for }1\leq k_{i}\leq i+\lvert\omega^{v,v_{i}}\rvert\}
\end{multline*}
still has full measure.

Now assume for a contradiction that 
\[
s:=\dim_{A}(K_{v}(\omega))<t:=\frac{-\log(\mathfrak{P}(\epsilon))}{\log(\epsilon)}.
\]
Let $\{w_{i}\}$ be any sequence of finite words such that the collection of subcylinders $\mathcal{C}(w_{i})$ of $[w_{i}]$ is given by
\[
\mathcal{C}(w_{i})=w_{i}\odot w^{\tau(w_{i}),v_{i}}\odot\left(\bigoplus_{j}\left[\bigoplus_{l}(\mathds{1}_{\epsilon_{0}}\mathbf{H}^{\epsilon}(\xi_{i})\dots \mathbf{H}^{\epsilon}(\sigma^{i-1}\xi_{i}))_{l}\right]_{v_{i},j}\right),
\]
where $w^{a,b}$ is a connecting word from vertex $a$ to $b$, that exists because $\mathbf{\Gamma}$ is stochastically strongly connected.
This sequence of words exists for all $\omega\in G^{*}$, so almost surely,  and we consider the sequence of similitudes given by the (unique) mapping $S^{-1}_{w_{i}}$ that takes the cylinder $[w_{i}]$ and maps it onto $\Delta$.
Consider furthermore the sequence of sets $Z_{i}=S^{-1}_{w_{i}}(K_{v}(\omega))\cap\Delta$.
Since $S^{-1}_{w_{i}}$ is a bi-Lipschitz map we have $\dim_{A}Z_{i}\leq s$ and so
by definition there exists a constant $C_{i}(s^{*})>0$ such that $\sup_{x\in Z_{i}}N_{r}(B(x,R)\cap Z_{i})\leq C_{i}(s^{*})(R/r)^{s^{*}}$ for all $0<r<R<\infty$ and $s<s^{*}$. Specifically for $s^{*}$ satisfying $s<s^{*}<t$ there exists uniform constant $C$ such that $\sup_{x\in Z_{i}}N_{r}(B(x,R)\cap Z_{i})\leq C(R/r)^{s^{*}}$ and in particular that 
$N_{r}(Z_{i})\leq C^{*}r^{-s^{*}}$
for some $0<C^{*}<\infty$ not depending on $i$, by choosing $R>\lvert\Delta\vert$.
Additionally, it is easy to see that, for some $k_{s}>0$ independent of $i$ and $\epsilon$ (cf.~(\ref{generalSubMult}) and preceding paragraphs) and some $k>0$ related to the difference in length due to the connecting word,
\[
N_{\epsilon^{i}}(Z_{i})\geq kk_{s}^{i} \vertiii{\mathds{1}\mathbf{P}_{\epsilon}^{0}(\xi_{i})\dots\mathbf{P}^{0}_{\epsilon}(\sigma^{i-1}\xi_{i})}.
\]
Thus there exists $C^{**}$ such that
\[
k_{s}^{i} \vertiii{\mathds{1}\mathbf{P}_{\epsilon}^{0}(\xi_{i})\dots\mathbf{P}^{0}_{\epsilon}(\sigma^{i-1}\xi_{i})}\leq C^{**}\epsilon^{-is^{*}},
\]
so
\begin{align*}
s^{*}&\geq
\frac{\log \left[(1/C^{**})k_{s}^{i}\vertiii{\mathds{1}\mathbf{P}_{\epsilon}^{0}(\xi_{i})\dots\mathbf{P}^{0}_{\epsilon}(\sigma^{i-1}\xi_{i})}\right]}{-i\log\epsilon}\\
&\geq
\frac{\log \left((1/C^{**})^{1/i}k_{s}\vertiii{\mathds{1}\mathbf{P}_{\epsilon}^{0}(\xi_{i})\dots\mathbf{P}^{0}_{\epsilon}(\sigma^{i-1}\xi_{i})}^{1/i}\right)}{-\log\epsilon}
\end{align*}
for all $i$.
However the term on the right converges to $t-\log(k_{s})/\log(\epsilon)$ as $i\to\infty$. Since $\epsilon$ was arbitrary, letting $\epsilon\to0$ we have the required contradiction that $t\leq s^{*}<t$.

\vspace{0.8cm}
To prove the upper bound note that since we are assuming the USSC,  the $\epsilon$ approximation sets $K_{v,\epsilon}(\omega)$ are all equal to the attractor $K_{v}(\omega)$ by Lemma~\ref{approxAreSubsets}.
We first show that for any $z\in\R^{d}$ the number of sets of diameter comparable to $\epsilon>0$ intersecting the ball $B(z,\epsilon)$ is uniformly bounded. Let $\Xi^{*}=\{x_{i}\}$ be the set of words in $\mathds{1}_{\epsilon_{0}}\mathbf{H}^{\epsilon}(\omega)$ whose image $S_{x_{i}}(\Delta)$ intersects $B(z,\epsilon)$. Let $c_{\min}>0$ be the least contraction rate. We have
\[
\lvert\Xi^{*}\rvert (\epsilon c_{\min})^{d}=\sum_{x\in\Xi^{*}}(\epsilon c_{\min})^{d}\leq \sum_{x\in\Xi^{*}}\lvert S_{x}(\Delta)\rvert^{d}\leq \lvert B(z,2\epsilon)\rvert^{d}\leq (4\epsilon)^{d};
\]
thus $\lvert\Xi^{*}\rvert\leq (4/c_{\min})^{d}$ is bounded. 

Now let $r$ be such that $0<r<\epsilon$ and define $k_{r}$ to be the unique integer such that $\epsilon^{k_{r}+1}< r\leq \epsilon^{k_{r}}$. For each $x\in\Xi^{*}$ the number of $r$-balls needed to cover $S_{x}(\Delta)\cap K_{v}(\omega)$ is however
bounded by $\sum_{i=1}^{n}(\lVert\mathds{1}\mathbf{P}^{0}_{\epsilon}(\xi_{k_{r}})\dots \mathbf{P}^{0}_{\epsilon}(\sigma^{k_{r}}\xi_{k_{r}})\rVert_{s})_{v,i}$, the maximal way of covering the cylinder with cylinders of diameter $\epsilon^{k_{r}+1}$ or less. Hence
\begin{align}
\sup_{x\in K_{v}(\omega)}N_{r}(B(x,\epsilon)\cap K_{v}(\omega))&\leq \lvert\Xi^{*}\rvert\sum_{i=1}^{n}(\lVert\mathds{1}\mathbf{P}^{0}_{\epsilon}(\xi_{k_{r}})\dots \mathbf{P}^{0}_{\epsilon}(\sigma^{k_{r}}\xi_{k_{r}})\rVert_{s})_{v,i}\nonumber\\
&\leq   \lvert\Xi^{*}\rvert \vertiii{\mathds{1}\mathbf{P}^{0}_{\epsilon}(\xi_{k_{r}})\dots \mathbf{P}^{0}_{\epsilon}(\sigma^{k_{r}}\xi_{k_{r}})}\nonumber\\
&\leq C (\epsilon^{k_{r}+1})^{-(s+\delta)}\leq C \left(\frac{\epsilon}{r}\right)^{s+\delta}\nonumber
\end{align}
for some constant $C>0$ for each $\delta>0$ giving the required upper bound to the Assouad dimension.

\subsection{Proof of Theorem~\ref{infiniteSameDimension}}
Although we will not prove it here, there exists a nice expression for the Hausdorff dimension of the random attractor.
\begin{lma}\label{infiniteHausDim}
	Assume $\mathbf{\Gamma}$ satisfies the USSC, then almost surely, conditioned on $F_{v}$ being non-empty, $\dim_{H}F_{v}=s_{h}$, where $s_{h}$ is the unique non-negative real satisfying
	\begin{equation}\label{inftyHD}
	\rho\left[\E\left( 
	\mathfrak{R}^{s_{h}}
	\begin{pmatrix}
	\bigsqcup_{e\in\tensor*[_{1}]{E}{_{1}}(\omega_1)}e	&	\dots		&	\bigsqcup_{e\in\tensor*[_{1}]{E}{_{n}}(\omega_1)}e \\
	\vdots		&		\ddots		&\vdots		\\
	\bigsqcup_{e\in\tensor*[_{n}]{E}{_{1}}(\omega_1)}e	&	\dots		&	\bigsqcup_{e\in\tensor*[_{n}]{E}{_{n}}(\omega_1)}e
	\end{pmatrix}
	\right)\right]=1.
	\end{equation}
	Here $\rho$ refers to the spectral radius of a matrix.
\end{lma}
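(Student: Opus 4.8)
The plan is to treat \eqref{inftyHD} as the criticality equation of a multi-type branching process and to prove the lemma by matching a first-moment covering bound with a mass-distribution lower bound, in the spirit of the classical random recursive results of Falconer, Mauldin--Williams and Graf, adapted to the graph-directed and randomly-chosen-graph setting. First I would set up the structural dictionary: writing $M(\omega_1)$ for the random non-negative matrix $\mathfrak{R}^{s}(\cdots)$ appearing in \eqref{inftyHD}, so $(M(\omega_1))_{u,w}=\sum_{e\in\tensor*[_u]{E}{_w}(\omega_1)}c_e^{s}$, and setting $A(s)=\E M$, the point is that under the USSC and Lemma~\ref{sameInfiniteProcess} the level-$k$ cylinders of $F_v$ are in bijection with the words of $\mathbf{F}_v^{k}$, with genuinely non-overlapping images $S_{\mathbf{w}}(\Delta)$. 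Since all edge maps are strict contractions with ratios bounded away from $0$ and $1$, the comparison estimates in the proof of Lemma~\ref{almostSurePressureProperties} show $A(s)$ is entrywise strictly decreasing in $s$; stochastic strong connectivity (Condition~\ref{strongConnected}) makes $A(s)$ irreducible, so Perron--Frobenius gives that $\rho(A(s))$ is continuous and strictly decreasing, with $\rho(A(0))>1$ by the survival assumption in Definition~\ref{replacementCondition} and $\rho(A(s))\to0$ as $s\to\infty$, yielding the existence and uniqueness of $s_h$.

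Next I would prove $\dim_{H}F_v\le s_h$. Fix $s>s_h$, so $\rho(A(s))<1$. Because the graph assigned to each node is independent and identically distributed, an induction on $k$ gives $\E\big(\sum_{\mathbf{w}\in\mathbf{F}_v^{k}}c_{\mathbf{w}}^{s}\big)=\big((A(s))^k\mathbf{1}\big)_v\le C\rho(A(s))^k\to0$. As $\{S_{\mathbf{w}}(\Delta):\mathbf{w}\in\mathbf{F}_v^{k}\}$ covers $F_v$ by sets of diameter tending to $0$, this bounds $\E\,\Haus^{s}_{\delta_k}(F_v)$, and Fatou (or a Borel--Cantelli argument along a subsequence) forces $\Haus^{s}(F_v)=0$ almost surely; letting $s\downarrow s_h$ along a countable sequence gives $\dim_{H}F_v\le s_h$ a.s.

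For the matching lower bound I would construct a random mass distribution on $F_v$. Let $\mathbf{v}$ be the positive Perron eigenvector of $A(s_h)$ (eigenvalue $1$) and define, for each $u\in V$ and $k\ge0$, $W_k^{u}=\sum_{\mathbf{w}\in\mathbf{F}_u^{k}}c_{\mathbf{w}}^{s_h}(\mathbf{v})_{\tau(\mathbf{w})}$. Conditioning on the root graph and on the independent subtrees shows $(W_k^{u})_k$ is a non-negative martingale, and since there are finitely many finite graphs the offspring distributions are bounded, so the multi-type Kesten--Stigum $L\log L$ condition is automatic and $W_k^{u}\to W^{u}$ in $L^1$ with $W^{u}>0$ a.s.\ on the survival event $\{F_u\ne\varnothing\}$. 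Using these limits as boundary weights (assigning $[\mathbf{w}]$ a mass proportional to $c_{\mathbf{w}}^{s_h}(\mathbf{v})_{\tau(\mathbf{w})}$ times the limit variable of the subtree rooted at $\mathbf{w}$) defines a Borel probability measure $\nu$ on $F_v$ with $\nu([\mathbf{w}])\asymp c_{\mathbf{w}}^{s_h}$ up to a.s.-finite positive random factors; the USSC ensures a ball $B(x,r)$ meets only boundedly many cylinders of diameter comparable to $r$, so $\nu(B(x,r))\le C(x)r^{s_h}$ for $\nu$-a.e.\ $x$, and the mass distribution principle gives $\dim_{H}F_v\ge s_h$ a.s.\ conditioned on $F_v\ne\varnothing$.

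The hard part will be the non-degeneracy of the martingale limits $W^{u}$, i.e.\ showing that the lower-bound measure is almost surely non-trivial exactly on the non-extinction event. This requires the multi-type Kesten--Stigum theorem together with the verification that the mean matrix $A(s_h)$ is irreducible (hence has a strictly positive Perron eigenvector) and that the survival events for the different vertices coincide almost surely; both follow from stochastic strong connectivity via a short coupling argument, but they are the places where care is genuinely needed. A secondary nuisance is the boundary-separation step in the lower bound: one must check, using the USSC, that comparable-diameter cylinders meeting a small ball are uniformly bounded in number even though in the $\infty$-variable model cylinders at a fixed level can have very different sizes, which is handled exactly as in the $\epsilon$-stopping-graph estimates of Theorem~\ref{infiniteAssouadDimension}.
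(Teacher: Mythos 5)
Your proposal follows exactly the route the paper itself indicates: the paper does not prove Lemma~\ref{infiniteHausDim} but states that it follows by ``rewriting the Hutchinson--Moran sum of the $k$th level as a martingale'' and copying the strategy of Theorem~15.1 of Falconer's \emph{Fractal Geometry} (cf.\ Olsen's book). Your mean-matrix/Perron--Frobenius setup, the first-moment upper bound, and the Perron-eigenvector-weighted martingale $W_k^{u}$ with the multi-type Kesten--Stigum non-degeneracy argument are precisely that programme, and the existence/uniqueness of $s_h$ and the upper bound are fine as written.

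There is, however, one step that fails as literally stated: the claim that the limit measure satisfies $\nu(B(x,r))\leq C(x)\,r^{s_h}$ for $\nu$-a.e.\ $x$. By the mass distribution principle this would force $\Haus^{s_h}(F_v)>0$ on the survival event, whereas for genuinely random recursive constructions (which is the situation here, by the non-triviality requirement that two edge maps differ) Graf, Mauldin and Williams showed $\Haus^{s_h}(F_v)=0$ almost surely; the correct gauge carries a $\log\log$ correction. The problem in your argument is the phrase ``up to a.s.-finite positive random factors'': the factors are the subtree martingale limits $W^{(\mathbf{w})}$, and $\sup_{\mathbf{w}}W^{(\mathbf{w})}=\infty$ almost surely, so they cannot be absorbed into a constant at the critical exponent. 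The standard repair, which costs nothing for the dimension statement, is to run the lower bound for each fixed $s<s_h$: since the offspring numbers are bounded, $W^{(\mathbf{w})}$ has all moments, and a Borel--Cantelli estimate gives $\sup_{\mathbf{w}} c_{\mathbf{w}}^{\,s_h-s}W^{(\mathbf{w})}<\infty$ a.s., whence $\nu([\mathbf{w}])\leq C(\omega)c_{\mathbf{w}}^{\,s}$ and $\nu(B(x,r))\leq C'r^{s}$ after the USSC ball-counting step; alternatively one uses the energy estimate $\E\int\!\int|x-y|^{-s}\,d\nu\,d\nu<\infty$ for $s<s_h$ exactly as in Falconer's Theorem~15.1. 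With that modification your proof is complete and coincides with the argument the paper delegates to the literature.
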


Briefly, this can be shown by rewriting the Hutchinson-Moran sum of the $k$th level as a martingale and a proof strategy almost identical to that of Theorem 15.1 in Falconer~\cite{FractalGeo}. Compare also with the results in the introduction of Olsen~\cite{Olsen94}.

Let $\mathbf{K}^{\epsilon}(q)$ be the matrix of words that corresponds to the graph  $\Gamma^{\epsilon}(q)\in{\Gamma}^{\epsilon}$.
Since by Lemma~\ref{sameInfiniteProcess} the attractor $F_{v}^{\epsilon}$ of the approximation is again an $\infty$-variable RGDS which furthermore satisfies the USSC, we can apply Theorem~\ref{infiniteHausDim} and get that $\dim_{H}F_{v}^{\epsilon}=s_{h,\epsilon}$, where
\[
\rho\left[\E_{q\in\mathcal{Q}}\left(\mathfrak{R}^{s_{h,\epsilon}}\mathbf{K}^{\epsilon}(q)\right)\right]=
\lim_{k\to\infty}\lVert\left[\E\mathfrak{R}^{s_{h,\epsilon}}(\mathbf{K}^{\epsilon}(q))\right]^{k}\rVert^{1/k}=1.
\]
The second equality holds by Gelfand's Theorem for any suitable matrix norm, such as $\vertiii{.}_{\sup}$, see for example Arveson~\cite[Theorem 1.7.3]{ArvesonShortCourseSpectralTheory}.
It can be shown that this expectation is a decreasing, continuous function in $s_{h,\epsilon}$ and there is a unique value such that the expectation is equal to $1$. The proof is almost identical to that of Lemma~\ref{almostSurePressureProperties} and we will omit it here. Now as $F_{v}^{\epsilon}\subseteq F_{v}$ we have that $s_{h,\epsilon}\leq s_{h}$, where $s_{h}=\dim_{H}F_{v}$.
We therefore conclude that 
\[
\lim_{k\to\infty}\lVert \left[\E\mathfrak{R}^{s_{h}}(\mathbf{K}^{\epsilon}(q))\right]^{k} \rVert^{1/k}\leq 1.
\]
By an argument similar to that of Theorem~\ref{dimensionEquality}, noting that the diameters of the images are comparable to $\epsilon$, we get
\[
\lim_{k\to\infty}\epsilon^{s_{h}}\lVert \left[\E\mathfrak{R}^{0}(\mathbf{K}^{\epsilon}(q))\right]^{k} \rVert^{1/k}=\epsilon^{s_{h}}\rho\E(\mathfrak{R}^{0}(\mathbf{K}^{\epsilon}(q)))\leq 1,
\]
and as $N_{\epsilon}(F_{v})\asymp \sum_{u\in V}(\mathfrak{R}^{0}(\mathbf{K}^{\epsilon}))_{v,u}$ we have $\E N_{\epsilon}(F_{v})\leq C \epsilon^{-s_{h}}$.
Let $\zeta,\theta>0$ and consider
\[
\sum_{\substack{\delta=\zeta^{k}\\k\in\N}}\Prob\{N_{\delta}(F_{v})\geq \delta^{-(s_{h}+\theta)}\}
\leq\sum_{\substack{\delta=\zeta^{k}\\k\in\N}}\frac{\E N_{\delta}(F_{v})}{\delta^{-(s_{h}+\theta)}}
\leq C\sum_{\substack{\delta=\zeta^{k}\\k\in\N}}\frac{\delta^{-s_{h}}}{\delta^{-s_{h}}\delta^{-\theta}}
\leq C\sum_{k\in\N}\zeta^{k\theta}<\infty.
\]
Now noting that for all $k$ we have $N_{\zeta^{k}}(F_{v})\asymp N_{\zeta^{k+1}}(F_{v})$ so by the Borel-Cantelli Lemma with probability $0$ the event $N_{\delta}(F_{v})\geq \delta^{-(s_{h}+\theta)}$ happens infinitely often and therefore, almost surely,
\[
\limsup_{\delta\to0}\frac{\log N_{\delta}(F_{v})}{-\log\delta}\leq\limsup_{\delta\to 0}\frac{\log\delta^{-(s_{h}+\theta)}}{-\log\delta}=s_{h}+\theta.
\]
But $\theta>0$ was arbitrary, so almost surely $\dim_{B}F_{v}=\dim_{H}F_{v}$, as required.\qed

\subsection{Proof of Theorem~\ref{infiniteAssouadDimension}}
The proof of Theorem~\ref{infiniteAssouadDimension} is very similar to that of Theorem~\ref{AssouadTheo} and we  only highlight the differences and sketch the rest of the proof.
Let $\overline{\mathbf{K}}^{\epsilon}=\mathbf{K}^{\epsilon}(q_{\max})$, where $q_{\max}$ is such that, 
\[
\vertiii{\mathfrak{R}^{0}\mathbf{K}^{\epsilon}(q_{\max})}_{\sup}=\max_{q\in\mathcal{Q}}\vertiii{\mathfrak{R}^{0}\mathbf{K}^{\epsilon}(q)}_{\sup}.
\] 
Furthermore let $\mathcal{R}^{\epsilon}$ be the arrangements of words in the row of $\overline{\mathbf{K}}^{\epsilon}$ that is maximal with respect to the row norm.
Given any finite word $w$ we can therefore construct a maximal $k$-subtree by appending the letters from $\mathcal{R}^{\epsilon}$ to $w$, if necessary by connecting them with a connecting word which is bounded in length $l$. Therefore we can construct a subtree of level $k+l$ such that, for some uniform constant $C>0$, 
\[
N_{\epsilon^{k}}(S^{-1}_{w}(\Delta))\geq C\vertiii{\mathfrak{R}^{0}\underbrace{\overline{\mathbf{K}}^{\epsilon}\dots\,\overline{\mathbf{K}}^{\epsilon}}_{\text{$k$ times}}}_{\sup}.
\]
Noticing that by Gelfand's theorem, 
\[
\vertiii{\mathfrak{R}^{0}\underbrace{\overline{\mathbf{K}}^{\epsilon}\dots\,\overline{\mathbf{K}}^{\epsilon}}_{\text{$k$ times}}}_{\sup}^{1/k}
\to \rho(\mathfrak{R}^{0}\overline{\mathbf{K}}^{\epsilon})\;\;\;\text{ as }\;\;\;k\to\infty,
\]
and that for every $k$ we can find a sequence of words $\{w_{i}\}$ that has this maximal $i+l$ subtree splitting for almost every realisation $q\in\mathcal{Q}$, we can apply the same argument as in Theorem~\ref{AssouadTheo} to conclude that almost surely
\[
\dim_{A}F_{v}\geq\sup_{\epsilon>0}\frac{\log\rho(\mathfrak{R}^{0}\overline{\mathbf{K}}^{\epsilon})}{-\log \epsilon}.
\]
Assuming the USSC the upper bound follows immediately as $\rho(\mathfrak{R}^{0}\overline{\mathbf{K}}^{\epsilon})$ is by definition the largest eigenvalue and hence greatest rate of expansion. The argument is identical to Theorem~\ref{AssouadTheo} and is left to the reader.
\qed

\end{document}